\documentclass[a4paper, reqno, 11pt]{amsart}

\usepackage[english]{babel}
\usepackage{amsmath}
\usepackage{mathrsfs}
\usepackage{amssymb}
\usepackage{amsthm}
\usepackage{enumerate}
\usepackage{ifthen}
\usepackage{bbm}
\usepackage{color}
\usepackage{xcolor}
\usepackage{graphicx}
\provideboolean{shownotes} 
\setboolean{shownotes}{true}
\usepackage{hyperref}

\newcommand{\margnote}[1]{
\ifthenelse{\boolean{shownotes}}%
{\marginpar{\raggedright\tiny\texttt{#1}}}%
{}%
}

\newcommand{\hole}[1]{
\ifthenelse{\boolean{shownotes}}%
{\begin{center} \fbox{ \rule {.25cm}{0cm}
\rule[-.1cm]{0cm}{.4cm} \parbox{.85\textwidth}{\begin{center}
\texttt{#1}\end{center}} \rule {.25cm}{0cm}}\end{center}}
{}
}
\newtheorem{thm}{Theorem}[section]

\newtheorem{prop}[thm]{Proposition}
\newtheorem{lem}[thm]{Lemma}

\newtheorem{rem}[thm]{Remark}

\theoremstyle{definition}
\newtheorem{defn}[thm]{Definition}

\newcommand{\e}{\varepsilon}		       
\newcommand{\R}{\mathbb{R}}
\newcommand{\T}{\mathbb{T}^2}
\newcommand{\N}{\mathbb{N}}
\newcommand{\Z}{\mathbb{Z}}
\newcommand{\dive}{\mathop{\mathrm {div}}}
\newcommand{\curl}{\mathop{\mathrm {curl}}}

\newcommand{\weaktos}{\stackrel{*}{\rightharpoonup}}
\newcommand{\de}{\,\mathrm{d}}
\DeclareMathOperator*{\esssup}{ess\,sup}

\numberwithin{equation}{section}

\subjclass[2010]{Primary: 35Q35, Secondary: 35Q31.}
\keywords{2D Euler equations; vortex methods; Lagrangian solutions; Conservation of Energy.}

\begin{document}

\title[On weak solutions obtained via the vortex method]{Weak solutions obtained by the vortex method for the 2D Euler equations are Lagrangian and conserve the energy}

\author[G. Ciampa]{Gennaro Ciampa}
\address[G. Ciampa]{GSSI - Gran Sasso Science Institute\\ Viale Francesco Crispi 7 \\67100 L'Aquila \\Italy \& Department Mathematik Und Informatik\\ Universit\"at Basel \\Spiegelgasse 1 \\CH-4051 Basel \\ Switzerland }
\email[]{\href{gennaro.ciampa@}{gennaro.ciampa@gssi.it}}

\author[G. Crippa]{Gianluca Crippa}
\address[G. Crippa]{Department Mathematik Und Informatik\\ Universit\"at Basel \\Spiegelgasse 1 \\CH-4051 Basel \\ Switzerland}
\email[]{\href{gianluca.crippa@}{gianluca.crippa@unibas.ch}}

\author[S. Spirito]{Stefano Spirito}
\address[S. Spirito]{DISIM - Dipartimento di Ingegneria e Scienze dell'Informazione e Matematica\\ Universit\`a  degli Studi dell'Aquila \\Via Vetoio \\ 67100 L'Aquila \\ Italy}
\email[]{\href{stefano.spirito@}{stefano.spirito@univaq.it}}

\begin{abstract}
We discuss the Lagrangian property and the conservation of the kinetic energy for solutions of the 2D incompressible Euler equations. Existence of Lagrangian solutions is known when the initial vorticity is in $L^p$ with $1\leq p\leq \infty$. Moreover, if $p\geq 3/2$ \emph{all} weak solutions are conservative. In this work we prove that solutions obtained via the vortex method are Lagrangian, and that they are conservative if $p>1$.
\end{abstract}

\maketitle

\section{Introduction}
The two-dimensional Euler equations
\begin{equation}\label{eq:eu}
\begin{cases}
\partial_t v+(v\cdot\nabla)v +\nabla p =0, \\
\dive v =0,\\
v(0,\cdot) =v_0.
\end{cases}
\end{equation}
model the motion of an incompressible inviscid fluid. The unknowns are the velocity field $v:[0,T]\times\R^2\to\R^2$ and the scalar pressure $p:[0,T]\times\R^2\to\R$. In two dimensions, a very special role is played by the vorticity, which is defined as 
\begin{equation}\label{def:vort}
\omega:=\curl\,v=\partial_{x_1}v_2-\partial_{x_2}v_1.
\end{equation}
Note that the vorticity is a scalar quantity and that system \eqref{eq:eu} can be rewritten in terms of $\omega$ as
\begin{equation}\label{eq:vort}
\begin{cases}
\partial_t\omega+v\cdot\nabla\omega =0,\\
v=K*\omega,\\
\omega(0,\cdot)=\omega_0,
\end{cases}
\end{equation}
where $\omega_0=\curl v_0$ and $K$ is defined as
$$
K(x)=\frac{1}{2\pi}\frac{x^{\perp}}{|x|^2}=\frac{1}{2\pi}\frac{(-x_2,x_1)}{|x|^2}.
$$
The coupling between the velocity and the vorticity given by the formula
$$
v= K*\omega
$$
is known as Biot-Savart law and it is an alternative way to express \eqref{def:vort}.

Existence and uniqueness of classical solutions of \eqref{eq:eu} is very well-known for smooth initial data and was proved first locally in time in \cite{L} and then globally in time in \cite{W}. Smooth solutions enjoy two very natural properties: the first one is that they are \emph{Lagrangian}, namely they solve the equivalent formulation of \eqref{eq:eu} given by the following system of O.D.E. 
\begin{equation}\label{eq:lagra}
\begin{cases}
\dot{X}(t,x)=v(t,X(t,x)),\\
v(t,x)=(K*\omega)(t,x),\\
\omega(t,x)=\omega^0(X^{-1}(t,\cdot)(x)),\\
X(0,x)=x,
\end{cases}
\hspace{0.7cm}
\mbox{for }t\in [0,T] \mbox{ and }x\in\R^2.
\end{equation}
The second property is that smooth solutions conserve the \emph{kinetic energy}, namely 
\begin{equation}\label{eq:cons}
\|v(t)\|_{L^2}=\|v^0\|_{L^2} \hspace{0.5cm}\mbox{ for any }t\in [0,T].
\end{equation}
When we consider solutions in weaker classes it is not clear whether they satisfy \eqref{eq:lagra} or \eqref{eq:cons}. The goal of this paper is to prove these properties for weak solutions with $L^p$ vorticity control constructed by the {\em vortex-blob approximation}. In order to clarify how this result fits in the theory of weak solutions of the two-dimensional Euler equations we give a brief overview on the state of the art for this topic.\\
\medskip
\\
In their seminal paper \cite{DPM}, DiPerna and Majda prove the existence of measure-valued solutions of \eqref{eq:eu} under the assumption of vortex-sheet initial vorticity, that is $\omega_0\in\mathcal{M}\cap H^{-1}_{\mathrm{loc}}(\R^2)$. Precisely,  they give the definition of an approximate solution sequence of the two-dimensional incompressible Euler equations and they show that this kind of approximate solutions converge to measure-valued solutions. Moreover, they give three different examples of approximation methods that satisfy their definition:
\begin{itemize}
\item[(ES)] Approximation by exact smooth solutions of \eqref{eq:eu};
\item[(VV)] Vanishing viscosity from the two-dimensional Navier-Stokes equations;
\item[(VB)] Vortex blob approximation. 
\end{itemize} 
For initial vorticities $\omega_0\in L^1\cap L^p(\R^2)$ with $1<p\leq\infty$ they proved global existence of weak solutions of \eqref{eq:eu} obtained trough the methods (ES) and (VV), while for weak solutions constructed by (VB) the same result was obtained by Beale in \cite{Be}. Note that uniqueness of weak solutions in the class considered in \cite{DPM} is still an open problem, contrary to the case $p=\infty$ in which uniqueness has been proved by Yudovich \cite{Y}.\\
\medskip
\\
Concerning the Lagrangian property \eqref{eq:lagra}, in \cite{FMN} it has been observed that when $\omega_0\in L^p(\R^2)$, with $p\geq 2$, any weak solution of the Euler equations in vorticity form \eqref{eq:vort} is renormalized in the sense of  DiPerna and Lions \cite{DPL} and admits a representation formula in terms of the flow of the velocity as in \eqref{eq:lagra}. Moreover, when $\omega_0\in L^p(\R^2)$ with $1<p<2$, all solutions obtained as limit of (ES) are Lagrangian as a consequence of the stability theorem in \cite{DPL}. The case of weak solutions produced by (ES) with $L^1$-initial vorticity is considered in \cite{BBC2}.\\
Regarding the vanishing viscosity limit, in \cite{CS} it has been proved that solutions $\omega\in L^\infty((0,T);L^p(\R^2))$ obtained via (VV) are Lagrangian if $1< p<2$, while the case $p=1$ is considered in \cite{CNSS}. Note that the Lagrangian property is non-trivial even at the linear level for the transport equation
$$
\begin{cases}
\partial_t u+b\cdot\nabla u=0,\\
u(0,\cdot)=u_0.
\end{cases}
$$
In fact, in \cite{MS, MS2, MS3} the authors show via convex-integration techniques that there exist solutions of the linear transport equation which are not Lagrangian, if the integrability of $\nabla b$ and of $u$ are much below the threshold provided by the DiPerna-Lions' theory \cite{DPL}. In particular, for the 2D Euler equations we are in the situation described in \cite{MS3} when we assume low integrability conditions on the initial vorticity, namely $\omega_0\in L^p(\R^2)$ with $1\leq p<4/3$. \\
\medskip
\\
Regarding solutions that preserve the kinetic energy, in \cite{CFLS} the authors consider \eqref{eq:eu} on the two-dimensional flat torus $\T$ and prove that \emph{all} weak solutions $v\in C_{weak}([0,T];L^2(\T))$ satisfy the energy conservation \eqref{eq:cons} if the vorticity $\omega\in L^\infty((0,T);L^p(\T))$ with $p\geq 3/2$. The proof is based on a mollification argument and the exponent $p=3/2$ is required in order to have weak continuity of a commutator term in the energy balance. The authors also give an example of the sharpness of the exponent $p=3/2$ in their argument. Moreover, they show that if $\omega\in L^\infty((0,T);L^p(\T))$, with $1<p<3/2$, solutions constructed by (ES) and (VV) conserve the energy.\\
\medskip
\\
In this paper we consider weak solutions obtained by the vortex-blob approximation (VB). We refer to Section 3 for the precise description of the vortex-blob method, which is the prototype of several important numerical schemes and is based on the idea of approximating the vorticity with a finite number of cores which evolve according to the velocity of the fluid.\\
We introduce the following definition.
\begin{defn}\label{def:VB}
Let $v\in L^\infty((0,T);L^2_{\mathrm{loc}}(\R^2))$ and $v_0\in L^2_{\mathrm{loc}}(\R^2)$. We say that $v$ is a \emph{VB-solution} of the 2D incompressible Euler equations with initial datum $v_0$ if
\begin{itemize}
\item $v$ is a weak solution of \eqref{eq:eu}
\item there exists an approximate sequence $v^\e$ constructed with the vortex-blob method such that, as $\e\to 0$ along a subsequence,
\begin{align*}
&v^\e\weaktos v \hspace{1.8cm}\mbox{in }L^\infty((0,T);L^2_{\mathrm{loc}}(\R^2)),\\
&v^\e(0,\cdot)\to v_0 \hspace{0.9cm}\mbox{in }L^2_{\mathrm{loc}}(\R^2).
\end{align*}
\end{itemize}
\end{defn}
Our main results concern the Lagrangian property and the conservation of the kinetic energy of VB-solutions. These results are contained respectively in Theorem \ref{teo:main1} and Theorem \ref{teo:main2}.\\
In order to prove that VB-solutions are Lagrangian we will not rely on a duality argument, as done in \cite{CS}. We will prove a new, to the best of our knowledge, estimate on the $L^p$ distance between the approximate vorticity obtained by vortex-blob approximation and the solution of a linear transport equation where the advecting term is the approximate velocity field obtained by the vortex-blob approximation. Moreover, we will prove the equi-integrability of the sequence of approximate vorticity constructed via (VB) and exploit the stability theorems for Lagrangian solutions of the linear transport equation contained in \cite{BC, CDL}. In particular, the equi-integrability of the approximate vorticity will also allow us to improve the existence result of Beale in \cite{Be} to the case of initial vorticity $\omega_0\in L^1\cap H^{-1}_{\mathrm{loc}}(\R^2)$.\\
In the proof of the conservation of the energy (Theorem \ref{teo:main2}), we will use a modified version of the \emph{Serfati identity} \cite{AKFL, S} in order to prove the global convergence in $L^2$ of the approximate  velocity together with a precise blow-up estimate for the velocity. We will also prove a local balance of the energy for VB-solutions when $\omega_0\in L^p(\R^2)$ with $p\geq 6/5$.\\
It is worth to notice that, even if the vortex-blob is a numerical scheme that does not come from physical considerations, it provides solutions that are Lagrangian and conservative, two important physical properties. We think that it is an interesting problem to investigate whether in general there is any implication between Lagrangian and conservative solutions.

\subsection*{Organization of the paper}
The paper is divided as follows. In Section~2 we fix the notations and we recall some results about the linear transport equation. In Section 3 we describe the vortex-blob approximation and we prove some preliminary estimates from \cite{Be}; then we prove the equi-integrability of the approximate vorticity and the extension of Beale's result to the case of $\omega_0\in L^1\cap H^{-1}_{\mathrm{loc}}(\R^2)$. In Section 4 we prove that VB-solutions are Lagrangian and in Section 5 that they are conservative.

\bigskip

\section{Notations and Preliminaries}
This section is divided in two subsections: in the first one we fix the notations used in the sequel, while in the second one we recall the definitions of distributional, Lagrangian and renormalized solutions to the transport equation. We focus our attention on the case when the vector field is divergence-free, but all definitions and results can be extended to the case of bounded divergence with suitable changes.
\subsection{Notations}
We will denote by $L^p(\R^n)$ the standard Lebesgue spaces and with $\|\cdot\|_{L^p}$ their norm. We will use the notation $\|\cdot\|_{L^p(A)}$ when the norm is computed on a subset $A\subset\R^n$. Moreover, $L^p_c(\R^n)$ denotes the space of $L^p$ functions defined on $\R^n$ with compact support. The Sobolev space of $L^p$ functions with distributional derivatives of first order in $L^p$ is denoted by $W^{1,p}(\R^n)$. The spaces $L^p_{\mathrm{loc}}(\R^n),W^{1,p}_{\mathrm{loc}}(\R^n)$ denote the space of functions which are locally in $L^p(\R^n),W^{1,p}(\R^n)$ respectively. We will denote by $H^1(\R^n)$ the space $W^{1,2}(\R^n)$ and by $H^{-1}(\R^n)$ its dual space. Moreover, we will say that a function $u$ is in $H^{-1}_{\mathrm{loc}}(\R^n)$ if $\rho u\in H^{-1}(\R^n)$ for every function $\rho\in C^\infty_c(\R^n)$.
We also denote by $\mathcal{M}(\R^n)$ the space of finite Radon measures on $\R^n$. We denote by $L^p((0,T);L^q(\R^n))$ the space of all measurable functions $u$ defined on $[0,T]\times\R^n$ such that
$$
\|u\|_{L^p((0,T);L^q(\R^n))}:=\left(  \int_0^T\|u(t,\cdot)\|^p_{L^q} \de t\right)^{\frac{1}{p}}<\infty,
$$
for all $1\leq p<\infty$, and
$$
\|u\|_{L^\infty((0,T);L^q(\R^n))}:=\esssup_{t\in [0,T]}\|u(t,\cdot)\|_{L^q}<\infty,
$$
and analogously for the spaces $L^p((0,T);W^{1,q}(\R^n))$.
We denote by $B_R$ the ball of radius $R>0$ and center the origin in $\R^n$, by $\mathscr{L}^n$ the standard Lebesgue measure in $\R^n$, and for  $f:\R^n\to\R^n$ we consider the push-forward measure of $\mathscr{L}^n$ defined by
$$
f_\#\mathscr{L}^n(A)=\mathscr{L}^n(f^{-1}(A)),\hspace{0.7cm}\mbox{for all Borel sets }A\subseteq\R^n.
$$
Finally, it is useful to denote with $\star$ the following variant of the convolution
\begin{align*}
v\star w&=\sum_{i=1}^2 v_i*w_i \hspace{1cm}\mbox{if }v,w\mbox{ are vector fields in }\R^2,\\
A\star B&=\sum_{i,j=1}^2A_{ij}*B_{ij} \hspace{0.5cm}\mbox{if }A,B\mbox{ are matrix-valued functions in }\R^2.
\end{align*}
With the notations above it is easy to check that if $f:\R^2\to\R$ is a scalar function and $v:\R^2\to\R^2$ is a vector field, then
$$
f*\curl v=\nabla^\perp f\star v,
$$
$$
\nabla^\perp f\star \dive(v\otimes v)=\nabla\nabla^\perp f\star(v\otimes v).
$$

\medskip

\subsection{Linear Transport Equation}
Consider the Cauchy problem for the linear transport equation
\begin{equation}
\begin{cases}
\partial_t u+ b \cdot \nabla u=0, \\
u(0,\cdot)=u_0,
\end{cases}
\label{eq:te}
\end{equation}
where the vector field $b:[0,T]\times\R^n\rightarrow\R^n$ and the initial datum $u_0:\R^n\rightarrow\R$ are given. The Cauchy-Lipschitz theory gives existence and uniqueness of smooth solutions of $\eqref{eq:te}$, provided the vector field is Lipschitz in space uniformly in time. When the vector field is not Lipschitz, classical solutions do not exist in general and weaker definitions of solutions must be considered. We start with the definition of distributional solutions. 
\begin{defn}\label{def:ws}
Let $b\in L^1_{\mathrm{loc}}((0,T);L^p_{\mathrm{loc}}(\R^n))$ be a divergence-free vector field and $u_0\in L^q(\R^n)$, where $1/p+1/q \leq 1$. The function $u\in L^{\infty}((0,T);L^q(\R^n))$ is a distributional solution of \eqref{eq:te} if for any $\varphi\in C^\infty_c([0,T)\times\R^n)$ the following equality holds: 
$$
\int_0^T \int_{\R^n} u(\partial_t\varphi+b\cdot\nabla\varphi )\de x \de t+\int_{\R^n} u_0\varphi_{|_{t=0}} \de x=0.
$$
\end{defn}
The existence of global weak solutions in the sense of the previous definition is proved in \cite{DPL}. We note that the definition of distributional solution requires that the product $ub\in L^1_{\mathrm{loc}}$: this is in general not true in several applications, as in the case of the 2D Euler equations. For this reason in \cite{DPL} the authors introduce also the concept of renormalized solutions.
\begin{defn}
Let $b\in L^1_{\mathrm{loc}}((0,T);L^1_{\mathrm{loc}}(\R^n))$ be a divergence-free vector field and $u_0\in L^q(\R^n)$ for some $q\geq 1$. A function $u\in L^\infty((0,T);L^q(\R^n))$  is called renormalized solution of $(\ref{eq:te})$ if for any $\beta\in C^1(\R)\cap L^\infty(\R)$, vanishing in a neighbourhood of $0$, the equality
$$
\int_0^T \int_{\R^n} \beta(u)(\partial_t\varphi+b\cdot\nabla\varphi) \de x \de t+\int_{\R^n} \beta(u_0)\varphi_{|_{t=0}} \de x=0
$$
holds for any $\varphi\in C^\infty_c([0,T)\times\R^n)$.
\end{defn}
It is worth noticing that, when $u$ and $b$ satisfy the integrability hypothesis in Definition \ref{def:ws}, renormalized solutions are distributional solutions.\\
Finally we give the definition of Lagrangian solutions, which encodes at a weak level the fact that the solution of \eqref{eq:te} admits a representation formula in terms of the flow of the vector field $b$. We start by giving the definition of regular Lagrangian flow introduced in \cite{Am}.
\begin{defn}
Let $b\in L^1((0,T);L^1_{\mathrm{loc}}(\R^n))$ be a divergence-free vector field. We say that $X:(0,T)\times\R^n\rightarrow\R^n$ is a regular Lagrangian flow associated to $b$ if
\begin{enumerate}
\item for a.e. $x\in\R^n$ the map $t\mapsto X(t,x)$ is an absolutely continuous integral solution of the ordinary differential equation
\begin{equation}
\begin{cases}
\frac{\de}{\de t}X(t,x)=b(t,X(t,x)), \\
X(0,x)=x,
\end{cases}
\end{equation}
\item the push-forward measure $X(t,\cdot)_\#\mathscr{L}^n$
\begin{equation}\label{eq:incom}
X(t,\cdot)_\# \mathscr{L}^n=\mathscr{L}^n.
\end{equation}
\end{enumerate}
\end{defn}
Now we are ready to give the definition of Lagrangian solutions of the transport equation \eqref{eq:te}. 
\begin{defn}
Let $u_0\in L^q(\R^n)$ be given. A function $u$ is called a {\em Lagrangian solution} of (\ref{eq:te}) if $u\in L^\infty((0,T);L^q(\R^n))$ and there exists an a.e. invertible regular Lagrangian flow $X$ associated to $b$ such that
$$
u(t,x)=u_0(X^{-1}(t,\cdot)(x))
$$
for all $t\in (0,T)$ and a.e. $x\in\R^n$, where $X^{-1}(t,\cdot)$ denotes the inverse map in space at a fixed time $t$.
\end{defn}

Next, we recall a stability  result for Lagrangian solutions of \eqref{eq:te}. We start by stating the hypothesis on the vector field $b$ which will be often used in the following:
\begin{itemize}
\item[(R1)] The vector field $b$ can be decomposed as
$$
\frac{|b(t,x)|}{1+|x|}=b_1(t,x)+b_2(t,x),
$$
with $b_1\in L^1((0,T);L^1(\R^n))$ and $b_2\in L^1((0,T);L^\infty(\R^n))$.
\bigskip
\item[(R2a)] The vector field $b$ satisfies
$$
b\in L^1((0,T);W^{1,p}_{\mathrm{loc}}(\R^n)) \ \mathrm{for \ some } \ p>1.
$$
\medskip
\item[(R2b)] For every $i,j=1,...,n$ we have
$$
\partial_j b^i=S_j^i g \hspace{0.5cm} \mathrm{in} \ \mathcal{D}'((0,T)\times\R^n),
$$
where $S_j^i$ are singular integral operators of fundamental type in $\R^n$ (acting as operators in $\R^n$ at fixed time) and the function $g\in L^1((0,T);L^1(\R^n))$. See \cite{BC} for the main definitions.
\bigskip
\item[(R3)] The vector field $b$ satisfies
$$
b\in L^p_{\mathrm{loc}}((0,T)\times\R^n) \ \mathrm{for \ some} \ p>1.
$$
\end{itemize}
\medskip

The stability theorem for Lagrangian solutions of the transport equation \eqref{eq:te} that we will use in the sequel is the following, see \cite{CDL, BC} for the proof.
\begin{thm}\label{prop:stab}
Let $b_\e,b$ be divergence-free vector fields satisfying assumptions (R1), (R2a) or (R2b), (R3). Assume that $b_\e\to b$ in $L^1((0,T);L^1_{\mathrm{loc}}(\R^n))$ and that for some decomposition $\frac{|b_\e(t,x)|}{1+|x|}=b_{\e,1}(t,x)+b_{\e,2}(t,x)$ as in assumption (R1) we have that
$$
\| b_{\e,1}\|_{L^1((0,T);L^1(\R^n))}+\|b_{\e,2}\|_{L^1((0,T);L^\infty(\R^n))} \leq C.
$$
Consider a Lagrangian solution $u^\e$ of \eqref{eq:te} with coefficient $b_\e$ and initial datum $u_0^\e\in L^q(\R^n)$, as well as $u$ associated to $b$ and $u_0\in L^q(\R^n)$. If $u_0^\e\to u_0$ in $L^q(\R^n)$ with $1\leq q<\infty$, then $u^\e\to u$ in $C([0,T];L^q(\R^n))$.
\end{thm}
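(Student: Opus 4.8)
To prove Theorem~\ref{prop:stab}, the plan is to separate the argument into two independent parts: the stability of the regular Lagrangian flows $X_\e \to X$ associated with $b_\e$ and $b$, and the transfer of this convergence to the transport solutions through the representation formulas $u^\e(t,\cdot) = u_0^\e \circ X_\e^{-1}(t,\cdot)$ and $u(t,\cdot) = u_0 \circ X^{-1}(t,\cdot)$. For the first part I would begin by recalling that, under (R1), (R2a) or (R2b), and (R3), the theory of regular Lagrangian flows (\cite{DPL, Am, BC, CDL}) produces a unique and a.e. invertible regular Lagrangian flow for each of the fields; the growth condition (R1), together with the uniform bound on the decomposition of $b_\e$, prevents the trajectories from escaping to infinity and yields a uniform control on the superlevel sets $\{x \in B_R : |X_\e(t,x)| > \lambda\}$, i.e. a uniform tightness of the flows. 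I will use this tightness later to pass from local to global $L^q$ estimates.

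The core of the proof is the quantitative stability of the flows, which I would establish by a Crippa--De Lellis type estimate. Fixing $R,\delta > 0$ and considering the functional $\Phi_\e(t) = \int_{B_R} \log\big(1 + \delta^{-1}|X_\e(t,x) - X(t,x)|\big)\de x$, one differentiates in time along the two ordinary differential equations, uses $|\dot X_\e - \dot X| = |b_\e(t,X_\e) - b(t,X)|$, and splits this into $|b_\e(t,X_\e) - b(t,X_\e)|$ plus $|b(t,X_\e) - b(t,X)|$. Integrating the first contribution and changing variables via the incompressibility of $X_\e$ produces a factor $\delta^{-1}\|b_\e - b\|_{L^1((0,T);L^1(B_{R'}))}$ (here the tightness above is needed to keep the domain of integration essentially bounded), while the second is controlled by the pointwise Lipschitz-type bound $|b(t,x) - b(t,y)| \lesssim |x-y|\big(M(\nabla b)(t,x) + M(\nabla b)(t,y)\big)$ under (R2a) (respectively its singular-integral counterpart from \cite{BC} under (R2b)), together with (R3). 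A Gronwall argument then gives $\sup_{t \in [0,T]} \Phi_\e(t) \le C\big(1 + \delta^{-1}\|b_\e - b\|_{L^1}\big)$ with $C$ independent of $\e$ and $\delta$; choosing $\delta = \|b_\e - b\|_{L^1}^{1/2} \to 0$ and applying Chebyshev yields $X_\e(t,\cdot) \to X(t,\cdot)$ in measure on every ball, uniformly in $t \in [0,T]$. Since the inverse flows are governed by the time-reversed fields, which satisfy the same hypotheses with the same uniform bounds, the identical argument gives the analogous convergence $X_\e^{-1}(t,\cdot) \to X^{-1}(t,\cdot)$.

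For the transfer to the transport solutions I would write $u^\e(t,\cdot) - u(t,\cdot) = (u_0^\e - u_0) \circ X_\e^{-1}(t,\cdot) + \big(u_0 \circ X_\e^{-1}(t,\cdot) - u_0 \circ X^{-1}(t,\cdot)\big)$. The $L^q$ norm of the first summand equals $\|u_0^\e - u_0\|_{L^q}$ by incompressibility of $X_\e^{-1}(t,\cdot)$, hence vanishes. For the second summand I would approximate $u_0$ in $L^q(\R^n)$ by $g \in C_c(\R^n)$ and bound $\|u_0 \circ X_\e^{-1}(t,\cdot) - u_0 \circ X^{-1}(t,\cdot)\|_{L^q} \le 2\|u_0 - g\|_{L^q} + \|g \circ X_\e^{-1}(t,\cdot) - g \circ X^{-1}(t,\cdot)\|_{L^q}$, again using incompressibility for the first two terms; the last term tends to $0$ uniformly in $t$ by the uniform continuity and compact support of $g$, the convergence in measure of $X_\e^{-1}$ from the previous step, the uniform tightness of the flows, and dominated convergence. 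Letting first $\e \to 0$ and then $g \to u_0$ in $L^q$ gives $\sup_{t \in [0,T]} \|u^\e(t,\cdot) - u(t,\cdot)\|_{L^q} \to 0$, and the continuity $t \mapsto u(t,\cdot) \in L^q$ follows from the corresponding continuity of $t \mapsto X^{-1}(t,\cdot)$ in measure.

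The step I expect to be the main obstacle is the quantitative flow estimate under assumption (R2b): there $\nabla b$ is only a singular integral of an $L^1$ function, so $M(\nabla b)$ is merely of weak type $(1,1)$ and the plain Gronwall estimate above breaks down; one must instead run the argument on superlevel sets of the maximal function and exploit the equi-integrability of the $L^1$ datum appearing in (R2b), as carried out in \cite{BC}. A secondary, more bookkeeping, point is the passage from local to global $L^q$ convergence: without the uniform control provided by (R1) the flows could carry mass off to spatial infinity and only $L^q_{\mathrm{loc}}$ convergence would survive; hypothesis (R1) and its uniform bound are what supply the tightness used throughout.
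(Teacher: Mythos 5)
Your outline is correct and matches the argument the paper relies on: Theorem~\ref{prop:stab} is stated without proof and referred to \cite{CDL, BC}, where exactly this strategy is carried out — quantitative stability of the regular Lagrangian flows via the functional $\int_{B_R}\log\bigl(1+\delta^{-1}|X_\e-X|\bigr)\de x$ (with the superlevel-set refinement for the singular-integral case (R2b)), followed by transfer to $u^\e=u_0^\e\circ X_\e^{-1}$ using incompressibility, approximation of $u_0$ by continuous compactly supported functions, and the tightness coming from (R1). The only minor imprecision is the reference to a Gronwall argument: in the standard estimate the maximal-function term is simply integrated in time and bounded directly, but this does not affect the validity of your proof.
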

We conclude this subsection with a technical lemma which gives an estimate on the measure of the superlevels of a regular Lagrangian flow $X$; the proof can be found in \cite{CDL}. Define the set $G_\lambda$ as
$$
G_\lambda:=\{ x\in \R^n:|X(t,x)|\leq \lambda \mbox{ for almost every }t\in[0,T] \}.
$$
\begin{lem}\label{lem:lag}
Let $b:(0,T)\times\R^n\to\R^n$ be a vector field which admits a decomposition as in (R1) and let $X$ be a regular Lagrangian flow relative to $b$ with compression constant $L$. Then for every $r,\, \lambda>0$ it holds
$$
\mathscr{L}^n(B_r\setminus G_\lambda)\leq g(r,\lambda),
$$
where the function $g$ depends on $\| b_1\|_{L^1((0,T);L^1(\R^n))}, \| b_2\|_{L^1((0,T);L^\infty(\R^n))}$ and $L$, and satisfies $g(r,\lambda)\to 0$ for fixed $r$ and $\lambda\to\infty$.
\end{lem}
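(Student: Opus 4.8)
The plan is to follow the strategy of \cite{CDL}: combine a Gronwall-type control of the sublinear growth of trajectories with a Chebyshev inequality and with the change-of-variables bound encoded in the compression constant $L$. First I would reduce to the case $b_1,b_2\ge 0$; since $|b(t,x)|/(1+|x|)$ is itself nonnegative, replacing $b_1,b_2$ by $|b_1|,|b_2|$ still gives a decomposition as in (R1) and does not increase the norms $\|b_1\|_{L^1((0,T);L^1)}$ and $\|b_2\|_{L^1((0,T);L^\infty)}$. For a.e.\ $x\in\R^n$ the curve $t\mapsto X(t,x)$ is absolutely continuous and solves the ODE, so $t\mapsto 1+|X(t,x)|$ is absolutely continuous and
$$
\frac{\de}{\de t}\log\big(1+|X(t,x)|\big)\le \frac{|b(t,X(t,x))|}{1+|X(t,x)|}\le b_1(t,X(t,x))+\|b_2(t,\cdot)\|_{L^\infty}\qquad\text{for a.e.\ }t.
$$
Integrating in time and using $b_1\ge 0$ gives, for a.e.\ $t\in[0,T]$,
$$
1+|X(t,x)|\le (1+|x|)\exp\big(A(x)+\|b_2\|_{L^1((0,T);L^\infty)}\big),\qquad A(x):=\int_0^T b_1(s,X(s,x))\,\de s .
$$

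Next I would turn non-membership in $G_\lambda$ into a lower bound on $A$. If $x\in B_r\setminus G_\lambda$, then $|X(t,x)|>\lambda$ on a set of positive Lebesgue measure in $[0,T]$, hence by continuity of $t\mapsto|X(t,x)|$ we have $\sup_{t\in[0,T]}|X(t,x)|\ge\lambda$; taking the essential supremum in $t$ in the previous display and using $|x|\le r$ yields
$$
A(x)\ge \log\frac{1+\lambda}{1+r}-\|b_2\|_{L^1((0,T);L^\infty)}=:M(r,\lambda).
$$
Therefore $B_r\setminus G_\lambda\subseteq\{x\in B_r: A(x)\ge M(r,\lambda)\}$, and for $\lambda$ large enough that $M(r,\lambda)>0$ the Chebyshev inequality gives $\mathscr{L}^n(B_r\setminus G_\lambda)\le M(r,\lambda)^{-1}\int_{B_r}A(x)\,\de x$.

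Finally, by Fubini--Tonelli and the change of variables $y=X(s,x)$, which costs exactly a factor $L$ since $X(s,\cdot)_\#\mathscr{L}^n\le L\,\mathscr{L}^n$, one bounds
$$
\int_{B_r}A(x)\,\de x\le \int_0^T\!\int_{\R^n} b_1(s,X(s,x))\,\de x\,\de s\le L\int_0^T\|b_1(s,\cdot)\|_{L^1}\,\de s=L\,\|b_1\|_{L^1((0,T);L^1)} .
$$
Hence the statement holds with
$$
g(r,\lambda):=\min\left\{\mathscr{L}^n(B_r),\ \frac{L\,\|b_1\|_{L^1((0,T);L^1(\R^n))}}{\big(\log\tfrac{1+\lambda}{1+r}-\|b_2\|_{L^1((0,T);L^\infty(\R^n))}\big)_+}\right\}
$$
(with the convention $c/0=+\infty$), and $g(r,\lambda)\to 0$ as $\lambda\to\infty$ for fixed $r$ because the denominator diverges. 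The only points that need some care are the measurability of $x\mapsto A(x)$ (from measurability of $(t,x)\mapsto X(t,x)$ and Fubini--Tonelli) and the passage from ``for a.e.\ $t$'' in the definition of $G_\lambda$ to a pointwise supremum, which uses continuity of the trajectories; I do not expect a genuine obstacle here, the only mildly delicate point being that one must integrate the logarithmic derivative of $1+|X|$ rather than $|X|$ itself, so that the change of variables consumes only the compression constant $L$ and not an exponential factor.
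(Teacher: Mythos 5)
Your argument is correct and is essentially the proof of this lemma given in the cited reference \cite{CDL} (the paper itself does not reprove it): logarithmic Gronwall estimate on $1+|X(t,x)|$, Chebyshev on the superlevels of $A(x)=\int_0^T b_1(s,X(s,x))\,\de s$, and the compression constant to control $\int_{B_r}A$. The only cosmetic point is that $|b_1|+|b_2|$ is an upper bound for $|b|/(1+|x|)$ rather than an exact decomposition, but since only the upper bound is used this is harmless.
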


\section{The vortex blob method}
This section is devoted to the description of the vortex blob approximation and some of its properties. 
\subsection{Description of the method.} 
Consider an initial vorticity $\omega_0\in L^p_c(\R^2)$ with $1\leq p\leq \infty$. Let $\e\in (0,1)$, we consider two small parameters in $(0,1)$, which later will be chosen as functions of $\e$, denoted by $\delta(\e)$ and $h(\e)$.\\
First of all, we consider the lattice
$$
\Lambda_h:=\{\alpha_i\in\Z\times\Z:\alpha_i=h(i_1,i_2), \mbox{ where }i_1,i_2\in\Z  \},
$$
and define $R_i$ the square with sides of lenght $h$ parallel to the coordinate axis and centered at $\alpha_i\in \Lambda_h$. Let $j_{\delta}$ be a standard mollifier and define 
\begin{equation}\label{eq:idv}
\omega_0^\e:=\omega_0*j_{\delta(\e)}.
\end{equation}
For any $\delta\in(0,1)$ the support of $\omega_0^\e$ is contained in a fixed compact set in $\R^2$, then it can be tiled by a finite number $N(\e)$ of squares $R_i$. Define the quantities
$$
\Gamma^\e_i=\int_{R_i} \omega_0^\e(x) \ \de x, \hspace{0.5cm}\mbox{for }i=1,..., N(\e).
$$
Let $\varphi_\e$ be another mollifier, we define the approximate vorticity to be
\begin{equation}\label{def:vb}
\omega^\e(t,x)=\sum_{i=1}^{N(\e)} \Gamma_i^\e \varphi_\e(x-X^\e_i(t)), 
\end{equation}
where $\{X^\e_i(t)\}_{i=1}^{N(\e)}$ is a solution of the O.D.E. system
\begin{equation}\label{eq:vb}
\begin{cases}
\dot{X}^\e_i(t)=v^\e(t,X^\e_i(t)), \\
X^\e_i(0)=\alpha_i,
\end{cases}
\end{equation}
with $v^{\e}$ defined as 
\begin{equation}\label{eq:av}
v^\e(t,x)=K*\omega^\e(t,x)=\sum_{i=1}^{N(\e)} \Gamma_i^\e K_\e(x-X^\e_i(t)),
\end{equation}
where $K_\e=K*\varphi_\e$.
Note that, since $\delta$ and $h$ are $\e$-dependent, we only use the superscript $\e$. The ordinary differential equations \eqref{eq:vb} are known as the {\em vortex-blob approximation}.\\
It is not difficult to show the bound
\begin{equation}\label{eq:epsvreg}
\sup_{t\in[0,T]}\left( \|v^\e(t,\cdot)\|_{L^\infty}+\|\nabla\,v^\e(t,\cdot)\|_{L^\infty}\right)\leq\frac{C}{\e^2}, 
\end{equation}
see \cite{DPM}. From \eqref{eq:epsvreg} it follows that, for every fixed $\e>0$, there exists a unique smooth solution $\{X^\e_i(t)\}_{i=1}^{N(\e)}$ of the O.D.E. system \eqref{eq:vb}, which implies that $v^{\e}$ and $\omega^{\e}$ are well-defined smooth functions. Note that $v^\e$ and $\omega^\e$ are not exact solutions of the Euler equations because of the presence of an error term, due to the fact that each blob is rigidly translated by the flow. Precisely, the approximate vorticity $\omega^\e$ satisfies the following equation
\begin{equation}\label{eq:wvb}
\partial_t\omega^\e+v^\e\cdot\nabla\omega^\e=E_\e,
\end{equation}
where by a direct computation the error term is given by
\begin{equation}\label{eq:ee}
E_\e(t,x):=\sum_{i=1}^{N(\e)}\left[v^\e(t,x)-v^\e(t,X_i^\e(t)\right]\cdot \nabla\varphi_\e(x-X^\e_i(t))\Gamma^\e_i.
\end{equation}
Concerning the approximate velocity $v^\e$, consider the quantity
$$
w^\e=\partial_t v^\e+\left(v^\e\cdot\nabla\right) v^\e.
$$
Since $w^\e$ satisfies the system
\begin{equation}
\begin{cases}
\curl w^\e=E_\e,\\
\dive w^\e=\dive \dive \left(v^\e\otimes v^\e \right),
\end{cases}
\end{equation}
we derive that there exists a function $p^\e$ such that
$$
-\Delta p^\e=\dive \dive \left(v^\e\otimes v^\e \right),
$$
and
$$
w^\e=-\nabla p^\e+K*E_\e.
$$
Then, the velocity given by the vortex-blob approximation verifies the following equations
\begin{equation}\label{eq:vbv}
\begin{cases}
\partial_t v^\e+\left(v^\e\cdot\nabla\right) v^\e+\nabla p^\e=K*E_\e,\\
\dive v^\e=0.
\end{cases}
\end{equation}
Since $v^\e$ is divergence-free, $E_\e$ can be rewritten as $E_\e(t,x)=\dive F_\e(t,x)$ where
\begin{equation}\label{eq:fe}
F_\e(t,x):=\sum_{i=1}^{N(\e)}\left[v^\e(t,x)-v^\e(t,X_i^\e(t)\right]\varphi_\e(x-X^\e_i(t))\Gamma^\e_i.
\end{equation}
\subsection{A priori estimates} 
In this subsection we give the proof of some a priori estimates on $\omega^\e$, $v^\e$, and the error term $F_\e$, taken from \cite{Be}. First of all, we introduce the following auxiliary problem. Let  $\bar{\omega}^\e$ be the solution of the linear transport equation with vector field $v^{\e}$, that is 
\begin{equation}
\begin{cases}
\partial_t \bar{\omega}^\e+ v^\e \cdot \nabla \bar{\omega}^\e=0, \\
\bar{\omega}^\e(0,\cdot)=\omega_0^\e.
\end{cases}
\label{eq:1}
\end{equation}
Since $v^{\e}$ satisfies \eqref{eq:epsvreg}, there exists a unique smooth solution $\bar{\omega}^\e$, which is given by the formula
\begin{equation}\label{eq:baromega}
\bar{\omega}^\e(t,x)= \omega_0^\e((X^\e)^{-1}(t,\cdot)(x)),
\end{equation}
where $X^\e$ is the flow of $v^\e$, that is,
\begin{equation}\label{eq:fv}
\begin{cases}
\dot{X}^\e(t,x)=v^\e(t,X^\e(t,x)), \\
X^\e(0,x)=x.
\end{cases}
\end{equation}
Moreover, since $\dive v^\e=0$, we have
$$
\|\bar{\omega}^\e(t,\cdot)\|_{L^p}=\|\omega_0^\e\|_{L^p}\leq \|\omega_0\|_{L^p}.
$$
We will use $\bar{\omega}^\e$ in order to prove uniform $L^{p}$-bounds on $\omega^\e$. Before doing that, note that $\omega^\e$ can be seen as a discretization of $\varphi_\e*\bar{\omega}^\e$, since a change of variables gives
\begin{equation}\label{eq:int}
\varphi_\e*\bar{\omega}^\e(t,x)=\int_{\R^2}\varphi_\e(x-z)\bar{\omega}^\e(t,z)\de z=\int_{\R^2}\varphi_\e(x-X^\e(t,y))\omega_0^\e(y)\de y,
\end{equation}
compare with \eqref{def:vb}. We now give a lemma which is, loosely speaking, an estimate on the $L^p$ norms of the error we commit substituting the integral in \eqref{eq:int} with the sum in \eqref{def:vb}. The following estimate is new for $1\leq p<\infty$, while the case $p=\infty$ has been proved in \cite{Be}.
\begin{lem}\label{lem:est}
Let $\omega_0\in L^{1}(\R^{2})$ and let $h=h(\e)$ be chosen as
\begin{equation}\label{eq:h}
h(\e)=\frac{\e^4}{\exp\left(C_1\e^{-2}\|\omega_0\|_{L^1}T\right)},
\end{equation}
where $C_1>0$ is a positive constant. Then, the estimate
\begin{equation}\label{es:1}
\sup_{0\leq t\leq T} \| \omega^\e-\varphi_\e*\bar{\omega}^\e\|_{L^p} \leq C \e^{1+\frac{2}{p}}
\end{equation}
holds for all $1\leq p\leq \infty$, where $C>0$ is a positive constant which does not depend on $\e$.
\end{lem}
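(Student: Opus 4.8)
The plan is to estimate the difference between the sum defining $\omega^\e$ and the integral representation \eqref{eq:int} of $\varphi_\e*\bar\omega^\e$ by a careful cubature-error argument, controlling the modulus of continuity of the integrand via the regularity of the blob flow. Write
$$
\omega^\e(t,x)-\varphi_\e*\bar\omega^\e(t,x)=\sum_{i=1}^{N(\e)}\int_{R_i}\Big[\varphi_\e(x-X^\e_i(t))\,\omega_0^\e(y)-\varphi_\e(x-X^\e(t,y))\,\omega_0^\e(y)\Big]\de y + \text{(boundary term)},
$$
where I first replace the discrete weights $\Gamma_i^\e=\int_{R_i}\omega_0^\e$ by the integral of $\omega_0^\e$ over $R_i$ — which is exact — and then I must compare $X^\e_i(t)=X^\e(t,\alpha_i)$, the flow started at the lattice point, with $X^\e(t,y)$ for $y\in R_i$. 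Since $y$ and $\alpha_i$ lie in the same square of side $h$, $|y-\alpha_i|\le Ch$, and by Gronwall applied to \eqref{eq:fv} together with the Lipschitz bound \eqref{eq:epsvreg} one gets
$$
|X^\e(t,y)-X^\e_i(t)|\le Ch\,\exp\!\big(C t\,\|\nabla v^\e\|_{L^\infty_{t,x}}\big)\le Ch\,\exp(C\e^{-2}T).
$$
Here I would also need the sharper form $\|\nabla v^\e(t)\|_{L^\infty}\le C\e^{-2}\|\omega_0\|_{L^1}$, which is the version of \eqref{eq:epsvreg} consistent with the choice of $h(\e)$ in \eqref{eq:h}: this is exactly the point of defining $h(\e)=\e^4/\exp(C_1\e^{-2}\|\omega_0\|_{L^1}T)$, so that $h\,\exp(C_1\e^{-2}\|\omega_0\|_{L^1}T)\le \e^4$.

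Next I would estimate the $L^p_x$ norm of each summand. By the mean value theorem,
$$
\big|\varphi_\e(x-X^\e_i(t))-\varphi_\e(x-X^\e(t,y))\big|\le \|\nabla\varphi_\e\|_{L^\infty}\,|X^\e(t,y)-X^\e_i(t)|\le C\e^{-3}\cdot h\,\exp(C_1\e^{-2}\|\omega_0\|_{L^1}T)\le C\e^{-3}\e^4=C\e,
$$
using $\|\nabla\varphi_\e\|_{L^\infty}\le C\e^{-3}$ for the rescaled mollifier $\varphi_\e(\cdot)=\e^{-2}\varphi(\cdot/\e)$. Moreover this difference is supported, in $x$, in a ball of radius $O(\e)$ around $X^\e_i(t)$, hence has $L^p_x$ norm bounded by $C\e\cdot\e^{2/p}=C\e^{1+2/p}$. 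Summing over $i$ against the weights $\omega_0^\e(y)\,\de y$ and using $\sum_i\int_{R_i}|\omega_0^\e(y)|\de y=\|\omega_0^\e\|_{L^1}\le\|\omega_0\|_{L^1}$, together with the fact (from the disjointness of the supports, up to bounded overlap $O(1)$) that the $L^p_x$ norm of the sum is controlled by the supremum of the individual $L^p_x$ norms times $\|\omega_0\|_{L^1}$ — more precisely one uses that for each fixed $x$ only $O(1)$ blobs are active, so $\|\sum_i a_i(x)\|_{L^p_x}\le C\sup_i\|a_i\|_{L^p_x}\cdot(\#\text{active})$, or alternatively a direct Minkowski/Jensen argument weighting by $|\omega_0^\e(y)|\de y$ and using $\|\cdot\|_{L^1}$-normalization — one obtains the bound $C\e^{1+2/p}$. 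The boundary term, coming from squares $R_i$ only partially covered by $\mathrm{supp}\,\omega_0^\e$, is handled by noting that these squares fill a region of area $O(h)$ near the boundary of a fixed compact set and contribute $O(h\,\e^{-2+2/p})$, which is negligible by the choice of $h$.

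The main obstacle is bookkeeping the overlap of the blob supports to get the correct $L^p$ (rather than $L^1$ or $L^\infty$) dependence: one must not simply bound $\|\sum_i\|_{L^p}$ by $\sum_i\|\cdot\|_{L^p}$, which would lose a factor $N(\e)$, but rather exploit that $\{X^\e_i(t)\}$ stays a near-lattice (the flow is bi-Lipschitz with constant $\exp(C\e^{-2}T)$ but the blobs have width $\e$, and after composing with $h(\e)\ll\e$ the images of distinct cells still only overlap with bounded multiplicity at scale $\e$), so that at each point $x$ only $O(1)$ terms are nonzero. For $p=\infty$ this reduces to Beale's argument in \cite{Be}; the new content is precisely the interpolation-free direct $L^p$ estimate, for which I would either invoke a discrete Jensen inequality in the form $\big\|\sum_i \Gamma_i^\e\,g_i\big\|_{L^p}^p\le \big(\sum_i|\Gamma_i^\e|\big)^{p-1}\sum_i|\Gamma_i^\e|\,\|g_i\|_{L^p}^p$ with $g_i$ the single-blob error and $\sum_i|\Gamma_i^\e|\le\|\omega_0\|_{L^1}$, giving directly $\|\omega^\e-\varphi_\e*\bar\omega^\e\|_{L^p}\le \|\omega_0\|_{L^1}\sup_i\|g_i\|_{L^p}\le C\e^{1+2/p}$, which is the cleanest route and avoids the overlap discussion altogether.
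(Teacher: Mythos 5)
Your proposal is correct and its core coincides with the paper's proof: the same exact rewriting of $\omega^\e-\varphi_\e*\bar{\omega}^\e$ as $\sum_i\int_{R_i}\omega_0^\e(y)\left[\varphi_\e(x-X^\e_i(t))-\varphi_\e(x-X^\e(t,y))\right]\de y$ (note there is in fact no boundary term: the squares tile a set containing $\mathrm{supp}\,\omega_0^\e$, so the replacement of $\Gamma_i^\e$ by the integral is exact), the same mean-value-theorem bound with $\|\nabla\varphi_\e\|_{L^\infty}\leq C\e^{-3}$, and the same control $|X^\e(t,y)-X^\e_i(t)|\leq C h\,\mathrm{Lip}(X^\e(t,\cdot))\leq Ch\exp(C\e^{-2}\|\omega_0\|_{L^1}T)$, which the choice \eqref{eq:h} turns into $C\e^4$. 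Where you genuinely differ is the final assembly of the $L^p$ norm: the paper carries out the computation only at the two endpoints --- integrating in $x$ after rescaling for $p=1$ (gaining a factor $\e^{-1}$) and taking the supremum for $p=\infty$ (factor $\e^{-3}$) --- and then interpolates $\|f\|_{L^p}\leq\|f\|_{L^1}^{1/p}\|f\|_{L^\infty}^{1-1/p}$ to reach $\e^{1+2/p}$; you instead bound each single-blob error directly in $L^p_x$ (pointwise size $C\e$ on a set of measure $C\e^2$, hence $L^p$ norm $C\e^{1+2/p}$) and sum via Minkowski's integral inequality against the weight $|\omega_0^\e(y)|\de y$. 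Both are valid; your route avoids interpolation and is slightly more self-contained, at the mild price of using that $\varphi$ is compactly supported (which the paper also uses elsewhere). One remark: your lengthy discussion of the overlap multiplicity of the blob supports is unnecessary and somewhat misleading --- the plain weighted triangle/Minkowski inequality already gives $\|\omega_0\|_{L^1}\cdot\sup_{i,y}\|\varphi_\e(\cdot-X^\e_i(t))-\varphi_\e(\cdot-X^\e(t,y))\|_{L^p}$ with no factor of $N(\e)$, because the weights $|\Gamma_i^\e|$ sum to at most $\|\omega_0\|_{L^1}$; no bounded-multiplicity argument is needed, and your concluding ``Jensen'' inequality is just this weighted triangle inequality in disguise.
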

\begin{proof}
We start by proving the inequality \eqref{es:1} in the case $p=1$. By using the definitions of $\omega^\e$ and $\bar{\omega}^\e$ we have that
\begin{align}
& \int_{\R^2} \left|\omega^\e(t,x)-\varphi_\e*\bar{\omega}^\e(t,x)\right|\de x  \nonumber \\
& = \int_{\R^2}\left|
\displaystyle\sum_i \Gamma^\e_i \varphi_\e(x-X^\e_i(t))-\int_{\R^2} \varphi_\e(x-z)\bar{\omega}^\e(t,z) dz
\right| \de x \nonumber \\
& = \int_{\R^2} \left|\displaystyle\sum_i \int_{R_i}\omega_0^\e(y)\de y \varphi_\e(x-X^\e_i(t))-\int_{\R^2}\varphi_\e(x-z)\omega_0^\e((X^\e)^{-1}(t,\cdot)(z)) dz\right| \de x \nonumber \\
&=\int_{\R^2} \left|
\displaystyle\sum_i \int_{R_i} \omega_0^\e(y)\varphi_\e(x-X^\e_i(t))\de y -\int_{\R^2} \varphi_\e(x-X^\e(t,y))\omega_0^\e(y) \de y \right|\de x \nonumber \\
&=\int_{\R^2} \left|
\displaystyle\sum_i \int_{R_i} \omega_0^\e(y)\left[\varphi_\e(x-X^\e_i(t))-\varphi_\e(x-X^\e(t,y))\right] \de y \right| \de x \nonumber \\
&\leq \int_{\R^2} \sum_i \int_{R_i} \left| \omega_0^\e(y)\right| |\underbrace{\varphi_\e(x-X^\e_i(t))-\varphi_\e(x-X^\e(t,y))}_{(*)}| \de y \de x \label{resc}.
\end{align}
For $(\ast)$ we have the following estimate
\begin{align*}
& \varphi_\e(x-X^\e_i(t))-\varphi_\e(x-X^\e(t,y)) \\
&= \int_0^1 \nabla\varphi_\e(x-X^\e(t,y)+s(X^\e(t,y)-X^\e_i(t))) \de s \left(X^\e(t,y)-X^\e_i(t)\right) \\
& =\e^{-3} \int_0^1 \nabla\varphi\left(\frac{s(x-X^\e_i(t))+(1-s)(x-X^\e(t,y))}{\e}\right) \de s \left(X^\e(t,y)-X^\e_i(t)\right).
\end{align*}
So, for any $y\in R_i$ we have that
$$
\begin{vmatrix}
 X^\e(t,y)-X^\e_i(t)
\end{vmatrix}\leq C \,\mathrm{Lip}(X^\e(t,\cdot))\, h,
$$
where $\mathrm{Lip}(X^\e(t,\cdot))$ is the Lipschitz constant of the flow $X^\e(t,\cdot)$, which is bounded by
\begin{equation}
\mathrm{Lip}(X^\e(t,\cdot))\leq \exp\left(C\e^{-2}\|\omega_0\|_{L^1}T\right),
\end{equation}
as a consequence of \eqref{eq:epsvreg}. Then, rescaling in the $x$ variable in \eqref{resc} we have
$$
\|\omega^\e(t,\cdot)-\varphi_\e*\bar{\omega}^\e(t,\cdot)\|_{L^1} \leq h \,\e^{-1}\,\mathrm{Lip}(X^\e(t,\cdot))\, \|\nabla\varphi\|_{L^1}\|\omega_0\|_{L^1}.
$$
Choosing the function $h$ as in \eqref{eq:h} we get \eqref{es:1} for $p=1$.\\
For $p=\infty$ we can argue in a similar way; by the same computations as in \eqref{resc} we have that
\begin{align*}
&|\omega^\e(t,x)-\varphi_\e*\bar{\omega}^\e(t,x)|\\ &\leq\displaystyle\sum_{i=1}^{N(\e)}\int_{R_i}|\omega_0^\e(y)||\underbrace{\varphi_\e(x-X^\e_i(t))-\varphi_\e(x-X^\e(t,y))}_{(*)}|\de y,
\end{align*}
and we can estimate $(\ast)$ as before, so that
$$
\|\omega^\e(t,\cdot)-\varphi_\e*\bar{\omega}^\e(t,\cdot)\|_{L^\infty} \leq h \,\e^{-3}\,\mathrm{Lip}(X^\e(t,\cdot)) \,\|\nabla\varphi\|_{L^\infty}\|\omega_0\|_{L^1},
$$
and choosing $h$ as in \eqref{eq:h} we get the result.\\
Finally, by interpolating we have
$$
\|\omega^\e-\varphi_\e*\bar{\omega}^\e\|_{L^p}\leq \|\omega^\e-\varphi_\e*\bar{\omega}^\e\|_{L^1}^{\frac{1}{p}}\|\omega^\e-\varphi_\e*\bar{\omega}^\e\|_{L^\infty }^{1-\frac{1}{p}}\leq C\e^{1+\frac{2}{p}},
$$
and this concludes the proof.
\end{proof}
We are now in the position to prove the uniform $L^p$ bound on $\omega^\e$.
\begin{lem}\label{lem:bound}
Let $\omega_0 \in L^p_c(\R^2)$. Then, the approximate vorticities $\omega^\e$ defined in \eqref{def:vb} satisfy the following
$$
\sup_{0\leq t\leq T}\| \omega^\e(t,\cdot)\|_{L^p}\leq C\left( \|\omega_0\|_{L^p}+\|\omega_0\|_{L^1}^{\frac{1}{p}}\right)
$$
for $1\leq p<\infty$, and
$$
\sup_{0\leq t\leq T}\|\omega^\e(t,\cdot)\|_{L^\infty}\leq C\|\omega_0\|_{L^\infty}.
$$
\end{lem}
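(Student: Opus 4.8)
The plan is to estimate $\omega^\e$ by comparison with the regularized transported vorticity $\varphi_\e*\bar\omega^\e$, whose $L^p$ norm is under immediate control, and to absorb the gap between the two using Lemma~\ref{lem:est}. I would first stress why a head-on estimate of $\|\omega^\e(t,\cdot)\|_{L^p}$ from the representation \eqref{def:vb} is doomed: the blobs $\varphi_\e(\cdot-X^\e_i(t))$ overlap, and the number of them active at a given point is governed by $\mathrm{Lip}(X^\e(t,\cdot))$, which by \eqref{eq:epsvreg} is only bounded by $\exp(C\e^{-2}\|\omega_0\|_{L^1}T)$ and hence blows up as $\e\to 0$. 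The detour through $\bar\omega^\e$, and therefore through Lemma~\ref{lem:est}, is for this reason essential.

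Concretely, for every $t\in[0,T]$ I would split
$$
\|\omega^\e(t,\cdot)\|_{L^p}\le \|\varphi_\e*\bar\omega^\e(t,\cdot)\|_{L^p}+\|\omega^\e(t,\cdot)-\varphi_\e*\bar\omega^\e(t,\cdot)\|_{L^p}.
$$
For the first term, Young's inequality gives $\|\varphi_\e*\bar\omega^\e(t,\cdot)\|_{L^p}\le\|\varphi_\e\|_{L^1}\|\bar\omega^\e(t,\cdot)\|_{L^p}=\|\bar\omega^\e(t,\cdot)\|_{L^p}$, and, as already recorded right after \eqref{eq:fv}, the transport structure of \eqref{eq:1} together with $\dive v^\e=0$ (which makes the flow $X^\e(t,\cdot)$ measure-preserving) and a further use of Young's inequality on \eqref{eq:idv} yield $\|\bar\omega^\e(t,\cdot)\|_{L^p}=\|\omega^\e_0\|_{L^p}\le\|\omega_0\|_{L^p}$. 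The identical chain of inequalities is valid for $p=\infty$. Thus the first term is $\le\|\omega_0\|_{L^p}$ uniformly in $\e$ and $t$.

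For the second term I would invoke Lemma~\ref{lem:est}, which gives $\sup_{[0,T]}\|\omega^\e-\varphi_\e*\bar\omega^\e\|_{L^p}\le C\e^{1+2/p}$; since $\e\in(0,1)$ and $1+2/p>0$, this is bounded by a constant uniformly in $\e$. Unwinding the proof of Lemma~\ref{lem:est}, the relevant endpoint estimates are $\|\omega^\e-\varphi_\e*\bar\omega^\e\|_{L^1}\le C\e^{3}\|\omega_0\|_{L^1}$ and $\|\omega^\e-\varphi_\e*\bar\omega^\e\|_{L^\infty}\le C\e\|\omega_0\|_{L^1}$ with $C$ depending only on $\varphi$ and $T$, so that combining with the bound of the previous paragraph (and with $\e\in(0,1)$) gives a bound of the form $\sup_{[0,T]}\|\omega^\e(t,\cdot)\|_{L^p}\le C(\|\omega_0\|_{L^p}+\|\omega_0\|_{L^1}^{1/p})$ for $1\le p<\infty$. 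For $p=\infty$, the compact support of $\omega_0$ gives $\|\omega_0\|_{L^1}\le|\mathrm{supp}\,\omega_0|\,\|\omega_0\|_{L^\infty}$, hence the second term is $\le C\e\|\omega_0\|_{L^\infty}\le C\|\omega_0\|_{L^\infty}$, and adding the bound $\|\omega_0\|_{L^\infty}$ for the first term we conclude $\sup_{[0,T]}\|\omega^\e(t,\cdot)\|_{L^\infty}\le C\|\omega_0\|_{L^\infty}$.

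Given Lemma~\ref{lem:est}, the proof is only a couple of lines and there is no genuine obstacle left; the sole point that deserves attention is that the ``main'' term of the decomposition must be $\varphi_\e*\bar\omega^\e$ and not $\omega^\e$ itself, precisely so that the $L^p$-contractivity of mollification and the incompressibility of the flow $X^\e$ can be exploited.
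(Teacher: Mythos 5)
Your argument is correct, but for $1\le p<\infty$ it follows a genuinely different (and shorter) route than the paper's. You apply the triangle inequality globally and control the gap $\|\omega^\e-\varphi_\e*\bar{\omega}^\e\|_{L^p}$ directly through the interpolated $L^p$ case of Lemma \ref{lem:est}. The paper instead treats $p=1$ by the trivial bound $\|\omega^\e(t,\cdot)\|_{L^1}\le\sum_i|\Gamma^\e_i|\le\|\omega_0\|_{L^1}$, and for $1<p<\infty$ splits the plane into the superlevel set $A(t)=\{|\omega^\e(t,\cdot)|>1\}$ and its complement: on $A(t)$, whose measure is controlled by Chebyshev and the $L^1$ bound, only the $L^\infty$ endpoint of Lemma \ref{lem:est} is used, while on the complement one uses $|\omega^\e|^p\le|\omega^\e|$ together with the $L^1$ bound again. (For $p=\infty$ your argument and the paper's coincide.) What the paper's detour buys is the precise shape of the constant, namely the factor $\|\omega_0\|_{L^1}^{1/p}$ in the second summand: your computation actually yields $\|\omega_0\|_{L^p}+C\e^{1+2/p}\|\omega_0\|_{L^1}$, with $\|\omega_0\|_{L^1}$ to the first power, and the passage to $\|\omega_0\|_{L^1}^{1/p}$ that you assert in your final display is not justified (nor valid with a constant independent of $\omega_0$). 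This is immaterial for every application of the lemma — all that is needed downstream is a bound uniform in $\e$ depending only on the data, and for compactly supported $\omega_0$ one has $\|\omega_0\|_{L^1}\le|\mathrm{supp}\,\omega_0|^{1-1/p}\|\omega_0\|_{L^p}$ anyway — but you should either state the bound you actually prove or insert the superlevel-set step. Your preliminary discussion of why a head-on estimate from \eqref{def:vb} fails is a reasonable heuristic, though not needed for the proof.
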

\begin{proof}
First of all, the case $p=1$ follows directly from the definition of $\omega^\e$ since
$$
\|\omega^\e(t,\cdot)\|_{L^1}\leq \sum_i^{N(\e)} | \Gamma^\e_i | \int \varphi_\e(x-X^\e_i(t)) \de x \leq \|\omega_0\|_{L^1}.
$$
Let consider now $1<p<\infty$ and let $A(t)$ and $B(t)$ be the sets
$$
A(t):=\{ x\in \R^2:|\omega^\e(t,x)|>1 \},
$$
$$
B(t):=\{ x\in \R^2:|\omega^\e(t,x)|\leq 1 \}.
$$
By Chebishev inequality
$$
\mathscr{L}^2(A(t))\leq C\|\omega_0\|_{L^1},
$$
uniformly in time. Let $\bar{\omega}^\e$ the solution of $(\ref{eq:1})$, we have that
\begin{align*}
\|\omega^\e(t,\cdot)\|_{L^p(A(t))} & \leq \|\varphi_\e*\bar{\omega}^\e(t,\cdot)\|_{L^p(A(t))}+ \|\omega^\e(t,\cdot)-\varphi_\e*\bar{\omega}^\e(t,\cdot)\|_{L^p(A(t))} \\
& \leq C\|\omega_0\|_{L^p}+\mathscr{L}^2(A(t))^{\frac{1}{p}}\|\omega^\e-\varphi_\e*\bar{\omega}^\e\|_{L^\infty} \\
& \leq C \left(\|\omega_0\|_{L^p}+\e\|\omega_0\|_{L^1}^{\frac{1}{p}}\right).
\end{align*}
On the other hand, for the set $B(t)$ we have
$$
\int_B|\omega^\e(t,x)|^p\de x\leq \int_{\R^2}|\omega^\e(t,x)|\de x \leq C \|\omega_0\|_{L^1},
$$
since $|\omega^\e(t,x)|^p\leq |\omega^\e(t,x)|$ on $B(t)$. Combining the previous estimates, since $\e<1$, taking the supremum in time we have the result.\\
Finally, the case $p=\infty$ follows from the triangle inequality and \eqref{es:1}.
\end{proof}
We give now a convergence result for the error term $F_\e$ (see \cite{Be} for the proof).
\begin{lem}\label{lem:fe}
Let $\omega_0\in L^p_c(\R^2)$ with $p\geq 1$, then the quantity $F_\e$ defined in \eqref{eq:fe} satisfies
\begin{equation}\label{convFL1}
\sup_{t\in[0,T]}\|F_\e(t,\cdot)\|_{L^1}\to 0, \hspace{0.5cm}\mbox{as }\e\to 0.
\end{equation}
In particular, for $1<p<2$ we have the following bound
\begin{equation}\label{es:FL1}
\|F_\e(t,\cdot)\|_{L^1}\leq C\delta^{-\beta}\e^{\frac{1}{3}}\|\omega_0\|_{L^1},
\end{equation}
where $\beta=2\left(\frac{1}{p}-\frac{1}{3}\right)$ and $\delta(\e)=\e^\sigma$ with $0<\sigma<1/4$. Moreover, choosing $h(\e)=C_1\e^6\exp\left(-C_0\e^{-2}\right)$ where $C_1,C_0$ are positive constants, we have that $F_\e$ satisfies the following additional bound
$$
\|F_\e(t,\cdot)\|_{L^2}\leq C\delta^{-\beta}\e^{\frac{7}{3}}\|\omega_0\|_{L^1},
$$
which goes to $0$ choosing $\delta$ as above and $0<\sigma<1/7$.
\end{lem}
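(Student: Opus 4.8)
\emph{Proof proposal.} The plan is to reduce all three assertions to a single scalar estimate, namely a Hölder-type control of the approximate velocity $v^\e=K*\omega^\e$ at the scale of one blob, and then to combine it with the uniform bounds on $\omega^\e$ provided by Lemma~\ref{lem:bound} and with the comparison estimate of Lemma~\ref{lem:est}. First, since $\varphi_\e$ is supported in a ball of radius $\lesssim\e$, on the support of $\varphi_\e(\,\cdot-X^\e_i(t))$ one has $|x-X^\e_i(t)|\lesssim\e$, so from \eqref{eq:fe}, $\|\varphi_\e\|_{L^1}\lesssim1$ and $\sum_i|\Gamma^\e_i|\le\|\omega_0\|_{L^1}$ we get
$$
\|F_\e(t,\cdot)\|_{L^1}\le\sum_{i=1}^{N(\e)}|\Gamma^\e_i|\int_{\R^2}|\varphi_\e(x-X^\e_i(t))|\,\bigl|v^\e(t,x)-v^\e(t,X^\e_i(t))\bigr|\de x\le C\,\|\omega_0\|_{L^1}\,\sup_{|z-z'|\lesssim\e}\,|v^\e(t,z)-v^\e(t,z')|,
$$
and it suffices to estimate the modulus of continuity of $v^\e(t,\cdot)$ at scale $\e$, uniformly in $t\in[0,T]$. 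To this end I would write $v^\e(t,x)-v^\e(t,y)=\int_{\R^2}[K(x-z)-K(y-z)]\omega^\e(t,z)\de z$ and split the integral over $\{|x-z|\le2\rho\}$, $\{2\rho<|x-z|\le1\}$ and $\{|x-z|>1\}$, with $\rho:=|x-y|\le\tfrac12$, using Hölder's inequality with $K\in L^{3/2}_{\mathrm{loc}}(\R^2)$ and the mean-value bound $|K(x-z)-K(y-z)|\lesssim\rho\,|x-z|^{-2}$ on the first two regions and $|x-z|^{-2}\le1$ on the third. This produces the Biot--Savart estimate
$$
|v^\e(t,x)-v^\e(t,y)|\le C|x-y|^{1/3}\bigl(\|\omega^\e(t,\cdot)\|_{L^1}+\|\omega^\e(t,\cdot)\|_{L^3}\bigr),\qquad|x-y|\le\tfrac12.
$$

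Next I would establish \eqref{es:FL1} and \eqref{convFL1} for $1\le p\le2$ by controlling $\|\omega^\e\|_{L^3}$. The mollified datum $\omega_0^\e=\omega_0*j_\delta$ lies in $L^3_c(\R^2)$, and Young's inequality gives $\|\omega_0^\e\|_{L^3}\le\|j_\delta\|_{L^r}\|\omega_0\|_{L^p}\le C\delta^{-2(1/p-1/3)}\|\omega_0\|_{L^p}=C\delta^{-\beta}\|\omega_0\|_{L^p}$, with $1/r'=1/p-1/3$. Combining this with Lemma~\ref{lem:est} applied with exponent $3$ (so that $\sup_t\|\omega^\e-\varphi_\e*\bar\omega^\e\|_{L^3}\le C\e^{5/3}$) and with $\|\varphi_\e*\bar\omega^\e\|_{L^3}\le\|\bar\omega^\e\|_{L^3}=\|\omega_0^\e\|_{L^3}$, one gets $\sup_t\|\omega^\e(t,\cdot)\|_{L^3}\le C\delta^{-\beta}\|\omega_0\|_{L^p}$, while $\sup_t\|\omega^\e(t,\cdot)\|_{L^1}\le\|\omega_0\|_{L^1}$. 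Inserting this and $\rho\lesssim\e$ into the two displays above yields \eqref{es:FL1}; and since $\delta=\e^\sigma$ with $\sigma<1/4$ makes $\e^{1/3}\delta^{-\beta}=\e^{1/3-\sigma\beta}\to0$ (because $\beta\le4/3$ for $1\le p\le2$), this gives \eqref{convFL1} in that range. For $2<p<\infty$ the same scheme works with exponent $p$ in place of $3$ in the Biot--Savart estimate: by Lemma~\ref{lem:bound}, $\omega^\e$ is bounded in $L^1\cap L^p$ with no mollification loss, so $v^\e$ is uniformly $C^{1-2/p}$ and $\|F_\e\|_{L^1}\lesssim\e^{1-2/p}\|\omega_0\|_{L^1}\to0$; for $p=\infty$, $v^\e$ is uniformly log-Lipschitz and $\|F_\e\|_{L^1}\lesssim\e(1+|\log\e|)\|\omega_0\|_{L^1}\to0$.

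For the $L^2$ bound I would argue more carefully, along the lines of \cite{Be}. Taylor-expanding $v^\e(t,x)-v^\e(t,X^\e_i(t))=(x-X^\e_i(t))\cdot\int_0^1\nabla v^\e(t,X^\e_i(t)+s(x-X^\e_i(t)))\,\de s$ in \eqref{eq:fe}, and using again $|x-X^\e_i(t)|\lesssim\e$ on the support of $\varphi_\e(\,\cdot-X^\e_i(t))$, one reduces to estimating the $L^2$ norm of $\e\sum_i|\Gamma^\e_i|\,|\varphi_\e(\,\cdot-X^\e_i(t))|\int_0^1|\nabla v^\e(t,X^\e_i(t)+s(\,\cdot-X^\e_i(t)))|\,\de s$. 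Expanding the square, the products $\varphi_\e(\,\cdot-X^\e_i(t))\varphi_\e(\,\cdot-X^\e_j(t))$ vanish unless $|X^\e_i(t)-X^\e_j(t)|\lesssim\e$; and since $X^\e(t,\cdot)$ is measure preserving while $\omega_0^\e=\omega_0*j_\delta$ satisfies $\|\omega_0^\e\|_{L^\infty}\le C\delta^{-2}\|\omega_0\|_{L^1}$, the local mass $\sum_{j:\,|X^\e_j(t)-X^\e_i(t)|\lesssim\e}|\Gamma^\e_j|$ is of order $\e^2\,\|\omega_0^\e\|_{L^\infty}\lesssim\e^2\delta^{-2}\|\omega_0\|_{L^1}$, up to a discretization error of size $\lesssim\mathrm{Lip}(X^\e(t,\cdot))\,h\lesssim e^{C\e^{-2}}h$ which --- together with the analogous errors in the other terms --- is killed by the super-exponentially small choice $h(\e)=C_1\e^6\exp(-C_0\e^{-2})$ provided $C_0$ is large enough. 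The remaining $\nabla v^\e$ factor is handled by the Calderón--Zygmund estimate $\|\nabla v^\e\|_{L^q}\le C_q\|\omega^\e\|_{L^q}$, again combined with the $L^3$-type control of $\omega^\e$ (the origin of the factor $\delta^{-\beta}$). Bookkeeping the powers of $\e$ and $\delta$ then gives $\|F_\e(t,\cdot)\|_{L^2}\le C\delta^{-\beta}\e^{7/3}\|\omega_0\|_{L^1}$, which tends to $0$ when $\delta=\e^\sigma$ with $0<\sigma<1/7$.

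The main obstacle is the $L^2$ estimate. Once the scalar Biot--Savart bound is available the $L^1$ assertions are essentially immediate, whereas the $L^2$ one requires balancing three competing scales simultaneously --- the blob width $\e$, the mollification width $\delta$ (which enters only through the polynomial loss $\delta^{-\beta}$ in the $L^3$ and $L^\infty$ bounds on $\omega_0^\e$ and $\omega^\e$), and the lattice spacing $h$ (which must be taken exponentially small to beat the exponential growth $\mathrm{Lip}(X^\e(t,\cdot))\sim e^{C\e^{-2}}$ from \eqref{eq:epsvreg} in the discretization errors) --- and then organizing the resulting double sum so that the net exponent of $\e$ is not merely positive but as large as $7/3$.
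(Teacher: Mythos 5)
First, a caveat on the comparison itself: the paper does not prove this lemma at all --- it states it and refers to \cite{Be} --- so there is no internal proof to measure your attempt against, and your proposal has to be judged on its own terms. For \eqref{convFL1} and \eqref{es:FL1} it holds up: reducing $\|F_\e\|_{L^1}$ to the modulus of continuity of $v^\e$ at scale $\e$ via $\sum_i|\Gamma^\e_i|\le\|\omega_0\|_{L^1}$ and $\|\varphi_\e\|_{L^1}=1$, proving the potential-theoretic bound $|v^\e(x)-v^\e(y)|\le C|x-y|^{1/3}(\|\omega^\e\|_{L^1}+\|\omega^\e\|_{L^3})$ by the three-region splitting, and controlling $\sup_t\|\omega^\e\|_{L^3}$ by $C\delta^{-\beta}$ through the mollified datum, Lemma \ref{lem:est} with exponent $3$ and $\|\varphi_\e*\bar{\omega}^\e\|_{L^3}\le\|\omega_0^\e\|_{L^3}$, is correct, and the exponent arithmetic ($\sigma\beta<1/3$ when $\sigma<1/4$ and $\beta\le 4/3$) checks out, as do the cases $p>2$ and $p=\infty$.

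The gap is in the $L^2$ bound, and it is twofold. First, what you offer there is only a sketch: the ``bookkeeping of the powers of $\e$ and $\delta$'' in the double sum is precisely the content of the claim, and you never carry it out. Second, you are chasing the wrong target. The bound as printed, $\delta^{-\beta}\e^{7/3}$, cannot be what is meant: with $\delta=\e^\sigma$ and $\beta\le 4/3$ it tends to zero for every $\sigma<7/4$, so the stated restriction $\sigma<1/7$ would be vacuous; moreover in the proof of Theorem \ref{teo:main2} the lemma is invoked in the form $\|F_\e\|_{L^2}\|v^\e\|_{L^2}\le C\delta^{-7/3}\e^{1/3}$. The intended estimate is $\|F_\e\|_{L^2}\le C\delta^{-(\beta+1)}\e^{1/3}\|\omega_0\|_{L^1}\le C\delta^{-7/3}\e^{1/3}\|\omega_0\|_{L^1}$ (note $\beta+1\le 7/3$ for $p\ge1$), which is exactly what makes $\sigma<1/7$ the relevant condition. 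That bound follows with almost no extra work from ingredients you already have: interpolate $\|F_\e\|_{L^2}\le\|F_\e\|_{L^1}^{1/2}\|F_\e\|_{L^\infty}^{1/2}$, use \eqref{es:FL1} for the first factor, and for the second observe that at a fixed $x$ only blobs with $|X^\e_i(t)-x|\lesssim\e$ contribute, so that
$$
|F_\e(t,x)|\le C\delta^{-\beta}\e^{\frac13}\,\|\varphi_\e\|_{L^\infty}\sum_{i:\,|X^\e_i(t)-x|\le C\e}|\Gamma^\e_i|\le C\delta^{-\beta}\e^{\frac13}\,\e^{-2}\,\|\omega_0^\e\|_{L^\infty}\bigl(C\e+h\,\mathrm{Lip}(X^\e(t,\cdot))\bigr)^2\le C\delta^{-\beta-2}\e^{\frac13},
$$
by your own local-mass argument ($X^\e$ measure preserving, $\|\omega_0^\e\|_{L^\infty}\le C\delta^{-2}\|\omega_0\|_{L^1}$, and the discretization error $h\,\mathrm{Lip}(X^\e)\ll\e$ thanks to $h(\e)=C_1\e^6\exp(-C_0\e^{-2})$); interpolation then gives $\delta^{-\beta-1}\e^{1/3}$. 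By contrast, the Taylor-expansion/double-sum scheme you outline cannot produce $\e^{7/3}$: the increment $|v^\e(x)-v^\e(X^\e_i)|$ at scale $\e$ is only $O(\delta^{-\beta}\e^{1/3})$ (and $\nabla v^\e$ is not uniformly bounded, only $O(\e^{-2})$ in $L^\infty$), and nothing in your setup gains two further powers of $\e$.
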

It is worth to note that the dependence on $p$ of the bound in \eqref{es:FL1} is due to the fact that, in order to obtain the convergence in \eqref{convFL1}, for $1\leq p<2$ we need to regularize the initial vorticity, while is not needed for $p>2$.\\
The uniform bound of Lemma \ref{lem:bound} together with Lemma \ref{lem:fe} are the core of the proof of the theorem proved by Beale in \cite{Be}, where he showed the existence of VB-solutions when the initial vorticity is in $L^p$, with $p>1$, and compactly supported. In detail:
\begin{thm}\label{teo:beale1}
Let $v_0\in L^2_{\mathrm{loc}}(\R^2)$ and assume that the vorticity $\omega_0=\curl v_0\in L^p_c(\R^2)$ for some $p>1$. Let $\omega^\e$ given by the vortex-blob approximation with parameters chosen so that $\delta(\e)=\e^\sigma$ for some $0<\sigma<1/4$, and $h(\e)\leq C\e^4\exp(-C_0\e^{-2})$ for some constants $C_0,C$. Then up to subsequences, $v^\e$ converges strongly in $L^2((0,T);L^2_{\mathrm{loc}}(\R^2))$ to a classical weak solution of the Euler equations with initial velocity $v_0$.
\end{thm}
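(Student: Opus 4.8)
The plan is to carry out the classical compactness scheme of DiPerna--Majda and Beale \cite{DPM, Be}, whose only nontrivial ingredients are the uniform bounds of Lemma~\ref{lem:bound} and the vanishing of the error term in Lemma~\ref{lem:fe}; with $\delta(\e)$ and $h(\e)$ chosen as in the statement, these lemmas (as well as Lemma~\ref{lem:est}) apply. First I would collect the uniform estimates. By Lemma~\ref{lem:bound}, $\omega^\e$ is bounded in $L^\infty((0,T);L^1(\R^2)\cap L^p(\R^2))$. Since $p>1$, the Biot--Savart law $v^\e=K*\omega^\e$ together with Calder\'on--Zygmund estimates gives that $\nabla v^\e$ is bounded in $L^\infty((0,T);L^p(\R^2))$, while splitting the kernel $K$ near and far from the origin (and using $\|\omega^\e\|_{L^1}\le\|\omega_0\|_{L^1}$ for the far part) gives that $v^\e$ is bounded in $L^\infty((0,T);L^2_{\mathrm{loc}}(\R^2))$; hence $v^\e$ is bounded in $L^\infty((0,T);W^{1,p}_{\mathrm{loc}}(\R^2))$ and $v^\e\otimes v^\e$ is bounded in $L^\infty((0,T);L^1_{\mathrm{loc}}(\R^2))$.

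The core of the argument is the compactness step, and the point is to extract equicontinuity in time without ever touching the pressure. Since $p>1$, the embedding $W^{1,p}(B_R)\hookrightarrow L^2(B_R)$ is compact for every $R>0$, which gives compactness of $v^\e$ in the space variable. For the time variable I would test \eqref{eq:vbv} against a divergence-free $\varphi\in C^\infty_c(\R^2;\R^2)$, so that $\nabla p^\e$ drops out and
\[
\frac{\de}{\de t}\int_{\R^2}v^\e\cdot\varphi\,\de x=\int_{\R^2}(v^\e\otimes v^\e):\nabla\varphi\,\de x-\int_{\R^2}F_\e\cdot\nabla\psi_\varphi\,\de x ,
\]
where $\psi_\varphi(y):=\int_{\R^2}K(x-y)\cdot\varphi(x)\,\de x$ is smooth with bounded derivatives and we have used $E_\e=\dive F_\e$ and integrated by parts. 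By the previous step and Lemma~\ref{lem:fe} the right-hand side is bounded uniformly in $\e$ and $t$, so $t\mapsto\int_{\R^2}v^\e\cdot\varphi\,\de x$ is equi-Lipschitz. Coupling this equicontinuity, along a countable dense family of divergence-free test fields, with the compactness in space coming from the uniform $W^{1,p}_{\mathrm{loc}}$ bound, a standard Aubin--Lions/Arzel\`a--Ascoli argument yields a subsequence along which $v^\e\to v$ strongly in $L^2((0,T);L^2_{\mathrm{loc}}(\R^2))$ and $\omega^\e\weaktos\omega=\curl v$ in $L^\infty((0,T);L^p(\R^2))$.

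It then remains to pass to the limit in the weak formulation of \eqref{eq:vbv} tested against divergence-free $\varphi\in C^\infty_c([0,T)\times\R^2;\R^2)$. The strong $L^2_{\mathrm{loc}}$-convergence takes care of the nonlinear term, $v^\e\otimes v^\e\to v\otimes v$ in $L^1((0,T);L^1_{\mathrm{loc}}(\R^2))$; the initial data converge, $v^\e(0,\cdot)=K*\omega^\e(0,\cdot)\to v_0$ in $L^2_{\mathrm{loc}}(\R^2)$, because $\omega^\e(0,\cdot)\to\omega_0$ in $L^1\cap L^p$ (by Lemma~\ref{lem:est} at $t=0$ together with $\omega_0*j_{\delta(\e)}\to\omega_0$); and the forcing $\int_0^T\!\!\int_{\R^2}(K*E_\e)\cdot\varphi\,\de x\,\de t\to0$ since $\|F_\e\|_{L^\infty((0,T);L^1(\R^2))}\to0$ by Lemma~\ref{lem:fe}. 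In the limit one obtains exactly the divergence-free weak formulation of \eqref{eq:eu} with datum $v_0$, while $\dive v=0$ survives the limit; together with the uniform bounds this shows that $v$ is a weak solution of the 2D Euler equations with initial velocity $v_0$. I expect the main obstacle to be precisely the compactness step: the quadratic nonlinearity forces \emph{strong} $L^2_{\mathrm{loc}}$-convergence of $v^\e$, which can be obtained only by marrying the spatial gain coming from the uniform vorticity bound with a temporal equicontinuity read off equation \eqref{eq:vbv}, and this must be arranged while circumventing the pressure $p^\e$ --- controlled only in $L^1_{\mathrm{loc}}$, far outside the range of Calder\'on--Zygmund theory --- whence the systematic use of divergence-free test functions.
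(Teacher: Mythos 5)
Your proposal is correct and follows essentially the route the paper takes: the paper does not actually prove Theorem \ref{teo:beale1} but defers to Beale \cite{Be}, identifying Lemma \ref{lem:bound} and Lemma \ref{lem:fe} as the core ingredients, and your reconstruction uses exactly these in the same way as the paper's own sketch for the $L^1$ case and Lemma \ref{lem:conv} (uniform $L^1\cap L^p$ vorticity bounds giving spatial compactness of $v^\e=K*\omega^\e$, divergence-free test fields to eliminate the pressure and obtain equicontinuity in time, and the vanishing of $F_\e$ in $L^\infty((0,T);L^1)$ for the error term). The only step worth spelling out in a full write-up is the passage from equicontinuity of $t\mapsto\int v^\e\cdot\varphi\,\de x$ for divergence-free $\varphi$ to strong $L^2_{\mathrm{loc}}$ compactness of $v^\e$ itself (divergence-free test fields do not separate points of $L^2_{\mathrm{loc}}$): writing $\varphi=\nabla^\perp\psi$ turns this into time equicontinuity of $\omega^\e$ in $\mathcal{D}'$, which combined with the uniform $L^p$ bound and the compactness of $u\mapsto K*u$ from Lemma \ref{lem:Kcomp} (equivalently, the $\mathrm{Lip}([0,T];W^{-s,r})$ bound plus Aubin--Lions invoked in Lemma \ref{lem:conv}) yields the claimed strong convergence.
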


\medskip

\subsection{The $L^1$ case}
In this subsection we consider the case of initial vorticities $\omega_0\in L^1_c(\R^2)$. In particular, we prove the equi-integrability of the sequence of approximate vorticities $\{\omega^\e\}$ given by the vortex-blob method and this will be crucial in the extension of Beale's result to the case $p=1$. Moreover, the fact that $\omega^\e$ is equi-integrable will also be fundamental for the applications of the linear theory discussed in Section 2 to the 2D Euler equations. We start by showing the equi-integrability of $\omega^\e$ in the following (up to our knowledge original) lemma.
\begin{lem}\label{lem:equi}
Let $\omega_0\in L^1_c(\R^{n})$ and $\omega_0^{\e}$ defined as \eqref{eq:idv}. Then the approximate vorticities $\omega^\e$ as in \eqref{def:vb} are equi-integrable in $L^{1}((0,T)\times\R^2)$. 
\end{lem}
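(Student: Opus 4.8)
The plan is to deduce the equi-integrability of $\{\omega^\e\}$ from that of the regularized transported vorticity $\varphi_\e*\bar\omega^\e$, where $\bar\omega^\e$ is the solution \eqref{eq:baromega} of the linear transport equation \eqref{eq:1}. Indeed, Lemma \ref{lem:est} with $p=1$ gives $\|\omega^\e-\varphi_\e*\bar\omega^\e\|_{L^1((0,T)\times\R^2)}\le TC\e^3\to 0$, so $\{\omega^\e\}$ differs from $\{\varphi_\e*\bar\omega^\e\}$ by a sequence converging to zero in $L^1((0,T)\times\R^2)$, and equi-integrability in $L^1$ (equivalently, by the Dunford--Pettis theorem, relative weak compactness) is preserved under such perturbations. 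Thus it suffices to prove that $\{\varphi_\e*\bar\omega^\e\}$ is bounded in $L^1((0,T)\times\R^2)$, uniformly integrable, and tight. Boundedness is immediate: since $\dive v^\e=0$, the flow $X^\e(t,\cdot)$ of \eqref{eq:fv} preserves the Lebesgue measure, hence $\|\varphi_\e*\bar\omega^\e(t,\cdot)\|_{L^1}\le\|\bar\omega^\e(t,\cdot)\|_{L^1}=\|\omega_0^\e\|_{L^1}\le\|\omega_0\|_{L^1}$ for all $t\in[0,T]$ (compare also with Lemma \ref{lem:bound}).

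For the uniform integrability I would use the de la Vallée-Poussin criterion: since $\omega_0\in L^1(\R^2)$ there is a convex increasing function $\Psi:[0,\infty)\to[0,\infty)$ with $\Psi(0)=0$ and $\Psi(s)/s\to+\infty$ as $s\to\infty$ such that $\int_{\R^2}\Psi(|\omega_0|)\,\de x<\infty$, and I would propagate this bound along the construction. By Jensen's inequality (both $j_{\delta(\e)}$ and $\varphi_\e$ are probability densities and $\Psi$ is convex and increasing), and by the equimeasurability of $\bar\omega^\e(t,\cdot)=\omega_0^\e\circ (X^\e)^{-1}(t,\cdot)$ with $\omega_0^\e=\omega_0*j_{\delta(\e)}$ (again using $\dive v^\e=0$), one obtains
$$
\int_{\R^2}\Psi\bigl(|\varphi_\e*\bar\omega^\e(t,x)|\bigr)\,\de x\le\int_{\R^2}\Psi\bigl(|\bar\omega^\e(t,x)|\bigr)\,\de x=\int_{\R^2}\Psi\bigl(|\omega_0^\e(x)|\bigr)\,\de x\le\int_{\R^2}\Psi(|\omega_0(x)|)\,\de x ,
$$
uniformly in $\e\in(0,1)$ and $t\in[0,T]$. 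Integrating in time and applying the converse of the de la Vallée-Poussin criterion gives the uniform integrability of $\{\varphi_\e*\bar\omega^\e\}$ in $L^1((0,T)\times\R^2)$; the same argument applied directly to $\{\omega_0^\e\}$ shows that this family is uniformly integrable in $L^1(\R^2)$, a fact I will use below.

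For the tightness, fix a compact set $K_0\subset\R^2$ containing the supports of all the $\omega_0^\e$ (this exists since $\omega_0$ is compactly supported and $\delta(\e)<1$). Since $\varphi_\e$ is supported in a ball of radius $\e<1$, for $R>0$ we have
$$
\int_{\{|x|>R+1\}}|\varphi_\e*\bar\omega^\e(t,x)|\,\de x\le\int_{\{|y|>R\}}|\bar\omega^\e(t,y)|\,\de y=\int_{\{z\in K_0:\,|X^\e(t,z)|>R\}}|\omega_0^\e(z)|\,\de z ,
$$
where in the last equality I used the measure-preserving change of variables $y=X^\e(t,z)$ together with $\mathrm{supp}\,\omega_0^\e\subset K_0$. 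The crucial estimate is a uniform bound on the first moment of the flow on $K_0$: from $|X^\e(t,z)|\le|z|+\int_0^t|v^\e(s,X^\e(s,z))|\,\de s$, integrating over $z\in K_0$, using Fubini and the measure-preserving change of variables $w=X^\e(s,z)$, one gets
$$
\int_{K_0}|X^\e(t,z)|\,\de z\le\int_{K_0}|z|\,\de z+\int_0^T\|v^\e(s,\cdot)\|_{L^1(X^\e(s,K_0))}\,\de s .
$$
Now $v^\e=K*\omega^\e$ with $\|\omega^\e(s,\cdot)\|_{L^1}\le\|\omega_0\|_{L^1}$ by Lemma \ref{lem:bound}, $\mathscr{L}^2(X^\e(s,K_0))=\mathscr{L}^2(K_0)$, and $|K(x)|=\tfrac{1}{2\pi|x|}$ is radially decreasing with $\int_{B_r}|K|=r$; hence, by rearrangement, $\|v^\e(s,\cdot)\|_{L^1(A)}\le\|\omega_0\|_{L^1}\sup_y\int_A|K(x-y)|\,\de x\le\|\omega_0\|_{L^1}\,r_0$ for every measurable $A$ with $\mathscr{L}^2(A)=\mathscr{L}^2(K_0)$, where $\pi r_0^2=\mathscr{L}^2(K_0)$. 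Thus $\int_{K_0}|X^\e(t,z)|\,\de z\le M_1$ with $M_1$ independent of $\e$ and $t$. By Chebyshev's inequality $\mathscr{L}^2(\{z\in K_0:|X^\e(t,z)|>R\})\le M_1/R$, and since $\{\omega_0^\e\}$ is uniformly integrable we can, given $\eta>0$, choose $R$ so large that the right-hand side of the first display is $<\eta$ for all $\e$ and $t$. Integrating in $t$ yields the tightness of $\{\varphi_\e*\bar\omega^\e\}$, and hence the equi-integrability of $\{\omega^\e\}$.

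I expect the tightness to be the main obstacle. Although $\|v^\e\|_{L^\infty}$ blows up like $\e^{-2}$ as $\e\to 0$, so that a single trajectory could a priori travel a distance of order $\e^{-2}T$, the $L^1$-in-space control of $v^\e$ on sets of bounded measure — which is uniform in $\e$, since it only uses the uniform $L^1$ bound on $\omega^\e$ and the local integrability of $K$ — keeps the bulk of the transported mass inside a fixed ball; turning this into a quantitative tightness statement via a first-moment bound for the flow is the key point. The absence of concentration, by contrast, is a fairly direct consequence of Lemma \ref{lem:est}, equimeasurability, and Jensen's inequality.
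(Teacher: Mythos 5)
Your proof is correct and follows the same three-tier skeleton as the paper's (pass from $\omega^\e$ to $\varphi_\e*\bar\omega^\e$ to $\bar\omega^\e$, with de la Vall\'ee-Poussin and Jensen handling the concentration part), but it diverges in two places worth noting. For the tightness of $\bar\omega^\e$, the paper simply invokes the abstract superlevel estimate for regular Lagrangian flows (Lemma \ref{lem:lag}, from \cite{CDL}), whose applicability rests on the uniform (R1)-decomposition bounds for $v^\e=K*\omega^\e$; you instead prove the needed special case from scratch via a first-moment bound $\int_{K_0}|X^\e(t,z)|\,\de z\leq M_1$, obtained from the rearrangement estimate $\sup_y\int_A|K(x-y)|\,\de x\leq r_0$ and the measure-preserving change of variables along the flow, followed by Chebyshev. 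Your argument is self-contained and quantitative where the paper's is a citation; both are valid and rely on the same underlying mechanism (uniform $L^1$-on-bounded-sets control of $v^\e$ despite the $\e^{-2}$ blow-up of $\|v^\e\|_{L^\infty}$). The second difference is in the transfer from $\varphi_\e*\bar\omega^\e$ back to $\omega^\e$: you dispatch it by saying that equi-integrability is stable under perturbations tending to $0$ in $L^1$. Along a sequence $\e_n\to 0$ — which is all that is ever used downstream, since the lemma feeds into subsequence extractions — this is airtight. For the full one-parameter family $\{\omega^\e\}_{0<\e<1}$ as literally stated, the range $\e\geq\bar\e$ is a continuum and the "finitely many exceptional terms" argument does not apply; the paper closes this explicitly in its Step 3 by showing that for $\e\geq\bar\e$ the blob centers $X_i^\e(t)$ stay in a fixed ball, so that $\omega^\e$ is supported in a fixed compact set there. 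You could patch this identically, or note that for $\e\geq\bar\e$ the difference $\omega^\e-\varphi_\e*\bar\omega^\e$ is uniformly bounded in $L^\infty$ by Lemma \ref{lem:est} (handling small sets) and uniformly compactly supported (handling infinity). This is a minor uniformity point, not a flaw in the substance of the argument.
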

\begin{proof}
We divide the proof in several steps.\\
\\
\underline{Step 1} \hspace{0.5cm} The sequence $\{ \bar{\omega}^\e \}_\e$ is equi-integrable.\\
\\
We start by proving the equi-integrability of the sequence $\bar{\omega}^\e$ on small sets; we have that
$$
\int_A|\bar{\omega}^\e(t,x)|\de x=\int_A|\omega_0^\e((X^\e)^{-1}(t,\cdot)(x))|\de x=\int_{X^\e(t,A)}|\omega_0^\e(y)|\de y.
$$
Since $v^\e$ is divergence-free we have that $\mathscr{L}^2(X^\e(t,A))=\mathscr{L}^2(A)$, so the measure of the set $X^\e(t,A)$ is indipendent from $t$ and $\e$ and then the equi-integrability of $\omega_0^\e$ gives the result.\\
We move now to the proof of the equi-integrability at infinity; we have that
\begin{align*}
\int_{\R^2\setminus B_r} & |\bar{\omega}^\e(t,x)|\de x=\int_{\R^2\setminus B_r}|\omega_0^\e((X^\e)^{-1}(t,x))|\de x = \\
& =\int_{ \{ y\in B_R:|X^\e(t,y)|>r \} } \ | \omega_0^\e(y)|\de y,
\end{align*}
where $\mathrm{supp}\ \omega_0^\e\subseteq B_R$. By Lemma \ref{lem:lag}, the measure of the set 
$$
\{ y\in B_R:|X^\e(t,y)|>r \}
$$ 
can be made arbitrary small for $r$ big enough, indipendently from $\e$ and $t$. Then by the equi-integrability of $\omega_0^\e$ the claim of the first step follows.\\
\\
\underline{Step 2} \hspace{0.5cm} The sequence $\{ \varphi_\e*\bar{\omega}^\e \}_\e$ is equi-integrable.\\
\\
We start by proving the equi-integrability of $\varphi_\e*\bar{\omega}^\e$ on small sets. Since the initial datum $\omega_0^\e$ has compact support (uniformly in $\e$) and converges strongly, therefore weakly, to $\omega_0$ in $L^1$, De la Vall\'e-Poussin's theorem provides the existence of a function $G$ positive, increasing and superlinear such that
$$
\sup_\e \int_{\R^2} G(|\omega_0^\e(x)|)\de x<\infty.
$$
Then, for $\varphi_\e*\bar{\omega}^\e$ we have that
\begin{align}
\int_{\R^2} G(| \varphi_\e*\bar{\omega}^\e|(t,x))\de x &= 
\int_{\R^2} G\left( \mid \int\varphi_\e(x-y)\bar{\omega}^\e(t,y) \de y \mid\right)\de x  \label{proof:equi1}\\
& \leq \int_{\R^2} G\left(\int\varphi_\e(x-y)| \bar{\omega}^\e(t,y)| \de y\right) \de x \label{proof:equi2}\\
& \leq \int\int\varphi_\e(x-y)G(| \bar{\omega}^\e(t,y)|) \de y\de x  \label{proof:equi3}\\
& = \int_{\R^2} G(| \bar{\omega}^\e(t,y)|)\int\varphi_\e(x-y)\de x\de y \nonumber\\
& = \int_{\R^2} G(| \bar{\omega}^\e(t,y)|)\de y. \label{proof:equi4}
\end{align}
Note that in \eqref{proof:equi2} we have taken the modulus inside the integral and used that $G$ is increasing, while in \eqref{proof:equi3} we used Jensen's inequality since $G$ is convex and $\varphi_\e(x-\cdot) \de y$ is a probability measure. Multiplying \eqref{eq:baromega} by $G'(|\bar{\omega}^\e|)$, integrating in space and using the divergence-free condition of $v^\e$, from the equi-integrability of $\omega_0^\e$ it follows that
\begin{equation}\label{proof:equi5}
\sup_\e \sup_{t\in[0,T]}\int_{\R^2} G(|\bar{\omega}^\e(t,x)|)\de x\leq\sup_\e \int_{\R^2} G(|\omega_0^\e(x)|)\de x<\infty.
\end{equation}
Then, taking the supremum in time and in $\e$ in \eqref{proof:equi1} and estimating \eqref{proof:equi4} with \eqref{proof:equi5} shows the equi-integrability on small sets. The equi-integrability at infinity is an immediate consequence of that of $\bar{\omega}^\e$.\\
\\
\underline{Step 3} \hspace{0.5cm} The sequence $\{ \omega^\e\}_\e$ is equi-integrable.\\
\\
We start by proving equi-integrability on small sets. We can compute
\begin{align*}
\int_A |\omega^\e(t,x)|\de x & \leq \int_A |\omega^\e(t,x)-\varphi_\e*\bar{\omega}^\e(t,x)|\de x+\int_A |\varphi_\e*\bar{\omega}^\e(t,x)| \de x \\ & \leq \|\omega^\e-\varphi_\e*\bar{\omega}^\e\|_{L^\infty}\mathscr{L}^2(A)+\int_A |\varphi_\e*\bar{\omega}^\e(t,x)|\de x.
\end{align*}
Fix $\eta>0$. The first term can be estimate using $\|\omega^\e-\varphi_\e*\bar{\omega}^\e\|_\infty\leq C \e\leq C$ and choosing $\gamma_1<\displaystyle\frac{\eta}{2C}$ so that for $\mathscr{L}^2(A)< \gamma_1$
$$
\|\omega^\e-\varphi_\e*\bar{\omega}^\e\|_\infty\mathscr{L}^2(A)\leq C\gamma_1\leq\frac{\eta}{2}.
$$
For the second term we use the equi-integrability of $\varphi_\e*\bar{\omega}^\e$. There exists $\gamma_2$ such that
$$
\int_A |\varphi_\e*\bar{\omega}^\e|\de x<\frac{\eta}{2},
$$
if $\mathscr{L}^2(A)\leq\gamma_2$.
So taking $\gamma=\min(\gamma_1,\gamma_2)$, assuming $\mathscr{L}^2(A)\leq\gamma$ and then taking the supremum in time, the equi-integrability on small sets is proven.\\
We prove now the equi-integrability at infinity. Fix $\eta>0$ and decompose
$$
\int_{B_R^c} |\omega^\e(t,x)|\de x\leq \int_{B_R^c} |\omega^\e(t,x)-\varphi_\e*\bar{\omega}^\e(t,x)|\de x+\int_{B_R^c} |\varphi*\bar{\omega}^\e(t,x)|\de x.
$$
Since $\varphi*\bar{\omega}^\e$ is equi-integrable there exists $R_1>0$ such that for every $R>R_1$
$$
\int_{B_R^c} |\varphi_\e*\bar{\omega}^\e(t,x)|\de x\leq \frac{\eta}{2},
$$
and by \eqref{es:1}, if we consider $\e\leq\bar{\e}:=\sqrt[3]{\frac{\eta}{2C}}$ we obtain
$$
\int_{B_R^c} |\omega^\e(t,x)|\de x\leq C\e^3+\frac{\eta}{2}\leq \eta.
$$
For $\e>\bar{\e}$ we do not use estimate \eqref{es:1} but we focus our attention on the flows $X_i^\e$. From the definition of $\omega^\e$ we know
\begin{equation}
\int_{B_R^c} |\omega^\e(t,x)|\de x\leq \sum_i |\Gamma^\e_i|\int_{B^c_R} \varphi_\e(x-X^\e_i(t))\de x.
\label{eq:equiinfty}
\end{equation}
For the flows $X^\e_i(t)$ we have that for a given finite time $T$
\begin{equation}\label{proof:Linftyboundequi}
|X^\e_i(t)|\leq |\alpha_i|+\int_0^T |v^\e(\tau,X^\e_i(\tau))|\de\tau.
\end{equation}
Since $\alpha_i\in \mathrm{supp} \ \omega_0^\e$ which is compact, we have $|\alpha_i|\leq \tilde{R}$. Decompose the Biot-Savart Kernel as $K=K\chi_{B_1}+K\chi_{B_1^c}:=K_1+K_2$ where $K_1\in L^1$ and $K_2\in L^\infty$. Using Young inequality for convolutions we get
$$
\int_0^T |v^\e(\tau,X^\e_i(\tau))|\de\tau\leq T\left( \|K_1\|_{L^1}\|\omega^\e\|_{L^\infty}+\|K_2\|_{L^\infty} \|\omega_0\|_{L^1} \right)
$$
and
$$
|\omega^\e(t,x)|\leq \frac{1}{\e^2}\sum_i |\Gamma^\e_i|\leq \frac{1}{\bar{\e}^2}\|\omega_0\|_{L^1}.
$$
Then by \eqref{proof:Linftyboundequi} we have that
$$
|X^\e_i(t)|\leq \tilde{R}+T\left( \|K_1\|_{L^1}\frac{1}{\bar{\e}^2}+\|K_2\|_{L^\infty}\right) \|\omega_0\|_{L^1}.
$$
Defining $R_2>0$ as
$$
R_2=\tilde{R}+T\left( \|K_1\|_{L^1}\frac{1}{\bar{\e}^2}+\|K_2\|_\infty\right) \|\omega_0\|_{L^1}+2,
$$ 
we have that for $|x|>R_2$
$$
|x-X^\e_i(t)|\geq R_2-\tilde{R}-T\left( \|K_1\|_{L^1}\frac{1}{\bar{\e}^2}+\|K_2\|_{L^\infty}\right) \|\omega_0\|_{L^1}>1
$$
so that in \eqref{eq:equiinfty} we integrate out of the support of $\varphi_\e$ and the integral therefore vanishes.
Setting $R=\max(R_1,R_2)$ and taking the supremum in $\e$ and $t$ we have the result since $\bar{\e}$ depends only on $\eta$.
\end{proof}
If we assume in addition that the initial vorticity $\omega_0\in H^{-1}_{\mathrm{loc}}(\R^2)$, then the initial velocity $v_0$ is locally square integrable; this is the content of the following proposition.
\begin{prop}\label{prop:vL2loc}
Let $\omega_0\in L^1_c\cap H^{-1}_{\mathrm{loc}}(\R^2)$ and $v^\e$ defined as in \eqref{eq:av}. Then $v^\e\in L^\infty((0,T);L^2_{\mathrm{loc}}(\R^2))$ and
$$
\sup_{t\in[0,T]}\|v^\e(t,\cdot)\|_{L^2(B_R)}\leq C(R).
$$
\end{prop}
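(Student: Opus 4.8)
The plan is to reduce the uniform bound on $\|v^\e(t)\|_{L^2(B_R)}$ to a bound on a (renormalised, localised) kinetic energy of $v^\e$: by the two–dimensional structure of the equation this quantity is conserved up to the error term $F_\e$ and up to a flux term, and its value at $t=0$ is controlled precisely by the hypothesis $\omega_0\in H^{-1}_{\mathrm{loc}}$.

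Write $G(x)=\tfrac1{2\pi}\log|x|$, so that $K=\nabla^\perp G$, and set $\psi^\e:=G*\omega^\e$, so that $v^\e=\nabla^\perp\psi^\e$, $\Delta\psi^\e=\omega^\e$ and $\nabla\psi^\e=-(v^\e)^\perp$; set also $\psi_0^\e:=G*\omega_0^\e$, $v_0^\e:=K*\omega_0^\e$. The first step is the bound at the initial time. Since $\omega_0\in L^1_c\cap H^{-1}_{\mathrm{loc}}(\R^2)$, the function $\psi_0:=G*\omega_0$ is well defined and satisfies $\Delta\psi_0=\omega_0$ in $\mathcal{D}'(\R^2)$, so interior elliptic regularity gives $\psi_0\in H^1_{\mathrm{loc}}(\R^2)$ and hence $v_0=K*\omega_0=\nabla^\perp\psi_0\in L^2_{\mathrm{loc}}(\R^2)$. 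Since $\omega_0^\e=\omega_0*j_{\delta(\e)}$ is supported in a fixed compact set and mollification is bounded on $H^{-1}$ and on $L^2_{\mathrm{loc}}$, the families $\{\omega_0^\e\}$, $\{v_0^\e\}=\{v_0*j_{\delta(\e)}\}$ and $\{\psi_0^\e\}=\{\psi_0*j_{\delta(\e)}\}$ are bounded respectively in $H^{-1}_{\mathrm{loc}}$, in $L^2_{\mathrm{loc}}$ and in $H^1_{\mathrm{loc}}$ (for the last one one also uses $G\in L^2_{\mathrm{loc}}$ and $\|\omega_0^\e\|_{L^1}\le\|\omega_0\|_{L^1}$). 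Localising by a cutoff equal to $1$ on a ball containing all these supports and pairing $H^{-1}$ with $H^1$ gives $\sup_\e\big|\int_{\R^2}\omega_0^\e\,\psi_0^\e\,\de x\big|\le C$.

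Next I would write the almost–conservation law: multiplying \eqref{eq:wvb} by $\psi^\e$ and integrating — all manipulations being licit because, for fixed $\e$, $\omega^\e(t,\cdot)$ is smooth and compactly supported while $\psi^\e$ grows only logarithmically — and using $\dive v^\e=0$ together with $v^\e\cdot\nabla\psi^\e=-v^\e\cdot(v^\e)^\perp=0$, the transport term drops out and one obtains
$$
\frac{\de}{\de t}\int_{\R^2}\omega^\e\psi^\e\,\de x=2\int_{\R^2}\psi^\e E_\e\,\de x=2\int_{\R^2}(v^\e)^\perp\cdot F_\e\,\de x,
$$
the second equality coming from $E_\e=\dive F_\e$ (see \eqref{eq:fe}) and integration by parts. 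The core of the argument is that $\int_0^T\big|\int_{\R^2}(v^\e)^\perp\cdot F_\e\,\de x\big|\,\de t$ stays bounded as $\e\to0$: the crude estimate $\|v^\e\|_{L^\infty}\|F_\e\|_{L^1}$ is useless, $\|v^\e\|_{L^\infty}$ blowing up by \eqref{eq:epsvreg}. Instead one writes $\int_{\R^2}(v^\e)^\perp\cdot F_\e\,\de x=\sum_i\Gamma_i^\e\int_{\R^2}(v^\e(x))^\perp\cdot[v^\e(x)-v^\e(X_i^\e(t))]\,\varphi_\e(x-X_i^\e(t))\,\de x$ and Taylor–expands $v^\e$ around each $X_i^\e(t)$: the contribution linear in the blob radius vanishes because $\dive v^\e=0$, the quadratic one because $\varphi$ has vanishing first moment, and the remainder is controlled by the size of the derivatives of $v^\e$ on the (small) supports of the blobs together with the bounds on $F_\e$ of Lemma \ref{lem:fe}. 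This is exactly where the precise choices $\delta(\e)=\e^\sigma$ and $h(\e)$ of Lemma \ref{lem:fe} are used. Combining with the initial bound, $\sup_{t\in[0,T]}\big|\int_{\R^2}\omega^\e(t)\,\psi^\e(t)\,\de x\big|\le C$ uniformly in $\e$.

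Finally I would transfer this to $\|v^\e(t)\|_{L^2(B_R)}$. Choosing $\zeta\in C^\infty_c(\R^2)$ with $\zeta\equiv1$ on $B_{2R}$ and $\mathrm{supp}\,\zeta\subset B_{4R}$, one splits $v^\e=K*(\zeta\omega^\e)+K*((1-\zeta)\omega^\e)$; the second summand is bounded on $B_R$ by $\tfrac1{2\pi R}\|\omega_0\|_{L^1}$, the kernel being bounded at distances $\ge R$. It then remains to bound $\|K*(\zeta\omega^\e)\|_{L^2(B_R)}$, whose square equals $\int_{\R^2}\int_{\R^2}(\zeta\omega^\e)(y)(\zeta\omega^\e)(z)\big[\int_{B_R}K(x-y)\cdot K(x-z)\,\de x\big]\,\de y\,\de z$, the inner kernel being equal, on $\mathrm{supp}\,\zeta\times\mathrm{supp}\,\zeta$, to $-G(y-z)$ plus a function bounded on that fixed compact set; the leading contribution is thus a localisation of $\int_{\R^2}\omega^\e\psi^\e\,\de x$, which the previous step bounds, while the terms produced by replacing $\psi^\e$ by $G*(\zeta\omega^\e)$ — which only involve the part of $\omega^\e$ outside $B_{2R}$ — are handled through $\|\omega^\e\|_{L^1}\le\|\omega_0\|_{L^1}$, the equi–integrability of $\omega^\e$ (Lemma \ref{lem:equi}) and the estimates of Section~3. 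The expected main obstacle is the uniform control of the error term $\int_0^T\int_{\R^2}(v^\e)^\perp\cdot F_\e\,\de x\,\de t$: the two exact cancellations (from incompressibility and from the symmetry of $\varphi$) are what make it finite, and they are a genuine consequence of the rigid–translation structure of the vortex blobs; the transfer step is a secondary difficulty, delicate precisely because the support of $\omega^\e$ need not be bounded uniformly in $\e$.
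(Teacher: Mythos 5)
The conserved quantity you choose is sound: the identity $\frac{\de}{\de t}\int\omega^\e\psi^\e\de x=2\int(v^\e)^\perp\cdot F_\e\de x$ is correct (the transport term drops out exactly as you say), and your bound on $\int\omega_0^\e\psi_0^\e$ at time zero via the $H^{-1}_{\mathrm{loc}}$--$H^1_{\mathrm{loc}}$ pairing is fine. The gap is exactly where you place it, and it is not closed by the two cancellations you invoke. After the linear term (killed by the vanishing first moment of $\varphi$) and the quadratic term (killed pointwise by $w^\perp\cdot w=0$) are removed, the remainder per blob is of size $\e^2\,\|v^\e\|_{L^\infty}\|\nabla^2 v^\e\|_{L^\infty}$, and the only uniform bounds available are $\|v^\e\|_{L^\infty}\lesssim\e^{-1}$ and $\|\nabla^2v^\e\|_{L^\infty}\gtrsim\e^{-3}$; summing against $\sum_i|\Gamma_i^\e|\le\|\omega_0\|_{L^1}$ leaves a divergent $\e^{-2}$. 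You cannot fall back on Cauchy--Schwarz, $|\int(v^\e)^\perp\cdot F_\e|\le\|v^\e\|_{L^2}\|F_\e\|_{L^2}$, because when $\int\omega_0\neq0$ the field $v^\e$ is \emph{not} globally square integrable (Proposition \ref{prop:dpm}); and if you subtract the circulation part to restore global square integrability, the quantity you need to control is precisely the one you are trying to prove bounded. So the central estimate of your scheme is asserted, not proved, and with the stated tools it fails. Your transfer step has a secondary issue of the same flavour: the kernel $\int_{B_R}K(x-y)\cdot K(x-z)\de x$ differs from $-G(y-z)$ by a boundary term that is only logarithmically bounded as $y,z$ approach $\partial B_R$, and pairing an unbounded kernel against $\omega^\e\otimes\omega^\e$, which is merely $L^1\times L^1$, needs an argument.

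For comparison, the paper sidesteps both difficulties by working with the velocity rather than the stream function. It subtracts a fixed smooth radial steady solution $\bar v=K*(\Gamma\varphi)$ carrying the total circulation, so that $\tilde v^\e=v^\e-\bar v$ is genuinely in $L^2(\R^2)$; then a plain energy estimate on the equation for $\tilde v^\e$ gives
$\frac{\de}{\de t}\|\tilde v^\e\|_{L^2}\le\|\nabla\bar v\|_{L^\infty}\|\tilde v^\e\|_{L^2}+\|K*\dive F_\e\|_{L^2}$, and the error is handled at one stroke by the $L^2$-boundedness of $F\mapsto K*\dive F$ together with the $L^2$ bound on $F_\e$ from Lemma \ref{lem:fe} --- no re-examination of the blob structure is needed. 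The $H^{-1}_{\mathrm{loc}}$ hypothesis enters only to bound $\|\tilde v^\e_0\|_{L^2}$, via the discretization error and the $|x|^{-2}$ decay of the zero-circulation part at infinity. If you want to rescue your route, the realistic fix is to estimate $\int(v^\e)^\perp\cdot F_\e\de x$ by splitting $v^\e=\tilde v^\e+\bar v$ and using $\|\tilde v^\e\|_{L^2}\|F_\e\|_{L^2}+\|\bar v\|_{L^\infty}\|F_\e\|_{L^1}$ inside a Gr\"onwall loop --- but at that point you have reproduced the paper's argument and the pseudo-energy is no longer doing any work.
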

\begin{proof}
First of all, we decompose $v^\e(t,x)=\tilde{v}^\e(t,x)+\bar{v}(x)$ where $\bar{v}$ is a smooth steady solution of the 2D Euler equations and $\tilde{v}^\e(t,x)$ is at each time in $L^2$ with zero total circulation. To do this, consider the same mollifier $\varphi$ as in the definition of $\omega^\e$ in \eqref{def:vb} and set
$$
\Gamma=-\int_{\R^2}\omega_0(x)\de x,\hspace{0.5cm}\bar{\omega}(x)=\Gamma\varphi(x),
$$
$$
\bar{v}(x)=K*\bar{\omega},\hspace{0.5cm}\tilde{v}^\e=v^\e-\bar{v},\hspace{0.5cm}\tilde{\omega}^\e=\omega^\e-\bar{\omega}.
$$
Then, $\tilde{v}^\e$ solves the following equation
\begin{equation}\label{eq:vtilde}
\partial_t\tilde{v}^\e+\left(v^\e\cdot\nabla\right)\tilde{v}^\e+\left(\tilde{v}^\e\cdot\nabla\right)\bar{v}+\nabla p^\e=K*\left(\dive F_\e\right).
\end{equation}
Multiplying \eqref{eq:vtilde} by $\tilde{v}^\e$ and integrating over $\R^2$ we have
\begin{equation}
\frac{1}{2}\frac{\de}{\de t}\|\tilde{v}^\e(t,\cdot)\|_{L^2}^2\leq \|\nabla\bar{v}\|_{L^\infty}\|\tilde{v}^\e(t,\cdot)\|_{L^2}^2+\|K*\left(\dive F_\e\right)\|_{L^2}\|\tilde{v}^\e(t,\cdot)\|_{L^2}.
\end{equation}
Since $F\mapsto(K*\dive F)$ is a bounded operator in $L^2$ we get
$$
\|\tilde{v}^\e(t,\cdot)\|_{L^2}\leq C(T)\|\tilde{v}^\e_0\|_{L^2}, \hspace{1cm}\mbox{for all }0\leq t\leq T. 
$$
In order to conclude, it is enough to prove that $\|\tilde{v}^\e_0\|_{L^2}$ is finite. Note that
$$
\tilde{v}^\e_0(x)=\sum_{i=1}^{N(\e)}\Gamma^\e_iK_\e(x-\alpha_i)-\Gamma K*\varphi(x).
$$
The previous sum is a discretization of the integral
$$
\int_{\R^2}K_\e(x-\alpha)\omega_0^\e(\alpha)\de \alpha=(K*\varphi_\e)*(j_\delta*\omega_0),
$$
which is by hypothesis bounded in $L^2$. Since in the definition of $\tilde{v}_0$ the kernel $\varphi$ is chosen to be the same as in \eqref{eq:vb}, the discretization error can be pointwise bounded by $h\|\omega_0\|_{L^1}\|\nabla K_\e\|_{L^\infty}=Ch\e^{-2}$, which is small by our choice of $h(\e)$. It follows that $v_0^\e$, and therefore $\tilde{v}^\e_0$ is uniformly bounded in $L^2(B_{2R})$ where $R>0$ is such that $\mathrm{supp}\,\omega_0^\e\subseteq B_R$. For $|x|>2R$, $K_\e$ is just $K$ and then
$$
\tilde{v}^\e_0(x)=\sum_{i=1}^{N(\e)}\Gamma^\e_i\left( K(x-\alpha_i)-K(x)\right),
$$
and it is easy to see that it is bounded by
$$
\sum_{i=1}^{N(\e)}C|x|^{-2}|\Gamma^\e_i|\leq \|\omega_0\|_{L^1}|x|^{-2},
$$
thus it is bounded in $L^2(B^c_{2R})$.
\end{proof}

The equi-integrability of the vortex-blob vorticity $\omega^\e$ guarantees the phenomenon of concentration-cancellations, see \cite{VW}. This fact together with the consistency of the method implies the existence of VB-solutions in the case of $L^1_c$ initial vorticity. In particular with Lemma \ref{lem:equi} we improve the result of \cite{Be} to the case $\omega_0\in L^1_c\cap H^{-1}_{\mathrm{loc}}(\R^2)$ and this is the content of the following theorem.
\begin{thm}
Let $v_0\in L^2_{\mathrm{loc}}(\R^2)$ and assume that the vorticity $\curl v_0=\omega_0\in L^1_c(\R^2)\cap H^{-1}_{\mathrm{loc}}(\R^2)$. Let $\omega^\e$ be given by the vortex-blob approximation with the parameters chosen so that $\delta(\e)=\e^\sigma$ for some $0<\sigma<1/7$, and $h(\e)\leq C \e^6 \exp(-C_0\e^{-2})$ for certain $C_0, C$. Then there exists a subsequence of $v^\e$ which converges strongly in $L^q((0,T);L^q_{\mathrm{loc}}(\R^2))$ for any $1\leq q<2$ and weakly in $L^\infty((0,T);L^2_{\mathrm{loc}}(\R^2))$ to a classical weak solution $v$ of the Euler equations with initial velocity $v_0$.
\end{thm}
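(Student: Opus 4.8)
The strategy is to recognise the vortex-blob sequence as a DiPerna--Majda approximate solution sequence with uniformly bounded and \emph{equi-integrable} vorticity, so that the concentration-cancellation theory of \cite{DPM} (see also Vecchi--Wu \cite{VW}) forces a genuine weak solution in the limit; the decisive new ingredient for the borderline case $p=1$ is the equi-integrability established in Lemma \ref{lem:equi}. First I would collect the uniform estimates. Since $\omega_0\in L^1_c(\R^2)$, Lemma \ref{lem:bound} gives that $\{\omega^\e\}$ is bounded in $L^\infty((0,T);L^1(\R^2))$, and Lemma \ref{lem:equi} that it is equi-integrable in $L^1((0,T)\times\R^2)$; hence, by the Dunford--Pettis theorem, along a subsequence $\omega^\e\weakto\omega$ in $L^1((0,T)\times\R^2)$. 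Since in addition $\omega_0\in H^{-1}_{\mathrm{loc}}(\R^2)$, Proposition \ref{prop:vL2loc} yields $\sup_{t\in[0,T]}\|v^\e(t,\cdot)\|_{L^2(B_R)}\le C(R)$, so along a further subsequence $v^\e\weaktos v$ in $L^\infty((0,T);L^2_{\mathrm{loc}}(\R^2))$, and passing to the limit in $\omega^\e=\curl v^\e$ gives $\omega=\curl v$. Finally, with the prescribed parameters $\delta(\e)=\e^\sigma$, $0<\sigma<1/7$, and $h(\e)\le C\e^6\exp(-C_0\e^{-2})$, Lemma \ref{lem:fe} gives $\sup_{t\in[0,T]}\|F_\e(t,\cdot)\|_{L^1}\to 0$; since $E_\e=\dive F_\e$, an integration by parts turns the pairing of $K*E_\e$ with any fixed $\phi\in C^\infty_c([0,T)\times\R^2)$ into a pairing of $F_\e$ with a fixed $L^\infty$ field built from $\phi$, whence $\int_0^T\!\int_{\R^2}(K*E_\e)\cdot\phi\,\de x\,\de t\to 0$ --- this is the consistency of the scheme.

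The heart of the argument is the passage to the limit in the quadratic term $v^\e\otimes v^\e$. In the borderline case $p=1$ the kinetic energy density $|v^\e|^2$ is only bounded in $L^\infty((0,T);L^1_{\mathrm{loc}}(\R^2))$ and need not be equi-integrable, so a concentration defect could survive and leave us only with a measure-valued solution. This is exactly where the equi-integrability of $\{\omega^\e\}$ (Lemma \ref{lem:equi}) is invoked: it is the hypothesis under which the concentration-cancellation mechanism of \cite{DPM, VW} applies to $v^\e=K*\omega^\e$, ruling out the concentration, so that $v^\e\to v$ strongly in $L^q((0,T);L^q_{\mathrm{loc}}(\R^2))$ for every $1\le q<2$ and $v^\e\otimes v^\e\to v\otimes v$ in $\mathcal{D}'((0,T)\times\R^2)$. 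I expect this to be the main obstacle; for $p>1$ --- Beale's setting --- one instead has $v^\e$ bounded in $L^\infty((0,T);L^s_{\mathrm{loc}}(\R^2))$ for some $s>2$, so $|v^\e|^2$ is automatically equi-integrable and the difficulty is absent.

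It then remains to pass to the limit in the weak formulation. For $\phi\in C^\infty_c([0,T)\times\R^2)$ with $\dive\phi=0$, testing \eqref{eq:vbv} against $\phi$ removes the pressure and yields
\begin{equation*}
\int_0^T\!\!\int_{\R^2} v^\e\cdot\partial_t\phi\,\de x\,\de t+\int_0^T\!\!\int_{\R^2} v^\e\otimes v^\e:\nabla\phi\,\de x\,\de t+\int_{\R^2} v^\e(0,\cdot)\cdot\phi(0,\cdot)\,\de x=-\int_0^T\!\!\int_{\R^2}(K*E_\e)\cdot\phi\,\de x\,\de t .
\end{equation*}
The first term converges by the weak-$*$ convergence of $v^\e$, the second by the previous step, the right-hand side vanishes by the consistency above; moreover $v^\e(0,\cdot)=v_0^\e\to v_0$ in $L^2_{\mathrm{loc}}(\R^2)$ exactly as in the proof of Proposition \ref{prop:vL2loc} (since $j_{\delta}*\varphi_\e*\omega_0\to\omega_0$ in $L^1$ with uniformly bounded support, $K*(j_{\delta}*\varphi_\e*\omega_0)=j_{\delta}*\varphi_\e*v_0$ converges in $L^2_{\mathrm{loc}}(\R^2)$ because $v_0\in L^2_{\mathrm{loc}}(\R^2)$, and the discretization error is of order $h\e^{-2}\to 0$), so the third term converges to $\int_{\R^2}v_0\cdot\phi(0,\cdot)\,\de x$. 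Since $\dive v=0$ passes trivially to the limit, $v$ is a weak solution of \eqref{eq:eu} with initial velocity $v_0$; being the weak-$*$ limit in $L^\infty((0,T);L^2_{\mathrm{loc}}(\R^2))$ of a vortex-blob approximation with $v^\e(0,\cdot)\to v_0$ in $L^2_{\mathrm{loc}}(\R^2)$, it is a VB-solution, and this extends Beale's theorem \cite{Be} to the class $\omega_0\in L^1_c\cap H^{-1}_{\mathrm{loc}}(\R^2)$.
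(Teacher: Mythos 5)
Your proposal is correct and follows essentially the same route as the paper: uniform $L^1$ bounds plus the equi-integrability of Lemma \ref{lem:equi} and the local $L^2$ bound of Proposition \ref{prop:vL2loc} give the compactness, Lemma \ref{lem:fe} gives consistency, and the quadratic term is handled by the concentration-cancellation argument of Delort/Vecchi--Wu, for which the equi-integrability of $\omega^\e$ is precisely the needed hypothesis (the paper makes this explicit via the symmetrized kernel $H_\varphi$). The only minor imprecision is that the strong convergence $v^\e\to v$ in $L^q_{\mathrm{loc}}$ for $q<2$ is a standard compactness consequence of the uniform $L^2_{\mathrm{loc}}$ bound and the temporal regularity (Theorem 2 in \cite{Be}), not itself an output of concentration-cancellation, which is instead what justifies $v^\e\otimes v^\e\to v\otimes v$ in $\mathcal{D}'$; since you state that convergence separately, the argument stands.
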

\begin{proof}
We just sketch the proof since it follows the proof of \cite{Be}, \cite{De} and \cite{VW}.\\
\\
\underline{Step 1} \hspace{1cm}\emph{Compactness}.\\
\\
Since $\omega_0\in L^1_c\cap H^{-1}_{\mathrm{loc}}(\R^2)\subset\mathcal{M}\cap H^{-1}_{\mathrm{loc}}(\R^2)$, by Theorem 2 in \cite{Be} we have the existence of $v\in L^\infty((0,T);L^2_{\mathrm{loc}}(\R^2))$ such that
$$
v^\e\weaktos v \hspace{0.5cm}\mbox{in }L^\infty((0,T);L^2_{\mathrm{loc}}(\R^2)),
$$
and for every $1\leq q<2$ we have the strong convergence
$$
v^\e\to v \hspace{0.5cm} \mbox{in } L^q((0,T);L^q_{\mathrm{loc}}(\R^2)).
$$
Moreover, for every test function $\Phi\in C^\infty_c((0,T)\times\R^2)$ with $\dive\Phi=0$ using Lemma \ref{lem:fe} we have that
$$
\lim_{\e\to 0}\int_0^T\int_{\R^2}\left(\partial_t v^\e\Phi+v^\e\otimes v^\e:\nabla\Phi  \right)\de x \de t =0.
$$
\\
\underline{Step 2}\hspace{1cm}\emph{Convergence}.\\
\\
In order to conclude we have to prove the convergence of the non-linear term
$$
\lim_{\e\to 0}\int_0^T\int_{\R^2}v^\e\otimes v^\e:\nabla\Phi  \de x \de t =\int_0^T\int_{\R^2}v\otimes v:\nabla\Phi  \de x \de t .
$$
By the special structure of the non-linearity in two dimension, it is sufficient to prove the following convergence, see \cite{De, M}
$$
\lim_{\e\to 0}\int_0^T\int_{\R^2}v^\e_1(t,x)v^\e_2(t,x)\psi(t)\varphi(x)\de x \de t=\int_0^T\int_{\R^2}v_1(t,x)v_2(t,x)\psi(t)\varphi(x)\de x \de t,
$$
for any $\psi\in C^\infty_c(0,T)$ and $\varphi\in C^\infty_c(\R^2)$. We rewrite the left hand side as 
\begin{align*}
\int_0^T\int_{\R^2}v^\e_1(t,x)v^\e_2(t,x)\psi(t)&\varphi(x)\de x \de t \\ &=\int_0^T\int_{\R^2}\int_{\R^2}\psi(t)\omega^\e(t,x)\omega^\e(t,y)H_\varphi(x,y)\de x \de y\de t,
\end{align*}
where 
$$
H_\varphi(x,y)=c\,\, \mathrm{p.v.}\int_{\R^2} \frac{x_1-z_1}{|x-z|^2}\frac{y_2-z_2}{|y-z|^2} \varphi(z)\de z,
$$
for some constant $c>0$. As shown in \cite[Proposition 1.2.3]{De}, the function $H_\varphi \in L^\infty(\R^2\times\R^2)$, is continuous outside the diagonal of $\R^2\times\R^2$ and goes to $0$ at infinity. Moreover, by Lemma \ref{lem:equi} we know that 
$$
\omega^\e\weaktos \omega \hspace{0.5cm}\mbox{in }L^\infty((0,T);L^1(\R^2)),
$$
and then following the proof of Theorem 1 in \cite{VW} it is not difficult to prove that
\begin{align*}
\lim_{\e\to 0}\int_0^T\int_{\R^2}\int_{\R^2}\psi(t)\omega^\e(t,x)&\omega^\e(t,y)H_\varphi(x,y)\de x \de y\de t\\ &=\int_0^T\int_{\R^2}\int_{\R^2}\psi(t)\omega(t,x)\omega(t,y)H_\varphi(x,y)\de x \de y\de t,
\end{align*}
which is enough to conclude.
\end{proof}

\medskip

\section{Convergence to Lagrangian solutions}
In this section we prove that VB-solutions satisfy the 2D Euler equations in the Lagrangian and renormalized sense. Let us start with the following lemma.
\begin{lem}\label{lem:Kcomp}
Let $K$ be the 2D Biot-Savart kernel and denote by $\tau_aK(x)=K(x-a)$. Then for any $1<r<2$ and all $a\in\R^2$
\begin{equation}\label{es:diffk}
\|\tau_aK-K\|_{L^r}\leq C(r)|a|^\alpha
\end{equation}
where $\alpha=2/r-1$. Moreover choosing $p,q$ such that
\begin{equation}\label{qp}
1+\frac{1}{q}-\frac{1}{p}> \frac{1}{2},
\end{equation}
if $\{ u^\e\}\subset L^p(\R^2)$ is uniformly bounded in $\e$, then the sequence $K*u^\e$ is relatively sequentally compact in $L^q_{\mathrm{loc}}(\R^2)$.
\end{lem}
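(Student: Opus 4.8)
The plan is to split the statement into two independent parts. For the quantitative estimate \eqref{es:diffk}, I would write $\|\tau_a K - K\|_{L^r(\R^2)}$ and treat the regions near the two singularities and far away separately. On the annulus-type region $\{|x| \le 2|a|\} \cup \{|x-a|\le 2|a|\}$ one simply bounds $\|\tau_a K - K\|_{L^r} \le \|\tau_a K\|_{L^r(\{|x-a|\le 2|a|\})} + \|K\|_{L^r(\{|x|\le 2|a|\})}$, and since $|K(x)| \le C|x|^{-1}$, a direct computation in polar coordinates gives $\int_{\{|x|\le 2|a|\}} |x|^{-r}\de x = C |a|^{2-r}$, which raised to the power $1/r$ is exactly $C|a|^{2/r - 1} = C|a|^\alpha$; the integrability $\int_0^{2|a|} \rho^{1-r}\de\rho < \infty$ uses precisely $r < 2$. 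On the far region $\{|x|>2|a|,\ |x-a|>2|a|\}$ one uses the mean value inequality $|K(x-a)-K(x)| \le |a|\sup_{[x-a,x]}|\nabla K| \le C|a| |x|^{-2}$ (valid there since the segment stays away from the origin), and $\int_{\{|x|>2|a|\}} |a|^r |x|^{-2r}\de x = C|a|^r |a|^{2-2r} = C|a|^{2-r}$, again giving $C|a|^\alpha$ after the $1/r$-th power; here integrability of $\rho^{1-2r}$ at infinity uses $r > 1$. Summing the two contributions yields \eqref{es:diffk}.

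For the compactness part, the natural route is a Fréchet–Kolmogorov argument combined with the estimate just proved. Given $\{u^\e\}$ bounded in $L^p(\R^2)$, I want to show $\{K * u^\e\}$ is precompact in $L^q_{\mathrm{loc}}$. First, uniform $L^q_{\mathrm{loc}}$-boundedness: decompose $K = K\chi_{B_1} + K\chi_{B_1^c} =: K_1 + K_2$; since $|K(x)|\le C|x|^{-1}$ one checks $K_1 \in L^s(\R^2)$ for every $s<2$ and $K_2 \in L^s(\R^2)$ for every $s>2$ — in particular one can pick $s$ with $1 + 1/q = 1/p + 1/s$ and $s<2$ for the $K_1$ piece (this is where \eqref{qp} enters, as it forces $1/s = 1 + 1/q - 1/p > 1/2$, i.e. $s < 2$), so Young's inequality gives $\|K_1 * u^\e\|_{L^q} \le \|K_1\|_{L^s}\|u^\e\|_{L^p} \le C$; for $K_2$ one uses $K_2 \in L^{p'}$ together with Hölder on balls, or splits $K_2$ further, to bound $\|K_2 * u^\e\|_{L^\infty}$ and hence $\|K_2 * u^\e\|_{L^q(B_R)}$. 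Second, equi-continuity of translations: for $h\in\R^2$,
\[
\|\tau_h(K*u^\e) - K*u^\e\|_{L^q(B_R)} = \|(\tau_h K - K)*u^\e\|_{L^q(B_R)} \le \|\tau_h K - K\|_{L^s}\|u^\e\|_{L^p},
\]
by Young's inequality with the same exponent $s<2$; by \eqref{es:diffk} applied with $r=s$ this is $\le C|h|^{2/s-1}\|u^\e\|_{L^p} \le C|h|^{2/s-1} \to 0$ as $|h|\to 0$, uniformly in $\e$. Equi-tightness on each fixed $B_R$ is automatic. The Riesz–Fréchet–Kolmogorov compactness criterion then gives relative compactness of $\{K*u^\e\}$ in $L^q(B_R)$ for each $R$, hence in $L^q_{\mathrm{loc}}(\R^2)$.

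The main obstacle is bookkeeping the exponents: one has to verify that the single condition \eqref{qp} simultaneously allows a choice of $s<2$ making both the Young inequality $L^s * L^p \hookrightarrow L^q$ work \emph{and} the estimate \eqref{es:diffk} applicable (which requires $1<s<2$). The lower bound $s>1$ must also be arranged; if $1 + 1/q - 1/p \ge 1$ one instead takes $s$ slightly less than $2$ and absorbs the remaining room into the local Hölder estimate, so a small case distinction on the size of $1+1/q-1/p$ may be cleanest. Everything else — the polar-coordinate integrals, the mean value bound on $\nabla K$ away from the origin, and the verification of the Fréchet–Kolmogorov hypotheses — is routine once the exponents are pinned down.
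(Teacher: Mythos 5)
Your proposal is correct and follows essentially the same route as the paper: the same two-region split (near the singularities via the direct $|x|^{1-r}$ polar integral, far away via the mean value bound $|a||x|^{-2}$) for \eqref{es:diffk}, and then Fr\'echet--Kolmogorov combined with Young's inequality at the exponent $s$ determined by $1+1/q=1/p+1/s$ for the compactness. You are in fact slightly more careful than the paper on two points --- the uniform $L^q_{\mathrm{loc}}$ bound needed for Fr\'echet--Kolmogorov and the fact that \eqref{qp} alone only forces $s<2$ and not $s>1$ --- both of which the paper's proof passes over.
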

\begin{proof}
We start by proving \eqref{es:diffk}. Fix $a\in \R^2$ with $a\neq 0$. For $|x|>2|a|$, we have for all $0\leq\theta\leq 1$, $|x+\theta a|>|x|-\theta|a|>\frac{|x|}{2}$, thus we have that
$$
|\tau_aK(x)-K(x)|\leq |a|\sup_{0\leq\theta\leq 1}|\nabla K(x+\theta a|\leq |a|\sup_{0\leq\theta\leq 1}\frac{C}{|x+\theta a|^2}\leq C\frac{|a|}{|x|^2}.
$$
Then we estimate
\begin{equation}\label{es:far}
\int_{|x|>2|a|}|\tau_aK(x)-K(x)|^r\de x\leq C\int_{|x|>2|a|}\frac{|a|^r}{|x|^2r}\de x=C(r)|a|^{2-r}.
\end{equation}
Next, for $|x|\leq 2|a|$ we have 
$$
|\tau_aK(x)-K(x)|\leq \frac{1}{|x+a|}+\frac{1}{|x|}
$$
and then
\begin{align}\label{es:near}
\int_{|x|\leq 2|a|}|\tau_aK(x)-K(x)|^r\de x &\leq \int_{|x|\leq 2|a|}\frac{1}{|x+a|^r}+\frac{1}{|x|^r}\de x \nonumber \\ &\leq 2\int_{|x|\leq 3|a|}\frac{1}{|x|^r}\de x =C(r)|a|^{2-r}.
\end{align}
Combining \eqref{es:far} and \eqref{es:near} we get \eqref{es:diffk}.\\
To prove the compactness we want to verify the hypotesis of the Fr\'echet-Kolmogorov theorem. Let $u^\e$ be a bounded sequence in $L^p(\R^2)$. We want to prove that
$$
\lim_{a\to 0}\|\tau_a (K*u^\e)-(K*u^\e)\|_{L^q}=0 \hspace{0.3cm}\mbox{uniformly in }\e.
$$
Thanks to the properties of the convolution, we have that
\begin{align*}
\|\tau_a (K*u^\e)-(K*u^\e)\|_{L^q} &=\|(\tau_a K-K)*u^\e\|_{L^q}\\ &\leq \|\tau_a K-K\|_{L^r}\|u^\e\|_{L^p} \\ &\leq C(r)|a|^{\frac{2}{r}-1}
\end{align*}
which concludes the proof since our choice of $p,q$ implies that $1<r<2$.
\end{proof}
We summarize in the following lemma the convergence of the vortex-blob method to VB-solutions.
\begin{lem}\label{lem:conv}
Let $v$ be a VB-solution and let $\{(\omega^\e,v^\e)\}_\e$ be the approximate vorticity and velocity constructed by the vortex-blob approximation as in the Definition \ref{def:VB}. Let $\omega_0\in L^1_c(\R^2)\cap H^{-1}_{\mathrm{loc}}(\R^2)$ or $\omega_0\in L^p_c(\R^2)$ for $p>1$, then there exists 
$$
\omega\in L^\infty((0,T);L^p(\R^2))
$$
such that up to subsequences the following hold true
\begin{itemize}
\item[(i)] if $p>1$, then $v$ satisfies (R2a) and
$$
v^\e\to v \hspace{1cm} in \ \ L^2((0,T);L^2_{\mathrm{loc}}(\R^2)),
$$
\item[(ii)] if $p=1$, then $v$ satisfies (R2b) and for every $1\leq q<2$
$$
v^\e\to v \hspace{1cm} in \ \ L^q((0,T);L^q_{\mathrm{loc}}(\R^2)),
$$
\item[(iii)] $\omega^\e\weaktos\omega \hspace{1cm} in \ \ L^\infty((0,T);L^p(\R^2))$.
\end{itemize}
\end{lem}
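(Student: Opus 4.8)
The plan is to fix once and for all an approximating family $(\omega^\e,v^\e)$ produced by the vortex-blob method which witnesses, as in Definition \ref{def:VB}, that $v$ is a VB-solution, and then to extract a single (not relabelled) subsequence along which (i)--(iii) hold simultaneously. I would start from item (iii). If $p>1$, Lemma \ref{lem:bound} bounds $\{\omega^\e\}$ in $L^\infty((0,T);L^p(\R^2))$ uniformly in $\e$, so Banach--Alaoglu gives a subsequence with $\omega^\e\weaktos\omega$ in that space, and weak-$*$ lower semicontinuity of the norm gives $\omega\in L^\infty((0,T);L^p(\R^2))$. If $p=1$, Lemma \ref{lem:bound} only controls $\{\omega^\e\}$ in $L^\infty((0,T);\mathcal{M}(\R^2))$, which does not exclude concentration; here the decisive point is the equi-integrability of $\{\omega^\e\}$ from Lemma \ref{lem:equi}, which together with the Dunford--Pettis theorem yields weak compactness in $L^1((0,T)\times\R^2)$, and the uniform-in-time $L^1$ bound then gives $\omega^\e\weaktos\omega$ in $L^\infty((0,T);L^1(\R^2))$. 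In either case, since $\curl v^\e=\omega^\e$ and $v^\e\weaktos v$, passing to the limit in the sense of distributions gives $\omega=\curl v$; and passing to the limit in the Biot--Savart law $v^\e=K*\omega^\e$ (a passage justified by $K\in L^1_{\mathrm{loc}}(\R^2)$, by the spatial compactness of Lemma \ref{lem:Kcomp}, and by the uniform control of the mass of $\omega^\e$ at spatial infinity established inside the proof of Lemma \ref{lem:equi}) gives $v=K*\omega$.

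For the strong convergence in (i) and (ii) I would argue by uniqueness of the limit. If $p>1$, Theorem \ref{teo:beale1} applies to $v^\e$ and produces a further subsequence converging strongly in $L^2((0,T);L^2_{\mathrm{loc}}(\R^2))$ to a weak solution; since this subsequence already converges weak-$*$ to $v$ in $L^\infty((0,T);L^2_{\mathrm{loc}}(\R^2))$ and the weak-$*$ limit is unique, the strong limit must be $v$. A self-contained alternative is to combine the spatial relative compactness of $v^\e=K*\omega^\e$ in $L^2_{\mathrm{loc}}(\R^2)$, which Lemma \ref{lem:Kcomp} provides with $q=2$ precisely because $p>1$, with an equicontinuity-in-time bound for $v^\e$ read off from \eqref{eq:vbv} (using $\|F_\e\|_{L^1}\to0$ and a local $L^{r/2}$ bound on $v^\e\otimes v^\e$ with $r>2$), and then to invoke a Simon-type compactness argument. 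If $p=1$, the strong convergence in $L^q((0,T);L^q_{\mathrm{loc}}(\R^2))$ for every $1\le q<2$ is the content of the extension of Beale's theorem proved at the end of Section 3, and the same uniqueness argument identifies the limit with $v$. Relabelling gives the single subsequence realising (i) or (ii) together with (iii).

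It remains to check the regularity of $v$. Since $v=K*\omega$ we have $\nabla v=\nabla K*\omega$, and $\nabla K$ is the matrix-valued kernel of a Calder\'on--Zygmund singular integral operator of fundamental type on $\R^2$ (its entries are homogeneous of degree $-2$ with zero average on the unit circle). If $p>1$ this operator is bounded on $L^p(\R^2)$, so $\|\nabla v(t,\cdot)\|_{L^p(B_R)}\le C(R)\|\omega(t,\cdot)\|_{L^p}$ for a.e.\ $t\in(0,T)$, whence $v\in L^\infty((0,T);W^{1,p}_{\mathrm{loc}}(\R^2))\subset L^1((0,T);W^{1,p}_{\mathrm{loc}}(\R^2))$: this is assumption (R2a). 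If $p=1$ one writes $\partial_j v^i=S^i_j\omega$, where $S^i_j$ is the singular integral operator with kernel $\partial_j K_i$, which is of fundamental type; since $\omega\in L^\infty((0,T);L^1(\R^2))\subset L^1((0,T);L^1(\R^2))$, this is precisely assumption (R2b) with $g=\omega$.

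The step I expect to be the real obstacle is the identification $v=K*\omega$ --- equivalently, that $\curl v$ retains the integrability $L^\infty((0,T);L^p(\R^2))$ --- i.e.\ passing to the limit in the nonlocal Biot--Savart relation. For $p>1$ this is comparatively mild, but for $p=1$ it works only thanks to the equi-integrability of Lemma \ref{lem:equi}, which rules out concentration and converts the a priori $L^\infty_t\mathcal{M}$ bound into genuine weak $L^1$ compactness, and thanks to the compactness of $\{v^\e\}$ coming from Lemma \ref{lem:Kcomp}; one must moreover dispose of the contribution of the total circulation at spatial infinity, which is done by splitting off the fixed smooth steady state $\bar v$ exactly as in the proof of Proposition \ref{prop:vL2loc}.
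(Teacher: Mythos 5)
Your proposal is correct and follows essentially the same route as the paper: uniform $L^p$ bounds (Lemma \ref{lem:bound}) plus equi-integrability (Lemma \ref{lem:equi}) for the weak-$*$ convergence of $\omega^\e$, spatial compactness from Lemma \ref{lem:Kcomp} combined with time equicontinuity and an Aubin--Lions argument (which you offer as your ``self-contained alternative'' and the paper uses as its main argument) for the strong convergence of $v^\e$, and Calder\'on--Zygmund theory for (R2a)/(R2b). The only cosmetic differences are that your primary route for the strong convergence invokes Theorem \ref{teo:beale1} and its $L^1$ extension together with uniqueness of weak limits, and that the paper defers the identification $v=K*\omega$ and the verification of (R2a)/(R2b) to Step 1 of the proof of Theorem \ref{teo:main1}.
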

\smallskip
\begin{proof}
We divide the proof in several steps.\\
\\
\underline{Step 1} \hspace{0.5cm} \emph{Convergence of the vorticity.}\\
\\
By Lemma \ref{lem:bound}, we have that the approximate vorticity satisfies
\begin{equation}
\sup_\e \sup_{t\in[0,T]}\int_{\R^2}|\omega(t,x)|^p\de x \leq C.
\end{equation}
Moreover, when $p=1$ we also have by Lemma \ref{lem:equi} that $\{\omega^\e\}$ is equi-integrable. Then, there exists $\omega\in L^\infty((0,T);L^p(\R^2))$ such that 
\begin{equation}\label{wconvvb}
\omega^\e\weaktos \omega \hspace{0.5cm} \mbox{in }  L^\infty((0,T);L^p(\R^2)).
\end{equation}
\medskip
\\
\underline{Step 2} \hspace{0.5cm} \emph{Convergence of the velocity.}\\
\\
The approximate velocity $v^\e$ satisfies the following uniform bound
\begin{equation}\label{estenv}
\sup_\e \sup_{t\in[0,T]}\int_{B_R}|v^\e(t,x)|^2\de x\leq C(R),
\end{equation}
as a consequence of Young's inequality in the case $p>1$ and of Proposition~\ref{prop:vL2loc} for $p=1$. Moreover, since $v$ is a VB-solution, we have that
\begin{equation}\label{convv}
v^\e\weaktos v \hspace{0.5cm} \mbox{in } L^\infty((0,T);L^2_{\mathrm{loc}}(\R^2)).
\end{equation}
In addition, for some $s,r>0$ we also have the following uniform bound 
$$
\{v^\e\}\subset \mathrm{Lip}([0,T];W^{-s,r}(\R^2)),
$$
(see \cite{Be}). Then, thanks to Aubin-Lions' Lemma together with Lemma \ref{lem:Kcomp}, for $p>1$ we can upgrade the convergence \eqref{convv} to
$$
v^\e\to v \hspace{0.5cm} \mbox{in } L^2((0,T);L^2_{\mathrm{loc}}(\R^2)),
$$
while for $p=1$ we have
$$
v^\e\to v \hspace{0.5cm} \mbox{in } L^q((0,T);L^q_{\mathrm{loc}}(\R^2)),
$$
for any $1\leq q<2$, and this concludes the proof.
\end{proof}
We can now prove our first main theorem.
\begin{thm}\label{teo:main1}
Let $v$ be a VB-solution. Then $v$ satisfies the Euler equations in the sense of Lagrangian and renormalized solutions.
\end{thm}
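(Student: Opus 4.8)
The plan is to transfer the Lagrangian structure from the approximating sequence to the limit using the stability theorem for Lagrangian solutions (Theorem~\ref{prop:stab}) together with the new estimate of Lemma~\ref{lem:est}. The starting observation is that the approximate vorticity $\omega^\e$ is \emph{not} exactly transported by $v^\e$ (it solves \eqref{eq:wvb} with the error term $E_\e$), but the auxiliary function $\bar\omega^\e$ from \eqref{eq:1} \emph{is}: by \eqref{eq:baromega} it is precisely the Lagrangian solution of the linear transport equation with the smooth divergence-free field $v^\e$ and initial datum $\omega_0^\e$. So I would work with $\bar\omega^\e$ rather than $\omega^\e$, and recover information about $\omega^\e$ afterwards via Lemma~\ref{lem:est} and the fact that $\varphi_\e * \bar\omega^\e \to \bar\omega^\e$ strongly (since $\varphi_\e$ is an approximate identity and $\bar\omega^\e$ is uniformly bounded and equi-integrable in the $p=1$ case, uniformly bounded in $L^p$ for $p>1$).

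The main steps, in order, are as follows. First, verify the structural hypotheses (R1), (R2a)/(R2b), (R3) for the limit field $v$ and uniformly for the sequence $v^\e$: here $v = K*\omega$ with $\omega \in L^\infty((0,T);L^p)$, so $\nabla v$ is given by a singular integral of fundamental type applied to $\omega$ (giving (R2b) always, and (R2a) when $p>1$ by Calder\'on--Zygmund), and the growth/decay of the Biot--Savart kernel together with $\omega_0 \in L^1$ gives the decomposition (R1) and the local $L^p$ bound (R3); these bounds are uniform in $\e$ by Lemma~\ref{lem:bound} and Lemma~\ref{lem:equi}. Second, invoke Lemma~\ref{lem:conv} to get $v^\e \to v$ strongly in $L^q((0,T);L^q_{\mathrm{loc}})$ (with $q=2$ if $p>1$, any $q<2$ if $p=1$) and $\omega^\e \weaktos \omega$. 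Third, apply Theorem~\ref{prop:stab} to the Lagrangian solutions $\bar\omega^\e$ of the transport equation with coefficient $v^\e$: since $v^\e \to v$ in $L^1((0,T);L^1_{\mathrm{loc}})$, the uniform (R1)-decomposition bound holds, and $\omega_0^\e \to \omega_0$ in $L^p$ (strong convergence of the mollification, using equi-integrability for $p=1$), we conclude $\bar\omega^\e \to \bar\omega$ in $C([0,T];L^p(\R^2))$ where $\bar\omega$ is the \emph{Lagrangian} solution of $\partial_t \bar\omega + v\cdot\nabla\bar\omega = 0$ with datum $\omega_0$, associated to a regular Lagrangian flow $X$ of $v$. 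Fourth, identify $\bar\omega = \omega$: by Lemma~\ref{lem:est}, $\|\omega^\e - \varphi_\e * \bar\omega^\e\|_{L^\infty((0,T);L^p)} \le C\e^{1+2/p} \to 0$, and $\varphi_\e * \bar\omega^\e \to \bar\omega$ strongly (mollification plus the just-established convergence of $\bar\omega^\e$), so $\omega^\e \to \bar\omega$ strongly in the relevant topology; comparing with $\omega^\e \weaktos \omega$ from Lemma~\ref{lem:conv}(iii) forces $\omega = \bar\omega$. Hence $\omega$ itself is the Lagrangian solution with flow $X$, i.e.\ $\omega(t,\cdot) = \omega_0(X^{-1}(t,\cdot))$, which is exactly \eqref{eq:lagra}; and being Lagrangian it is also renormalized (the renormalization identity follows by pushing forward $\beta(\omega_0)$ along $X$, using incompressibility).

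The main obstacle I expect is the \textbf{identification step combined with the choice of topology for the limit vorticity}: the stability theorem delivers strong $C([0,T];L^p)$ convergence of $\bar\omega^\e$, but Lemma~\ref{lem:conv} only gives weak-$*$ convergence of $\omega^\e$ in $L^\infty((0,T);L^p)$, so one must be careful that the two limits live in compatible spaces and that Lemma~\ref{lem:est} together with the approximate-identity property genuinely upgrades $\omega^\e$ to converge strongly to the same object $\bar\omega$ — in particular, for $p=1$ one needs the equi-integrability of $\{\omega^\e\}$ (Lemma~\ref{lem:equi}) to ensure the mollification $\varphi_\e * \bar\omega^\e$ does not lose mass and converges strongly in $L^1$, and one needs the uniform compact-support / decay control (as in the proof of Lemma~\ref{lem:equi}) to handle convergence at infinity. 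A secondary, more routine, point is checking that the uniform growth bound in (R1) for $v^\e$ can be taken with $\e$-independent constants; this uses only $\|\omega^\e(t,\cdot)\|_{L^1}\le\|\omega_0\|_{L^1}$ and the splitting of $K$ into its $L^1$ and $L^\infty$ parts, exactly as in the proof of Proposition~\ref{prop:vL2loc} and Lemma~\ref{lem:equi}, so it should pose no real difficulty.
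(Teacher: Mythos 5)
Your proposal is correct and follows essentially the same route as the paper: verify (R1)--(R2)--(R3) for $v=K*\omega$, apply the stability theorem (Theorem~\ref{prop:stab}) to the auxiliary Lagrangian solutions $\bar\omega^\e$ of \eqref{eq:1}, and identify $\omega=\bar\omega$ via Lemma~\ref{lem:est} together with the approximate-identity property of $\varphi_\e$. The ``obstacle'' you flag is resolved exactly as you suggest (the paper phrases the identification as a pairing against test functions, and records the resulting strong convergence of $\omega^\e$ in Remark~\ref{rem:strongo}).
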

\begin{proof}
We divide the proof in two steps.\\
\\
\underline{Step 1} \hspace{0.5cm} \emph{Representation formula and additional regularity of $v$.}\\
\\
Let $(\omega^\e,v^\e)$ be a sequence constructed via the vortex blob method which converges to $(\omega,v)$ as in Lemma \ref{lem:conv}. We want to prove that $v=K*\omega$ a.e. in $(0,T)\times\R^2$. For $\eta\in C^\infty_c((0,T)\times\R^2)$ we have
\begin{align*}
0&=\lim_{\e\to 0}\iint (v^\e-K*\omega^\e)\eta \de x\de t =\lim_{\e\to 0}\iint v^\e\eta-\omega^\e(K*\eta) \de x \de t \\ &=\iint v\eta-\omega(K*\eta) \de x \de t =\iint (v-K*\omega)\eta \de x\de t,
\end{align*}
where we have used the fact that $K*\eta\in L^q$ for every $1\leq q\leq \infty$. By varing $\eta\in C^\infty_c((0,T)\times\R^2)$ we have the result.
Moreover, the gradient of $v$ can be written as 
$$
\left(\nabla v\right)_{ij}=S^i_j\omega \hspace{1cm}\mbox{in }\mathcal{S}'(\R^2),\hspace{0.5cm}i,j=1,2,
$$
where each $S^i_j$ is a singular integral operator of fundamental type with kernel the distributional derivative $\partial_{x_j} K_i$. Hence $v$ satisfies hypothesis (R2b) if $\omega_0\in L^1$ since $\partial_{x_j} K_i$ define singular integral operators of fundamental type (see Remark 2.11 in \cite{BC} for the definition of a singular integral on $L^1$ functions). In the case $p>1$, by standard Calder\`on-Zygmund theory on singular integrals we have the estimate 
$$
\sup_{t\in[0,T]}\|\nabla v(t,\cdot)\|_{L^p}\leq\sup_{t\in[0,T]}\|\omega(t,\cdot)\|_{L^p}\leq C\|\omega_0\|_{L^p}
$$
and then $v$ satisfies (R2a). \\
\\
\underline{Step 2}\hspace{0.5cm} \emph{Lagrangian property of the solution.}\\
\\
Let $(\omega^\e,v^\e)$ be chosen as in the previous step and consider the auxiliary problem \eqref{eq:1} introduced in Section 3. By Theorem \ref{prop:stab} we have the existence of $\bar{\omega}\in C([0,T];L^p(\R^2))$ such that
$$
\bar{\omega}^\e\to\bar{\omega} \hspace{0.5cm}\mbox{in }C([0,T];L^p(\R^2))
$$
where $\bar{\omega}(t,x)=\omega_0(X^{-1}(t,\cdot)(x))$ and $X$ is the unique regular Lagrangian flow of $v$. In order to conclude we want to prove that $\omega=\bar{\omega}$ a.e.. Let $\chi\in C^\infty_c((0,T)\times\R^2)$ and compute
\begin{align*}
&\int_0^T\int_{\R^2} (\omega-\bar{\omega})\chi\de x \de t =\lim_{\e\to 0}\int_0^T\int_{\R^2} (\omega^\e-\bar{\omega}^\e)\chi\de x \de t \\
&=\lim_{\e\to 0}\int_0^T\int_{\R^2} (\omega^\e-\varphi_\e*\bar{\omega}^\e)\chi\de x \de t + \lim_{\e\to 0}\int_0^T\int_{\R^2} (\varphi_\e*\bar{\omega}^\e-\bar{\omega}^\e)\chi\de x \de t.
\end{align*}
By estimate \eqref{es:1} and standard properties of the convolution, it is easy to check that the previous sum goes to $0$ as $\e\to 0$, and by varying $\chi\in C^\infty_c((0,T)\times\R^2)$ we have that $\omega=\bar{\omega}$ a.e. in $(0,T)\times\R^2$.
\end{proof}
\medskip
\begin{rem}\label{rem:strongo}
Note that the Step 2 of the proof of Theorem \ref{teo:main1}  together with the estimate in Lemma \ref{es:1} give that the convergence in \eqref{wconvvb} of the approximate vorticity $\omega^\e$ towards the Lagrangian solution $\omega$ is actually strong.
\end{rem}

\bigskip

\section{Conservation of the energy}
In this section we prove our second main result, namely the conservation of the kinetic energy for VB-solutions. We recall the definition of \emph{conservative weak solution} of the 2D Euler equations from \cite{CFLS}.
\begin{defn}
Let $v\in C([0,T];L^2(\R^2))$ be a weak solution of \eqref{eq:eu} with initial datum $v_0\in L^2(\R^2)$. We call $v$ a \emph{conservative weak solution} if
$$
\|v(t,\cdot)\|_{L^2}=\|v_0\|_{L^2} \hspace{1cm} \forall t\in [0,T].
$$
\end{defn}
First of all, note that in the previous definition we are dealing with initial data which are globally square integrable in space, which is equivalent to requiring that the vorticity has zero mean value. This is the content of the following proposition, which can be found in \cite[Prop. 3.3]{MB}:
\begin{prop}\label{prop:dpm}
An incompressible velocity field in $\R^2$ with vorticity of compact support has finite kinetic energy if and only if the vorticity has zero mean value, that is
\begin{equation}\label{zeromean}
\int_{\R^2}|v(t,x)|^2\de x<\infty\iff\int_{\R^2}\omega(t,x)\de x=0.
\end{equation}
\end{prop}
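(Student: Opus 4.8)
The plan is to reduce the equivalence to the behaviour of $v$ at spatial infinity, where $v$ is governed by its leading-order term $\bigl(\int\omega\bigr)K$, and then to use the fact that $|x|^{-1}$ fails to be square-integrable near infinity in the plane. Throughout I take $v=K*\omega$ to be the velocity reconstructed from the vorticity by the Biot--Savart law, as in \eqref{eq:vort}. First I would fix a time $t$ and $R>0$ with $\mathrm{supp}\,\omega(t,\cdot)\subseteq B_R$, and split $\int_{\R^2}|v|^2\de x=\int_{B_{2R}}|v|^2\de x+\int_{B_{2R}^c}|v|^2\de x$. The first integral is always finite, since $K\in L^q_{\mathrm{loc}}(\R^2)$ for every $q<2$ and $\omega(t,\cdot)\in L^1_c(\R^2)$ (and, when $\omega\in L^p$ with $p>1$, even $v\in W^{1,p}_{\mathrm{loc}}\hookrightarrow L^2_{\mathrm{loc}}$ by standard Calder\'on--Zygmund theory, as in Step~1 of the proof of Theorem~\ref{teo:main1}); hence the kinetic energy is finite if and only if $v(t,\cdot)\in L^2(B_{2R}^c)$.

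Next I would set $m:=\int_{\R^2}\omega(t,x)\de x$ and compare $v$ with $mK$ outside the support. For $|x|>2R$ and $|y|\le R$ one has $|x-\theta y|\ge |x|/2$ for all $\theta\in[0,1]$, so, since $K$ is smooth and $(-1)$-homogeneous away from the origin (whence $|\nabla K(z)|\le C|z|^{-2}$), the mean value theorem gives
\[
\bigl|v(t,x)-mK(x)\bigr|=\Bigl|\int_{B_R}\bigl(K(x-y)-K(x)\bigr)\,\omega(t,y)\de y\Bigr|\le C\,\frac{\|\omega(t,\cdot)\|_{L^1}}{|x|^2}\qquad\text{for }|x|>2R.
\]
As $\int_{B_{2R}^c}|x|^{-4}\de x<\infty$, this shows $v(t,\cdot)-mK\in L^2(B_{2R}^c)$.

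Finally, since $|mK(x)|=\tfrac{|m|}{2\pi|x|}$ and $\int_{B_{2R}^c}|x|^{-2}\de x=+\infty$, we have $mK\in L^2(B_{2R}^c)$ if and only if $m=0$; combining this with the previous step via the triangle inequality in $L^2(B_{2R}^c)$ yields $v(t,\cdot)\in L^2(B_{2R}^c)$ if and only if $m=0$, which is exactly \eqref{zeromean}. I do not expect a genuine obstacle here: the heart of the matter is the one-line asymptotic estimate above, and the only points deserving a word of care are the (standard) local square-integrability of $v$ on $B_{2R}$ and the fact that one works with the Biot--Savart velocity $v=K*\omega$.
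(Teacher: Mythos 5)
The paper does not prove this proposition itself---it simply cites \cite[Prop.~3.3]{MB}---and your argument is exactly the standard far-field asymptotics proof used there: split the energy into a local part and a part at infinity, show $v-\bigl(\int\omega\bigr)K=O(|x|^{-2})$ outside the support by the mean value theorem, and observe that $K\sim|x|^{-1}$ fails to be in $L^2$ near infinity in the plane. So the approach matches the cited source, and the heart of the argument is correct.

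One justification, however, does not hold as written: the claim that $\int_{B_{2R}}|v|^2\de x$ is \emph{always} finite because $K\in L^q_{\mathrm{loc}}(\R^2)$ for $q<2$ and $\omega(t,\cdot)\in L^1_c(\R^2)$. Young's inequality only yields $v\in L^q_{\mathrm{loc}}$ for $q<2$, not $q=2$, and for merely $L^1$ vorticity local square-integrability of $v=K*\omega$ can genuinely fail---this is precisely why the paper imposes the extra hypothesis $\omega_0\in H^{-1}_{\mathrm{loc}}(\R^2)$ in Proposition~\ref{prop:vL2loc}. In the settings where the proposition is actually invoked (smooth mollified data $\omega^\e(0,\cdot)$, or $\omega\in L^p$ with $p>1$, where your Calder\'on--Zygmund/Sobolev remark does apply), the local part is finite and the rest of your proof---the bound $|v(t,x)-mK(x)|\le C\|\omega(t,\cdot)\|_{L^1}|x|^{-2}$ for $|x|>2R$, the square-integrability of $|x|^{-2}$ at infinity, and the fact that $mK\in L^2(B_{2R}^c)$ iff $m=0$---is complete and correct. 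You should therefore either restrict the statement to vorticities for which $v\in L^2_{\mathrm{loc}}$ (as the cited reference implicitly does by working with smooth fields) or replace the Young-inequality justification accordingly.
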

Before continuing with our result on the vortex-blob approximation, we recall a theorem proved in \cite{CFLS} about the conservation of the energy for weak solutions of the $2D$ Euler equations. This will be useful in order to better understand our result.
\begin{thm}\label{teo:consweak}
Fix $T>0$ and let $v\in C([0,T];L^2(\T))$ be a weak solution of the 2D Euler equations \eqref{eq:eu} with $\omega\in L^\infty((0,T);L^{p}(\T))$ with $p\geq 3/2$. Then $v$ is conservative. Moreover, the following local energy balance law holds in the sense of distributions
$$
\partial_t\left(\frac{|v|^2}{2}\right)+\dive \left(v\left(\frac{|v|^2}{2}+p\right)\right)=0.
$$
\end{thm}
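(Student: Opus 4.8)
The plan is to run the classical Constantin--E--Titi mollification argument, adapted to the two-dimensional setting in which the available regularity of the solution is $v\in L^\infty((0,T);W^{1,p}(\T))$ rather than a Besov or H\"older bound. First one records this regularity: since $\omega=\curl v\in L^\infty((0,T);L^p(\T))$ and $\dive v=0$, each entry of $\nabla v$ is a Calder\'on--Zygmund transform of $\omega$ (the representation $\partial_j v_i=S^i_j\omega$ already used in the proof of Theorem~\ref{teo:main1}), hence $\sup_t\|\nabla v(t,\cdot)\|_{L^p}\le C\sup_t\|\omega(t,\cdot)\|_{L^p}$. By the Sobolev embedding on $\T$ one then gets $v\in L^\infty((0,T);L^{2p'}(\T))$, where $p'=p/(p-1)$; the decisive point is that the endpoint embedding $W^{1,3/2}(\T)\hookrightarrow L^{6}(\T)$ holds and that $6=2p'$ exactly at $p=3/2$, so that $v\otimes v\in L^\infty((0,T);L^{p'}(\T))$ holds precisely under the hypothesis $p\ge3/2$.

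Next, let $\rho_\delta$ be a standard mollifier, $v^\delta=v*\rho_\delta$, and recover the pressure from $-\Delta p=\dive\dive(v\otimes v)$. Mollifying the equation gives $\partial_t v^\delta+\dive((v\otimes v)*\rho_\delta)+\nabla p^\delta=0$ with $\dive v^\delta=0$, all terms lying in some $L^\infty((0,T);L^q(\T))$, so that $t\mapsto\|v^\delta(t,\cdot)\|_{L^2}^2$ is Lipschitz. Testing with $v^\delta$ and integrating in space, the pressure term vanishes by incompressibility; introducing the Reynolds commutator $r^\delta:=(v\otimes v)*\rho_\delta-v^\delta\otimes v^\delta$ and using $\int_\T v^\delta\cdot\dive(v^\delta\otimes v^\delta)\,\de x=\int_\T v^\delta\cdot\nabla\tfrac{|v^\delta|^2}{2}\,\de x=0$, one arrives at the energy identity
\[
\frac12\,\frac{\de}{\de t}\|v^\delta(t,\cdot)\|_{L^2}^2=\int_\T\nabla v^\delta:r^\delta\,\de x.
\]

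The core of the argument is then to prove $\int_0^t\!\int_\T\nabla v^\delta:r^\delta\,\de x\,\de s\to0$ as $\delta\to0$. From the first step, $\|\nabla v^\delta(s,\cdot)\|_{L^p}\le\|\nabla v(s,\cdot)\|_{L^p}$ is bounded uniformly in $s$ and $\delta$; moreover $\|r^\delta(s,\cdot)\|_{L^{p'}}\le2\|v(s,\cdot)\|_{L^{2p'}}^2$ is bounded uniformly, while for a.e.\ $s$ one has $r^\delta(s,\cdot)\to0$ in $L^{p'}(\T)$, because $(v\otimes v)*\rho_\delta\to v\otimes v$ in $L^{p'}$ (here $v\otimes v\in L^{p'}$ by the first step) and $v^\delta\otimes v^\delta\to v\otimes v$ in $L^{p'}$ (since $v^\delta\to v$ in $L^{2p'}$). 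H\"older's inequality with $1/p+1/p'=1$, together with dominated convergence in the time variable, then yields the claim. This is exactly where the assumption $p\ge3/2$ enters: it is what guarantees $v\otimes v\in L^{p'}$, and for $p<3/2$ this commutator term need not be weakly continuous (the sharpness example of \cite{CFLS}). Integrating the identity above in $t$ and letting $\delta\to0$ gives $\|v^\delta(t,\cdot)\|_{L^2}^2-\|v^\delta(0,\cdot)\|_{L^2}^2\to0$; since $v\in C([0,T];L^2(\T))$ the image $\{v(t,\cdot):t\in[0,T]\}$ is compact in $L^2$, so $\|v^\delta(t,\cdot)\|_{L^2}\to\|v(t,\cdot)\|_{L^2}$ for every $t$, and the conservation $\|v(t,\cdot)\|_{L^2}=\|v_0\|_{L^2}$ follows.

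For the local balance one keeps the $\delta$-level identity in undivided form,
\[
\partial_t\frac{|v^\delta|^2}{2}+\dive\Big(v^\delta\big(\tfrac{|v^\delta|^2}{2}+p^\delta\big)+r^\delta v^\delta\Big)=\nabla v^\delta:r^\delta\qquad\text{in }\mathcal{D}'((0,T)\times\T),
\]
and passes to the limit: $v^\delta\to v$ in $L^2$, and since $p\ge3/2$ forces $v\in L^3$ one also has $v^\delta|v^\delta|^2\to v|v|^2$ in $L^1_{\mathrm{loc}}$; the pressure $p$ lies in the same Lebesgue class as $v\otimes v$ by Calder\'on--Zygmund, so $v^\delta p^\delta\to vp$ in $L^1_{\mathrm{loc}}$; by the previous step $\nabla v^\delta:r^\delta\to0$ in $L^1((0,T)\times\T)$, and $r^\delta v^\delta\to0$ in $L^1_{\mathrm{loc}}$ (as $r^\delta\to0$ in $L^{p'}$ with $v^\delta$ bounded in $L^{2p'}$). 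The stated local energy balance follows. The main obstacle throughout is the weak continuity of the flux commutator $\int\nabla v^\delta:r^\delta$ in the third paragraph: the threshold $p=3/2$ is dictated by the endpoint Sobolev embedding $W^{1,3/2}(\T)\hookrightarrow L^6(\T)$, and since that embedding is not compact there is no uniform-in-time modulus for $r^\delta\to0$, so the time integration has to be carried out by dominated convergence rather than by a uniform estimate.
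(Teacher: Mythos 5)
The paper does not actually prove this theorem: it is recalled verbatim from \cite{CFLS}, with only the remark that the proof is ``based on a mollification argument'' and that $p=3/2$ is needed for the weak continuity of a commutator term in the energy balance. Your Constantin--E--Titi-style argument --- testing the mollified equation with $v^\delta$, isolating the Reynolds term $r^\delta=(v\otimes v)*\rho_\delta-v^\delta\otimes v^\delta$, and killing $\int\nabla v^\delta:r^\delta$ by the duality of $\nabla v\in L^\infty((0,T);L^p)$ against $r^\delta\to0$ in $L^{p'}$ via the endpoint embedding $W^{1,3/2}(\T)\hookrightarrow L^6(\T)$ --- is precisely the argument of \cite{CFLS} that the paper alludes to, and it is correct.
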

Note that in the previous theorem assumption \eqref{zeromean} is not needed since $\T$ is a bounded domain and then $v_0\in L^2(\T)$ even if the vorticity does not have zero mean value. The method of the proof is based on a mollification argument and the exponent $3/2$ is sharp for the method. In particular, the theorem is still valid if we consider weak solutions $v\in C([0,T];L^2(\R^2))$, with zero mean, such that $\omega\in L^\infty((0,T);L^1\cap L^p(\R^2))$ with $p\geq 3/2$.\\
We now prove that under hypothesis \eqref{zeromean} the approximate velocity given by the vortex-blob method is globally square integrable in space.
\begin{lem}\label{lem:glob2}
Let $\omega_0\in L^1_c(\R^2)$ which verifies \eqref{zeromean}. Then the velocity field $v^\e$ given by \eqref{eq:av} verifies the following uniform bound
$$
\sup_{t\in[0,T]}\|v^\e(t,\cdot)\|_{L^2}\leq C,
$$
provided that $\delta(\e)=\e^\sigma$ with $0<\sigma<1/7$.
\end{lem}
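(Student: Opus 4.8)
The plan is to run an energy estimate directly on the approximate system \eqref{eq:vbv}. Two facts will be used from the outset. First, since $\omega_0$ is compactly supported and \eqref{zeromean} holds, Proposition \ref{prop:dpm} guarantees that the limiting velocity $v_0:=K*\omega_0$ belongs to $L^2(\R^2)$. Second, the total circulation of the approximate vorticity vanishes: because the squares $R_i$ tile a compact set containing $\mathrm{supp}\,\omega_0^\e$, one has $\sum_i\Gamma_i^\e=\int_{\R^2}\omega_0^\e=\int_{\R^2}\omega_0=0$. Consequently, for each fixed $\e$ and each $t\in[0,T]$ the field $\omega^\e(t,\cdot)$ is supported in a fixed ball (by \eqref{eq:epsvreg}) and has zero integral, so $v^\e(t,\cdot)=K*\omega^\e(t,\cdot)$ decays like $|x|^{-2}$ at infinity; in particular $v^\e(t,\cdot)\in L^2(\R^2)$, with $\nabla v^\e(t,\cdot)$ and $p^\e(t,\cdot)$ decaying accordingly.

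Next I would multiply \eqref{eq:vbv} by $v^\e$ and integrate over $\R^2$. The decay just recorded makes the integrations by parts legitimate, so that $\int_{\R^2}(v^\e\cdot\nabla)v^\e\cdot v^\e\,\de x=\tfrac12\int_{\R^2}v^\e\cdot\nabla|v^\e|^2\,\de x=0$ and $\int_{\R^2}\nabla p^\e\cdot v^\e\,\de x=0$, using $\dive v^\e=0$. Combining this with the fact that $F\mapsto K*\dive F$ is a bounded operator on $L^2$ (as already recalled in the proof of Proposition \ref{prop:vL2loc}), one obtains
$$
\frac12\frac{\de}{\de t}\|v^\e(t,\cdot)\|_{L^2}^2=\int_{\R^2}v^\e\cdot\big(K*\dive F_\e\big)\,\de x\le C\,\|v^\e(t,\cdot)\|_{L^2}\,\|F_\e(t,\cdot)\|_{L^2},
$$
whence $\frac{\de}{\de t}\|v^\e(t,\cdot)\|_{L^2}\le C\|F_\e(t,\cdot)\|_{L^2}$ and, after integration in time,
$$
\sup_{t\in[0,T]}\|v^\e(t,\cdot)\|_{L^2}\le\|v^\e(0,\cdot)\|_{L^2}+C\,T\sup_{t\in[0,T]}\|F_\e(t,\cdot)\|_{L^2}.
$$
The proof is thus reduced to uniform-in-$\e$ bounds on the two terms on the right-hand side.

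For the forcing term, with the choice $\delta(\e)=\e^\sigma$, $0<\sigma<1/7$, and $h(\e)$ as in Lemma \ref{lem:fe}, that lemma yields $\sup_{t\in[0,T]}\|F_\e(t,\cdot)\|_{L^2}\to 0$, so this contribution is uniformly bounded. For the initial velocity, $v^\e(0,\cdot)=\sum_i\Gamma_i^\e K_\e(\cdot-\alpha_i)$ is the discretization of the integral $K_\e*\omega_0^\e=(K*\omega_0)*\varphi_\e*j_\delta$; exactly as in the proof of Proposition \ref{prop:vL2loc}, the discretization error is $O(h\e^{-2}\|\omega_0\|_{L^1})$ pointwise and $O(h|x|^{-2})$ at infinity, hence $o(1)$ in $L^2$ by the choice of $h$, while
$$
\big\|(K*\omega_0)*\varphi_\e*j_\delta\big\|_{L^2}\le\|K*\omega_0\|_{L^2}=\|v_0\|_{L^2}
$$
since convolution with the probability densities $\varphi_\e$ and $j_\delta$ is a contraction on $L^2$. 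Therefore $\sup_\e\|v^\e(0,\cdot)\|_{L^2}<\infty$, and combining the two estimates gives $\sup_{t\in[0,T]}\|v^\e(t,\cdot)\|_{L^2}\le C$, uniformly in $\e$.

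The only genuinely delicate point is the uniform bound on $\|v^\e(0,\cdot)\|_{L^2}$, and this is exactly where hypothesis \eqref{zeromean} is indispensable: through Proposition \ref{prop:dpm} it ensures the finiteness of the limiting energy $\|v_0\|_{L^2}$ despite $K\notin L^2_{\mathrm{loc}}(\R^2)$ (the $|x|^{-1}$ tail of the Biot--Savart kernel is killed by the zero-mean cancellation), and through the discrete identity $\sum_i\Gamma_i^\e=0$ it transfers the $|x|^{-2}$ far-field decay to $v^\e$ itself --- which is also what licenses the integrations by parts in the energy estimate. The energy estimate and the control of $F_\e$ via Lemma \ref{lem:fe} are then routine.
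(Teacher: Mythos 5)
Your argument is correct and follows essentially the same route as the paper: the energy estimate on \eqref{eq:vbv} yielding $\frac{\de}{\de t}\|v^\e(t,\cdot)\|_{L^2}\leq C\|F_\e(t,\cdot)\|_{L^2}$, the control of the forcing via Lemma \ref{lem:fe}, and the uniform bound on $\|v^\e(0,\cdot)\|_{L^2}$ through Proposition \ref{prop:dpm} and the discretization argument of Proposition \ref{prop:vL2loc}. The extra care you take with the zero total circulation $\sum_i\Gamma_i^\e=0$ and the resulting $|x|^{-2}$ decay is exactly the point the paper leaves implicit when it says $v^\e(0,\cdot)$ "verifies the hypothesis of Proposition \ref{prop:dpm}".
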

\begin{proof}
Multiply the equation \eqref{eq:vbv} by $v^\e$; integrating over the whole plane and by using the notation $\star$ introduced in Section 2.1, we obtain
\begin{align*}
\frac{1}{2}\frac{\de}{\de s}\|v^\e(s,\cdot)\|_{L^2}^2 &= \int_{\R^2}(K*E_\e)\cdot v^\e\de x=-\int_{\R^2}E_\e(K\star v^\e)\de x\\
& =\int_{\R^2}F_\e\cdot\nabla (K\star v^\e)\de x=-\int_{\R^2}(\nabla K\star F_\e)\cdot v^\e\de x\\
&\leq \|  \nabla K\star F_\e(s,\cdot)\|_{L^2}\|  v^\e(s,\cdot)\|_{L^2}\leq \|  F_\e(s,\cdot)\|_{L^2}\|  v^\e(s,\cdot)\|_{L^2},
\end{align*}
which means that
$$
\frac{\de}{\de s}\|v^\e(s,\cdot)\|_{L^2}\leq \|  F_\e(s,\cdot)\|_{L^2}.
$$
Integrating in time we have that
$$
\|v^\e(t,\cdot)\|_{L^2}\leq \int_0^T\|  F_\e(s,\cdot)\|_{L^2}\de s+\|v^\e(0,\cdot)\|_{L^2}.
$$
Note that $v^\e(0,\cdot)=K*\omega^\e(0,\cdot)$ verifies the hypothesis of Proposition \ref{prop:dpm} and, since the support of $\omega^\e(0,\cdot)$ is uniformly bounded in $\e$, we have that
$$
\|v^\e(0,\cdot)\|_{L^2}\leq C,
$$
where the constant $C$ is indipendent from $\e$. We omit the details of the proof of the previous inequality since it can be done with the same computations of the bound of the $L^2$ norm of $\tilde{v}_0^\e$ in Proposition \ref{prop:vL2loc}. This fact together with Lemma \ref{lem:fe} gives the result.
\end{proof}

With the previous lemma we can prove that the velocity field $v^\e$ converges globally in $L^2$ towards $v$: this will be fundamental in the proof of Theorem \ref{teo:main2}.
\begin{lem}\label{lem:globalconv}
Let $\omega_0\in L^p_c(\R^2)$, with $p>1$, which verifies \eqref{zeromean}. Let $v$ be a VB-solution and $\{v^{\e}\}_{\e}$ as in Definition \ref{def:VB}. Then, up to a subsequence not relabelled the velocity field $v^\e$  satisfies the following convergence
\begin{equation}\label{conv:glob}
v^\e\to v \hspace{1cm}\mbox{in }C([0,T];L^2(\R^2)).
\end{equation}
\end{lem}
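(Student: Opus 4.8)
The plan is to upgrade the convergence of Lemma~\ref{lem:conv}, which only gives $v^\e\to v$ in $L^2((0,T);L^2_{\mathrm{loc}}(\R^2))$, to strong convergence in $C([0,T];L^2(\R^2))$, using the global-in-space bound of Lemma~\ref{lem:glob2}, the strong convergence $\omega^\e\to\omega$ in $L^\infty((0,T);L^p(\R^2))$ from Theorem~\ref{teo:main1} and Remark~\ref{rem:strongo}, the smallness of the error $F_\e$ in $L^1((0,T);L^2(\R^2))$ provided by Lemma~\ref{lem:fe} (for which the hypothesis $\delta(\e)=\e^\sigma$ with $\sigma<1/7$ is exactly what is needed), and a modified Serfati identity for $v^\e$. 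As a preliminary step I would show that $v^\e(0,\cdot)=K*\omega^\e(0,\cdot)\to v_0$ strongly in $L^2(\R^2)$: since $\omega^\e(0,\cdot)$ has zero mean (by hypothesis \eqref{zeromean}) and uniformly compact support, arguing as in the estimate of the $L^2$ norm of $\tilde v^\e_0$ in the proof of Proposition~\ref{prop:vL2loc} together with \eqref{es:1} at $t=0$ yields a uniform bound $\sup_\e\|v^\e(0,\cdot)\|_{L^2}\le C$ and a uniform decay $\theta(R):=\sup_\e\|v^\e(0,\cdot)\|_{L^2(\R^2\setminus B_R)}\to 0$ as $R\to\infty$, which combined with the local convergence in Definition~\ref{def:VB} gives the claim; along the way one also gets $\omega^\e(0,\cdot)\to\omega_0$ in $L^p(\R^2)$ from \eqref{es:1} and mollification.

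The core tool is a Serfati-type identity, adapted to the presence of $F_\e$. Fix a radial $a\in C^\infty_c(\R^2)$ with $a\equiv 1$ on $B_1$, set $a_\lambda=a(\cdot/\lambda)$, split $K=a_\lambda K+(1-a_\lambda)K$, and use \eqref{eq:vbv}, \eqref{eq:wvb} together with $\curl(K*E_\e)=E_\e=\dive F_\e$ to derive, for every $t\in[0,T]$,
\begin{equation*}
v^\e(t,\cdot)-v^\e(0,\cdot)=(a_\lambda K)*\big(\omega^\e(t,\cdot)-\omega^\e(0,\cdot)\big)-\int_0^t\nabla^2\big((1-a_\lambda)K\big)\star\big(v^\e\otimes v^\e\big)(s,\cdot)\,\de s+\int_0^t\mathcal R_\e(s,\cdot)\,\de s ,
\end{equation*}
where $\mathcal R_\e$ gathers the contributions of $F_\e$ and satisfies $\|\mathcal R_\e(s,\cdot)\|_{L^2}\le C\|F_\e(s,\cdot)\|_{L^2}$, since both $F\mapsto K*\dive F$ and $F\mapsto\nabla\big((1-a_\lambda)K\big)*F$ are bounded on $L^2(\R^2)$. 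The virtue of this identity is that $(1-a_\lambda)K$ is smooth, vanishes on $B_\lambda$ and decays like $|x|^{-1}$, so the kernel $m_\lambda:=\nabla^2\big((1-a_\lambda)K\big)$ satisfies $|m_\lambda(x)|\lesssim(1+|x|)^{-3}$, belongs to $L^1(\R^2)\cap L^2(\R^2)$, and moreover $\|m_\lambda\|_{L^2(\R^2)}\le C\lambda^{-2}$; while $a_\lambda K$ is compactly supported and lies in $L^q(\R^2)$ for a suitable $q<2$ when $p>1$.

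One then passes to the limit term by term, for $\lambda$ fixed. The first term converges in $C([0,T];L^2(\R^2))$ by Young's inequality and the strong $L^\infty_tL^p_x$ convergence of $\omega^\e$; the $\mathcal R_\e$-term tends to $0$ in $C([0,T];L^2(\R^2))$ by Lemma~\ref{lem:fe}. For the nonlinear term, one splits each convolution at a ball $B_{R/2}$: the contribution of $(v^\e\otimes v^\e)\chi_{B_{R/2}}$ converges on $B_R^c$ by the local strong convergence $v^\e\otimes v^\e\to v\otimes v$ in $L^1((0,T);L^1_{\mathrm{loc}})$ together with the $|x|^{-3}$ decay of $m_\lambda$ (this decay is what makes the corresponding $L^2(B_R^c)$ tail $\lesssim R^{-2}\sup_s\|v^\e(s,\cdot)\|^2_{L^2}$, hence summable at infinity), while the contribution of $(v^\e\otimes v^\e)\chi_{B_{R/2}^c}$ is controlled by $\|m_\lambda\|_{L^2}\,\sup_s\|v^\e(s,\cdot)\|^2_{L^2(B_{R/2}^c)}\le C\lambda^{-2}\sup_s\|v^\e(s,\cdot)\|^2_{L^2(B_{R/2}^c)}$. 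Feeding this back into the identity, together with the analogous tails of the other two terms and the uniform tightness of $\omega^\e$ in $L^p$ (which follows from Lemma~\ref{lem:lag} and De la Vall\'ee-Poussin applied to $\{|\omega_0^\e|^p\}$, as in the proof of Lemma~\ref{lem:equi}), one obtains a closed estimate for $\Psi^\e(R):=\sup_t\|v^\e(t,\cdot)\|_{L^2(\R^2\setminus B_R)}^2$ of the schematic form $\Psi^\e(R)\le C\big(\theta(R)^2+\kappa_\lambda(R)^2+R^{-4}+\|F_\e\|^2_{L^1((0,T);L^2)}\big)+C_0\lambda^{-2}\Psi^\e(R/2)$ with $\kappa_\lambda(R)\to 0$ as $R\to\infty$; choosing $\lambda$ large so that $C_0\lambda^{-2}<1/2$ and using that $\Psi^\e$ is bounded, a finite iteration gives $\lim_{R\to\infty}\limsup_{\e\to0}\Psi^\e(R)=0$, i.e.\ uniform tightness of $v^\e$ in $L^2$. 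Combined with the local strong convergence, the term-by-term limit of the Serfati identity and the preliminary step then yield $v^\e\to v$ in $C([0,T];L^2(\R^2))$ along the subsequence.

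The main obstacle is the apparent circularity in the nonlinear term: the local compactness of Lemma~\ref{lem:conv} does not by itself prevent $|v^\e|^2$ from carrying mass off to spatial infinity, and the tail of $\nabla^2\big((1-a_\lambda)K\big)\star(v^\e\otimes v^\e)$ involves precisely the quantity $\Psi^\e$ one is trying to bound. This is broken by choosing the Serfati cut-off at a large scale $\lambda$, so that the $L^2$ norm of the kernel --- which scales like $\lambda^{-2}$ --- is small, turning the estimate for $\Psi^\e$ into a convergent fixed-point relation; the precise decay $|m_\lambda(x)|\lesssim(1+|x|)^{-3}$ is what guarantees that the part of the tail coming from the bulk of $v^\e\otimes v^\e$ is integrable in $L^2$ at infinity. (For $p\ge 6/5$ one can alternatively run a localized energy estimate as in Lemma~\ref{lem:glob2}, testing \eqref{eq:vbv} against $v^\e\zeta_R^2$ and using $v^\e\in L^\infty((0,T);L^3(\R^2))$; the Serfati argument has the advantage of covering also the range $1<p<6/5$, where $v^\e$ need not be uniformly in $L^3$.)
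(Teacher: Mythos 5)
Your proposal is correct in substance but takes a genuinely different route from the paper. The paper also builds a Serfati-type identity for $v^\e$ (with a \emph{fixed} cut-off $a$, not a large parameter $\lambda$), but instead of proving tightness of $v^\e$ in $L^2$ and passing to the limit term by term, it writes the identity for the \emph{difference} of two elements $v^n-v^m$ of the approximating sequence and shows directly that $v^\e$ is a Cauchy sequence in $C([0,T];L^2(\R^2))$. The decisive point there is that the nonlinear term, after Young's inequality with the kernel $\nabla\nabla^\perp[(1-a)K]\in L^2(\R^2)$ and the splitting $v^n\otimes v^n-v^m\otimes v^m=v^n\otimes(v^n-v^m)+(v^n-v^m)\otimes v^m$, is bounded by $C\|v_0\|_{L^2}\int_0^t\|v^n(s,\cdot)-v^m(s,\cdot)\|_{L^2}\de s$ thanks to the uniform global bound of Lemma~\ref{lem:glob2}, so Gronwall's lemma closes the estimate with no smallness parameter at all. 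This is shorter and bypasses the two most delicate points of your argument: the uniform $L^p$-tightness of $\omega^\e$ at spatial infinity (which you would need to prove as an $L^p$ analogue of Step~3 of Lemma~\ref{lem:equi}) and the fixed-point iteration in $R$ with $\lambda$ chosen so that $\|\nabla^2((1-a_\lambda)K)\|_{L^2}\lesssim\lambda^{-2}$ is small. Your scheme does appear to work --- the kernel bounds are sound, the recursion for $\Psi^\e(R)$ is linearized by the uniform bound of Lemma~\ref{lem:glob2} (note it is really quadratic in $\Psi^\e(R/2)$), and uniform-in-time convergence is recovered because every term of the limiting identity converges in $C([0,T];L^2)$ --- and it has the merit of making explicit where mass could escape to infinity; but it is considerably heavier than the Cauchy--Gronwall argument. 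Both proofs share the same preliminary ingredients: the strong convergence $\omega^\e\to\omega$ from Remark~\ref{rem:strongo}, the global $L^2$ bound of Lemma~\ref{lem:glob2}, the vanishing of $F_\e$ from Lemma~\ref{lem:fe}, and the global $L^2$ convergence of the initial velocities, which, as you correctly note, must be upgraded from the $L^2_{\mathrm{loc}}$ convergence of Definition~\ref{def:VB} using \eqref{zeromean}.
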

\begin{proof}
According to Lemma \ref{lem:conv} and Remark \ref{rem:strongo}, up to a subsequence not relabelled, there exists $\omega\in C([0,T];L^p(\R^2))$ such that 
\begin{equation*}
\omega^\e\to \omega \hspace{0.5cm} \mbox{in }  C([0,T];L^p(\R^2)).
\end{equation*}
Moreover, by Lemma \ref{lem:glob2} both $v$ and $v^{\e}$ are in $L^{\infty}((0,T);L^2(\R^2))$. 
In order to prove the convergence stated in \eqref{conv:glob}, we will prove that $v^\e$ is a Cauchy sequence in $C([0,T];L^2(\R^2))$. Let $\{\e_n\}_{n}$ be any infinitesimal sequence. We denote $v^n,\omega^n$ the velocity field and the vorticity given by the vortex-blob approximation. We divide the proof in several steps.\\
\\
\underline{Step 1}\hspace{0.5cm}\emph{A Serfati identity for the vortex-blob approximation.}\\
\\
In this step we derive a formula for the approximate velocity $v^n$ in the same spirit of the Serfati identity derived in \cite{AKFL, S}.\\
Let $a\in C^\infty_c(\R^2)$ be a smooth function such that $a(x)=1$ if $|x|<1$ and $a(x)=0$ for $|x|>2$. Differentiating in time the Biot-Savart formula we obtain that for $i=1,2$
\begin{align}
\partial_s v^n_i(s,x)&=K_i*(\partial_s \omega^n)(s,x) \nonumber\\
&=(aK_i)*(\partial_s \omega^n)(s,x)+[(1-a)K_i]*(\partial_s \omega^n)(s,x).\label{proof:convglob1}
\end{align}
Now we use the equation \eqref{eq:wvb} for $\omega^n$ obtaining
$$
\partial_s \omega^n=-v^n\cdot\nabla \omega^n+E_n,
$$
and substituting in \eqref{proof:convglob1} we obtain
\begin{equation}
\partial_s v^n_i=(aK_i)*(\partial_s \omega^n)-[(1-a)K_i]*(v^n\cdot\nabla \omega^n)+[(1-a)K_i]*E_n.
\end{equation}
Since $E_n=\dive F_n$ and by the identity
$$
v^n\cdot\nabla \omega^n=\curl(v^n\cdot\nabla v^n)=\curl\dive(v^n\otimes v^n)
$$
we obtain that
\begin{align}
&[(1-a)K_i]*(v^n\cdot\nabla \omega^n)=\left(\nabla\nabla^\perp[(1-a)K_i]\right)\star(v^n\otimes v^n),\label{proof:convglob2}\\
&[(1-a)K_i]*E_n=\left(\nabla[(1-a)K_i]\right)\star F_n, \label{proof:convglob3}
\end{align}
where the notation $\star$ was already introduced.
Substituting the expressions \eqref{proof:convglob2} and \eqref{proof:convglob3} in \eqref{proof:convglob1} and integrating in time we have that $v^n$ satisfies the following formula:
\begin{equation}\label{eq:serfativb}
\begin{split}
v^n_i(t,x)&=v^n_i(0,x)+(aK_i)*\left(\omega^n(t,\cdot)-\omega^n(0,\cdot)\right)(x)\\
&-\int_0^t\left(\nabla\nabla^\perp[(1-a)K_i]\right)\star(v^n(s,\cdot)\otimes v^n(s,\cdot))(x)\de s\\ &+\int_0^t\left(\left(\nabla[(1-a)K_i]\right)\star F_n(s,\cdot)\right)(x)\de s.
\end{split}
\end{equation}
\\
\underline{Step 2}\hspace{0.5cm}\emph{$v^n$ is a Cauchy sequence in $C([0,T];L^2(\R^2))$.}\\
\\
Using formula \eqref{eq:serfativb} we can prove that $v^n$ is a Cauchy sequence. We consider $v^n,v^m$ with $n,m \in\N$. By linearity of the convolution we have that $v^n-v^m$ satisfies the following
\begin{equation}\label{diff}
\begin{split}
v^n_i(t,x)&-v^m_i(t,x)=\underbrace{v^n_i(0,x)-v^m_i(0,x)}_{(I)}\\ &
+\underbrace{(aK_i)*(\omega^n(t,\cdot)-\omega^m(t,\cdot))(x)}_{(II)}+\underbrace{(aK_i)*(\omega^m(0,\cdot)-\omega^n(0,\cdot))(x)}_{(III)}\\ 
&-\int_0^t\underbrace{\left(\nabla\nabla^\perp[(1-a)K_i]\right)\star(v^n(s,\cdot)\otimes v^n(s,\cdot)-v^m(s,\cdot)\otimes v^m(s,\cdot))(x)}_{(IV)}\de s\\ &+\int_0^t\underbrace{\left(\left(\nabla[(1-a)K_i]\right)\star(F_n(s,\cdot)-F_m(s,\cdot)\right)(x)}_{(V)}\de s.
\end{split}
\end{equation}
In order to estimate $\|v^n(t,\cdot)-v^m(t,\cdot)\|_{L^2}$ we estimate separately the $L^2$ norms of the terms on the right hand side of \eqref{diff}. We start by estimating $(I)$: given $\eta>0$, since the initial datum $v^n(0,\cdot)$ converges in $L^2$ to $v_0$, we have that there exists $N_1$ such that 
\begin{equation}\label{I}
\|v^n(0,\cdot)-v^m(0,\cdot)\|_{L^2}<\eta \hspace{1cm}\mbox{for any }n,m>N_1.
\end{equation}
We deal now with $(II),(III)$: if $\omega_0\in L^p_c(\R^2)$ with $1<p<2$, by Young's convolution inequality we have that
$$
\|(aK)*(\omega^n(t,\cdot)-\omega^m(t,\cdot))\|_{L^2}\leq \|aK\|_{L^q}\|\omega^n(t,\cdot)-\omega^m(t,\cdot)\|_{L^p},
$$
where $1<q<2$ is such that $1+1/2=1/p+1/q$, while for $p\geq 2$
$$
\|(aK)*(\omega^n(t,\cdot)-\omega^m(t,\cdot))\|_{L^2}\leq \|aK\|_{L^1}\|\omega^n(t,\cdot)-\omega^m(t,\cdot)\|_{L^2}. 
$$
Notice that $\|aK\|_{L^q}\leq \|K\|_{L^q(B_2)}$ and $K\in L^q_{\mathrm{loc}}(\R^2)$ for any $1\leq q<2$. Moreover, by the strong convergence of $\omega^n$ in $C((0,T); L^{p}(\R^2))$ and the bound $\{\omega^n\}_{n}\subset L^{\infty}((0,T);L^{1}\cap L^{p}(\R^2))$ we conclude that both in the case $1<p<2$ and in the case $p\geq 2$ there exists $N_2$ such that
\begin{equation}\label{II+III}
\|(aK)*(\omega^n(t,\cdot)-\omega^m(t,\cdot))\|_{L^2}+\|(aK)*(\omega^n(0,\cdot)-\omega^m(0,\cdot))\|_{L^2}<C\eta,
\end{equation}
for any $n,m>N_2$. We deal now with $(IV)$: by Young's convolution inequality we have that
\begin{align}
\|\nabla\nabla^\perp&[(1-a)K]\star(v^n(s,\cdot)\otimes v^n(s,\cdot)-v^m(s,\cdot)\otimes v^m(s,\cdot)\|_{L^2}\nonumber\\ &\leq \|\nabla\nabla^\perp[(1-a)K]\|_{L^2}\underbrace{\|v^n(s,\cdot)\otimes v^n(s,\cdot)-v^m(s,\cdot)\otimes v^m(s,\cdot)\|_{L^1}}_{(IV*)}\label{arr}.
\end{align}
We add and subtract $v^n(s,\cdot)\otimes v^m(s,\cdot)$ in $(IV*)$ and by H\"older inequality we have
\begin{align*}
\|v^n(s,\cdot)\otimes v^n(s,\cdot)&-v^m(s,\cdot)\otimes v^m(s,\cdot)\|_{L^1}\\ &\leq \left(\|v^n(t,\cdot)\|_{L^2}+\|v^m(t,\cdot)\|_{L^2}\right)\|v^n(s,\cdot)-v^m(s,\cdot)\|_{L^2}.
\end{align*}
For the first factor in \eqref{arr} we have that
$$
\nabla\nabla^\perp[(1-a)K_i]=-(\nabla\nabla^\perp a )K_i-\nabla^\perp a\nabla K_i-\nabla a \nabla^\perp K_i+(1-a)\nabla\nabla^\perp K_i,
$$
and it is easy to see that each term on the right hand side has uniformly bounded $L^2$ norm. Then we have that
\begin{equation}\label{IV}
\begin{split}
\int_0^t\|\nabla\nabla^\perp[(1-a)K]\star(v^n(s,\cdot)&\otimes v^n(s,\cdot)-v^m(s,\cdot)\otimes v^m(s,\cdot)\|_{L^2}\de s\\ & \leq C \|v_0\|_{L^2}\int_0^t\|v^n(s,\cdot)-v^m(s,\cdot)\|_{L^2}\de s.
\end{split}
\end{equation}
Finally, we deal with $(V)$: again by Young's inequality we have that
\begin{align*}
\|\left(\nabla[(1-a)K]\right)&\star(F_n(s,\cdot)-F_m(s,\cdot)\|_{L^2}\\
&\leq \|\nabla[(1-a)K]\|_{L^2}\|F_n(s,\cdot)-F_m(s,\cdot)\|_{L^1}.
\end{align*}
Arguing as for $(IV)$, since $\nabla K$ is in $L^2(B_1^c)$, a straightforward computation shows that $\nabla[(1-a)K]$ is bounded in $L^2$. On the other hand, $F_n$ goes to $0$ in $L^\infty((0,T);L^1(\R^2))$ so there exists $N_3$ such that for all $n,m>N_3$ we have that
\begin{equation}\label{V}
\|\left(\nabla[(1-a)K]\right)\star(F_n(s,\cdot)-F_m(s,\cdot)\|_{L^2}\leq C\eta.
\end{equation}
Then, putting together \eqref{I},\eqref{II+III},\eqref{IV} and \eqref{V} we obtain that for all $n,m>N:=\max\{N_1,N_2,N_3\}$
\begin{equation}
\|v^n(t,\cdot)-v^m(t,\cdot)\|_{L^2}\leq C\left(\eta+\int_0^t\|v^n(s,\cdot)-v^m(s,\cdot)\|_{L^2}\de s\right),
\end{equation}
and by Gronwall's lemma
\begin{equation}\label{proof:cauchyfinal}
\|v^n(t,\cdot)-v^m(t,\cdot)\|_{L^2}\leq C(T)\eta.
\end{equation}
Taking the supremum in time in \eqref{proof:cauchyfinal} we have the result.
\end{proof}

We are now in position of proving our second main theorem
\begin{thm}\label{teo:main2}
Let $v$ be a VB-solution and assume that the initial vorticity $\omega_0\in L^p_c(\R^2)$, with $p>1$, satisfies \eqref{zeromean}. Then $v$ is a conservative weak solution. Moreover, if $p\geq 6/5$ the following local energy balance holds
\begin{equation}\label{eq:balance}
\partial_t\left( \frac{|v|^2}{2} \right)+\dive\left( v\left(\frac{|v|^2}{2} +p\right)\right)=0 \hspace{0.7cm}\mbox{in }\mathcal{D}'(\R^2).
\end{equation}
\end{thm}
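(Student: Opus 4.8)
The plan is to treat the two claims separately and to obtain both by passing to the limit in identities for the vortex-blob approximation $(\omega^\e,v^\e)$, using the strong convergences already established; throughout, the parameters are chosen so that Lemma~\ref{lem:fe} and Lemma~\ref{lem:glob2} apply, i.e. $\delta(\e)=\e^\sigma$ with $0<\sigma<1/7$.

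\emph{Conservation of the kinetic energy} (this part needs only $p>1$). Integrating in time the energy identity derived in the proof of Lemma~\ref{lem:glob2} (multiply \eqref{eq:vbv} by $v^\e$, integrate over $\R^2$, use $\dive v^\e=0$ and $E_\e=\dive F_\e$) gives, for every $t\in[0,T]$,
$$\|v^\e(t,\cdot)\|_{L^2}^2=\|v^\e(0,\cdot)\|_{L^2}^2-2\int_0^t\int_{\R^2}(\nabla K\star F_\e(s,\cdot))\cdot v^\e(s,\cdot)\,\de x\,\de s,$$
and the last term is bounded in absolute value by $C\big(\sup_{s\in[0,T]}\|v^\e(s,\cdot)\|_{L^2}\big)\int_0^T\|F_\e(s,\cdot)\|_{L^2}\,\de s$, which tends to $0$ by Lemma~\ref{lem:glob2} and Lemma~\ref{lem:fe} (recall that $F\mapsto K*\dive F$ is bounded on $L^2$). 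By Lemma~\ref{lem:globalconv}, along a subsequence $v^\e\to v$ in $C([0,T];L^2(\R^2))$, so $\|v^\e(t,\cdot)\|_{L^2}\to\|v(t,\cdot)\|_{L^2}$ uniformly in $t$ and $\|v^\e(0,\cdot)\|_{L^2}\to\|v_0\|_{L^2}$ (note $v(0,\cdot)=v_0$, since $v^\e(0,\cdot)\to v_0$ in $L^2_{\mathrm{loc}}$ by Definition~\ref{def:VB}). Letting $\e\to 0$ yields $\|v(t,\cdot)\|_{L^2}=\|v_0\|_{L^2}$ for all $t\in[0,T]$, and as $v\in C([0,T];L^2(\R^2))$ this is precisely the assertion that $v$ is a conservative weak solution.

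\emph{Local energy balance for $p\ge 6/5$.} Here I would pass to the limit in the pointwise energy identity for the smooth fields $v^\e$: dotting \eqref{eq:vbv} with $v^\e$ and using $\dive v^\e=0$,
$$\partial_t\Big(\frac{|v^\e|^2}{2}\Big)+\dive\Big(v^\e\Big(\frac{|v^\e|^2}{2}+p^\e\Big)\Big)=v^\e\cdot(K*E_\e),\qquad \|K*E_\e(s,\cdot)\|_{L^2}\le C\|F_\e(s,\cdot)\|_{L^2},$$
the bound again following from $E_\e=\dive F_\e$. The decisive additional input is an improved convergence of the velocities: since $\omega^\e\to\omega$ \emph{strongly} in $C([0,T];L^p(\R^2))$ (Lemma~\ref{lem:conv} and Remark~\ref{rem:strongo}) and $v^\e=K*\omega^\e$, $v=K*\omega$, the Hardy--Littlewood--Sobolev inequality (treating the far part of $K$ through the splitting $K=K\chi_{B_1}+K\chi_{B_1^c}$ and the uniform bound $\omega^\e\in L^\infty((0,T);L^1\cap L^p(\R^2))$) yields
$$v^\e\to v\qquad\text{in }L^\infty((0,T);L^3(\R^2)),$$
the exponent $3$ being exactly the Sobolev conjugate of $6/5$. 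Consequently $v^\e\otimes v^\e\to v\otimes v$ in $L^\infty((0,T);L^{3/2}(\R^2))$, and since $-\Delta p^\e=\dive\dive(v^\e\otimes v^\e)$, Calder\'on--Zygmund theory gives $p^\e\to p$ in $L^\infty((0,T);L^{3/2}(\R^2))$, where $p$ is the pressure associated to $v$. With these convergences the passage to the limit in $\mathcal{D}'((0,T)\times\R^2)$ is routine: $|v^\e|^2\to|v|^2$ in $C([0,T];L^1(\R^2))$ handles the time-derivative term; the elementary inequality $\big||v^\e|^2v^\e-|v|^2v\big|\le C(|v^\e|^2+|v|^2)|v^\e-v|$ together with H\"older's inequality and the $L^3$-convergence gives $v^\e|v^\e|^2\to v|v|^2$ in $L^1_{\mathrm{loc}}$, and similarly $v^\e p^\e\to vp$ in $L^1_{\mathrm{loc}}$ by H\"older with exponents $3$ and $3/2$; finally $v^\e\cdot(K*E_\e)\to 0$ in $L^1_{\mathrm{loc}}$ because $v^\e$ is bounded in $L^2$ while $\|K*E_\e\|_{L^2}\le C\|F_\e\|_{L^2}\to 0$. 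This produces \eqref{eq:balance}.

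The main obstacle is precisely this improved $L^3$-convergence of $v^\e$ at the endpoint $p=6/5$: weak-$*$ compactness of $\{\omega^\e\}$ alone would only give weak convergence of $v^\e$ in $L^3$, which is not enough to pass to the limit in the cubic term $v^\e|v^\e|^2$ or in $v^\e p^\e$. It is the \emph{strong} convergence of the vorticities (Remark~\ref{rem:strongo}), transported to the velocities via Hardy--Littlewood--Sobolev, that makes the argument close; everything else reduces to H\"older's inequality and the decay of $F_\e$ from Lemma~\ref{lem:fe}. A minor point to check is the identification $v(0,\cdot)=v_0$ together with the fact that the convergence of the initial kinetic energies is \emph{global} in space (not merely local), both of which follow from Lemma~\ref{lem:globalconv} and Definition~\ref{def:VB}.
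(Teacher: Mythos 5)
Your proposal is correct and follows essentially the same route as the paper: the same energy identity with the error term controlled by $\|F_\e\|_{L^2}$ via Lemma \ref{lem:fe}, the global $C([0,T];L^2)$ convergence of Lemma \ref{lem:globalconv} for the conservation statement, and the upgrade of the velocity convergence to $L^\infty((0,T);L^{p^*})$ with $p^*\geq 3$ (from the strong vorticity convergence of Remark \ref{rem:strongo}) to pass to the limit in the cubic and pressure terms for $p\geq 6/5$. The only cosmetic difference is that you derive strong $L^{3/2}$ convergence of the pressure where the paper settles for weak-$*$ convergence, which suffices either way since the velocity converges strongly in $L^3$.
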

\begin{proof}
We divide the proof in two steps.\\
\\
\underline{Step 1} \hspace{0.5cm}\emph{Local balance of the energy.}\\
\\
Since the result for $p\geq 3/2$ is a consequence of Theorem \ref{teo:consweak}, we give the proof under the assumption $6/5\leq p\leq 3/2$. Let $v^\e$ constructed by the vortex-blob method as in the definition of VB-solutions. We have that
\begin{equation}
\label{conv:v}
v^\e\to v \hspace{0.7cm}\mbox{in }L^\infty((0,T);L^q(\R^2)),\hspace{0.5cm}\mbox{for every }2\leq q\leq p^*.
\end{equation}
For $q=2$ the convergence \eqref{conv:v} is a consequence of Lemma \ref{lem:globalconv}, while for $2<q\leq p^*$ it follows from Sobolev inequality and the strong convergence of the vorticity. Indeed, by the Calder\`on-Zygmund theorem we have that
$$
\sup_{t\in[0,T]}\|v^\e(t,\cdot)-v(t,\cdot)\|_{L^{p*}}\leq C\sup_{t\in[0,T]}\|\omega^\e(t,\cdot)-\omega(t,\cdot)\|_{L^p}
$$
and by interpolating the spaces $L^2$ and $L^{p^*}$ the convergence in \eqref{conv:v} holds.\\
The pressure $p^\e$ solves the following equation
$$
-\Delta p^\e=\dive\dive(v^\e\otimes v^\e),
$$
and by elliptic regularity we have that $p^\e\in L^\infty((0,T);L^q(\R^2))$, where $1~\leq ~q \leq ~p^*/2$, with uniform bounds. Therefore there exists a scalar function $p~\in ~L^\infty((0,T);L^1\cap~ L^{\frac{p^*}{2}}~(\R^2))$ such that
\begin{equation}\label{conv:p}
p^\e\weaktos p \hspace{1cm}\mbox{in }L^\infty((0,T);L^q(\R^2)), \hspace{0.5cm}\mbox{for all }1<q\leq \frac{p^*}{2}.
\end{equation}
Let $\phi\in C^\infty_c((0,T)\times\R^2)$ be a test function. Multiplying the equation \eqref{eq:vbv} by $v^\e\phi$ and integrating in space and time we get
\begin{align}
\int_0^t\int_{\R^2}\frac{|v^\e|^2}{2}\partial_s\phi\de x \de s & + \int_0^t\int_{\R^2}v^\e\left(\frac{|v^\e|^2}{2}+p^\e\right)\nabla\phi \de x \de s \label{proof:balancevel}\\
&=-\int_0^t\int_{\R^2} (K*E_\e) v^\e \phi\de x \de s.\label{proof:balanceaerror}
\end{align}
We start by considering the error term in \eqref{proof:balanceaerror}: we have that
\begin{align*}
&\int_0^t\int_{\R^2} (K*E_\e)\cdot v^\e \phi\de x \de s=-\int_0^t\int_{\R^2} E_\e(K\star(v^\e\phi)) \de x \de s \\ &=-\int_0^t\int_{\R^2} (\dive F_\e)(K\star(v^\e\phi)) \de x \de s=\int_0^t\int_{\R^2} F_\e\cdot\nabla(K\star(v^\e\phi)) \de x \de s.
\end{align*}
Then, by H\"older inequality and Calder\`on-Zygmund theorem we have that
\begin{align*}
\begin{vmatrix}
\displaystyle\int_0^t\int_{\R^2} (K*E_\e) v^\e \phi\de x \de s
\end{vmatrix}&\leq T \sup_{t\in[0,T]}\left( \|F_\e(t,\cdot)\|_{L^2}\|\nabla K\star(v^\e\phi)(t,\cdot)\|_{L^2} \right)\\
& \leq CT \sup_{t\in[0,T]}\left( \|F_\e(t,\cdot)\|_{L^2}\|v^\e(t,\cdot)\|_{L^2} \right)\\
&\leq C\delta^{-\frac{7}{3}}\e^{\frac{1}{3}}.
\end{align*}
which goes to $0$ as $\e\to 0$ choosing $\delta,h$ in the construction of the approximation as in Lemma \ref{lem:fe}. We consider now \eqref{proof:balancevel}. By the convergence in \eqref{conv:v} we have that
$$
\int_0^t\int_{\R^2}\frac{|v^\e|^2}{2}\partial_s\phi\de x\to \int_0^t\int_{\R^2}\frac{|v|^2}{2}\partial_s\phi\de x, \hspace{0.5cm}\mbox{as }\e\to 0.
$$
We deal now with the second term in \eqref{proof:balancevel}. It is here that the restriction to $p\geq\frac{6}{5}$ comes into play: in this range the Sobolev exponent $p^*\geq 3$. Then, the convergences in \eqref{conv:v} and \eqref{conv:p} imply that
$$
\int_0^t\int_{\R^2} p^\e v^\e\cdot\nabla\phi \de x \de s\to\int_0^t\int_{\R^2} pv\cdot\nabla\phi \de x \de s, \hspace{0.5cm}\mbox{as }\e\to 0,
$$
and
$$
\int_0^t\int_{\R^2}v^\e \frac{|v^\e|^2}{2}\nabla\phi \de x \de s\to \int_0^t\int_{\R^2}v \frac{|v|^2}{2}\nabla\phi \de x \de s, \hspace{0.5cm}\mbox{as }\e\to 0,
$$
and this concludes the proof of \eqref{eq:balance}.\\
\\
\\
\underline{Step 2} \hspace{0.5cm}\emph{Conservation of the kinetic energy.}\\
\\
We prove now that $v$ is a conservative weak solution for any $p>1$. Multiplying \eqref{eq:vbv} by $v^\e$ and integrating in space and time we have that
\begin{equation}\label{eq:energy}
\int_{\R^2}|v^\e|^2(t,x)\de x =\int_{\R^2}|v^\e|^2(0,x)\de x-\int_0^t\int_{\R^2}(\nabla K\star F_\e)\cdot v^\e\de x.
\end{equation}
For the second term on the right hand side, by Lemma \ref{lem:fe} we have that
\begin{align*}
\begin{vmatrix}
\displaystyle\int_0^t\int_{\R^2}(\nabla K\star F_\e)\cdot v^\e\de x
\end{vmatrix}&\leq \|  \nabla K\star F_\e(s,\cdot)\|_{L^2}\|  v^\e(s,\cdot)\|_{L^2}\\ &\leq \|  F_\e(s,\cdot)\|_{L^2}\|  v^\e(s,\cdot)\|_{L^2}\\ &\leq C\delta^{-\frac{7}{3}}\e^{\frac{1}{3}},
\end{align*}
which goes to $0$ as $\e\to 0$. Then, by the convergence \eqref{conv:glob} and letting $\e\to 0$ in \eqref{eq:energy} we have that 
$$
\int_{\R^2}|v|^2(t,x)\de x =\int_{\R^2}|v_0|^2(x)\de x,
$$
which gives the result.
\end{proof}

\medskip

\subsection*{Concluding remarks}
Note that the previous proof the global convergence of the velocity field in \eqref{conv:glob}, which depends on the strong convergence of the vorticity, allows us to prove the conservation of the energy for $p>1$. In fact, the local balance of the energy \eqref{eq:balance} actually implies the conservation of the $L^2$ norm of $v$ for $p\geq 6/5$ by choosing properly the test functions. For example, we can choose the test function to be $\phi_R(x)=\phi\left(\displaystyle\frac{x}{R}\right)$; letting $\e\to 0$ and then $R\to\infty$ we obtain the result. A suitable modification of our argument allow to prove the convergence \eqref{conv:glob} also for solutions constructed as limit of (ES) and (VV) and this extend the result of \cite{CFLS} to the case of the full plane. This suggests that the three methods are somehow equivalent since, under the same hypothesis, they produce weak solutions that share the properties of being Lagrangian and conservative.

\bigskip
\subsection*{Acknowledgments}
This research has been supported by the ERC Starting Grant 676675 FLIRT. Gennaro Ciampa and Stefano Spirito acknowledge the support of INdAM-GNAMPA.

\bigskip


\begin{thebibliography}{10}

\bibitem{AKFL}\textsc{D. M. Ambrose, J. P. Kelliher, M. C. Lopes Filho, H. J. Nussenzveig Lopes}: \emph{Serfati solutions to the 2D Euler equations on exterior domain.} J. Differential Equations, $\mathbf{259}$, (2015), 4509-4560.

\bibitem{Am} \textsc{L. Ambrosio}: \emph{Transport equation and Cauchy problem for BV vector fields.} Inventiones Mathematicae, $\mathbf{158}$, (2004), 227-260.

\bibitem{Be} \textsc{J. T. Beale}: \emph{The approximation of weak solutions to the 2-D Euler equations by vortex elements.} Multidymensional Hyperbolic Problems and Computations, Edited by J. Glimm and A. Majda, IMA Vol. Math. Appl. $\mathbf{29}$, (1991), 23-37.

\bibitem{BB1} \textsc{A. Bohun, F. Bouchut, G. Crippa}: \emph{Lagrangian flows for vector fields with anisotropic regularity.} Ann. Inst. H. Poincar\`e Anal. Non Lin\`eaire, $\mathbf{33}$, 6 (2016), 1409-1429.

\bibitem{BBC2} \textsc{A. Bohun, F. Bouchut, G. Crippa}: \emph{Lagrangian solutions to the 2D Euler system with $L^1$ vorticity and infinite energy.} Nonlinear Analysis: Theory, Methods \& Applications, $\mathbf{132}$, (2016), 160-172.

\bibitem{BC} \textsc{F. Bouchut, G. Crippa}: \emph{Lagrangian flows for vector fields with gradient given by a singular integral.} J. Hyperbolic Diff. Equ., $\mathbf{10}$, (2013), 235-282.

\bibitem{CFLS} \textsc{A. Cheskidov, M. C. Lopes Filho, H. J. Nussenzveig Lopes, R. Shvydkoy}: \emph{Energy conservation in Two-dimensional incompressible ideal fluids.} Comm. Math. Phys., $\mathbf{348}$, (2016), 129-143.

\bibitem{PhDGC}\textsc{G. Ciampa}: \emph{Ph.D. Thesis}, in preparation.

\bibitem{CDL} \textsc{G. Crippa, C. De Lellis}: \emph{Estimates and regularity results for the DiPerna-Lions flow.} J. Reine Angew. Math., $\mathbf{616}$, (2008), 15-46.

\bibitem{CNSS} \textsc{G. Crippa, C. Nobili, C. Seis, S. Spirito}: \emph{Eulerian and Lagrangian solutions to the continuity and Euler equations with $L^1$ vorticity}. SIAM J. Math. Anal. $\mathbf{49}$ (2017), no. 5, 3973-3998.

\bibitem{CS} \textsc{G. Crippa, S. Spirito}: \emph{Renormalized solutions of the 2D Euler equations.} Comm. Math. Phys., $\mathbf{339}$, (2015), 191-198.

\bibitem{De} \textsc{J.-M. Delort}: \emph{Existence de nappes de tourbillon en dimension deux.} J. Amer. Math. Soc., $\mathbf{4}$, (1991), 553-586.

\bibitem{DPL} \textsc{R. J. DiPerna, P.-L. Lions}: \emph{Ordinary differential equations, transport theory and Sobolev spaces.} Invententiones Mathematicae, $\mathbf{98}$, (1989), 511-547.

\bibitem{DPM} \textsc{R. J. DiPerna, A. Majda}: \emph{Concentrations in regularizations for 2-D incompressible flow.} Comm. Pure Appl. Math., $\mathbf{40}$, (1987), 301-345.

\bibitem{L} \textsc{L. Lichtenstein}: \emph{\"Uber einige Existenzproblem der Hydrodynamik homogener, unzusammendr\"uckbarer, reibungsloser Fl\"ussigkeiten und die Helmholtzschen Wirbels\"atze.}, M. Z., $\mathbf{23}$, (1925), 89-154.

\bibitem{LX} \textsc{J.-G. Liu, Z. Xin}: \emph{Convergence of vortex methods for weak solutions to the 2-D Euler equations with vortex sheets data.} Comm. Pure Appl. Math., $\mathbf{48}$, (1995), 611-628.

\bibitem{FMN} \textsc{M. C. Lopes Filho, A. L. Mazzucato and H. J. Nussenzveig Lopes}:\emph{Weak solutions, renormalized solutions and enstrophy defects in 2D turbulence.} Arch. Ration. Mech. Anal., $\mathbf{179}$, (2006), no. 3, 353-387.

\bibitem{M} \textsc{A. J. Majda}: \emph{Remarks on Weak Solutions for Vortex Sheets with a Distinguished Sign.} Indiana Univ. Math. J., $\mathbf{42}$, (1993), 921-939.

\bibitem{MB} \textsc{A. J. Majda, A. L. Bertozzi}: \emph{Vorticity and incompressible flow}, vol. 27 of \emph{Cambridge Texts in Applied Mathematics}. Cambridge University Press, Cambridge, 2002.

\bibitem{MS3} \textsc{S. Modena, G. Sattig}: \emph{Convex integration solutions to the transport equation with full dimensional concentration.} \url{ https://arxiv.org/abs/1902.08521}

\bibitem{MS} \textsc{S. Modena, L. Sz\`ekelyhidi Jr}: \emph{Non-uniqueness for the transport equation with Sobolev vector fields.} Annals of PDE, $\mathbf{4}$, no. 2, (2018), 18.

\bibitem{MS2} \textsc{S. Modena, L. Sz\`ekelyhidi Jr}: \emph{Non-renormalized solutions to the continuity equation.} \url{ https://arxiv.org/abs/1806.09145}

\bibitem{S} \textsc{P. Serfati}: \emph{Solutions $C^\infty$ en temps, $n$-log Lipschitz born\'ees en espace et \'equation d'Euler.} C. R. Acad. Sci. Paris S\'er. I Math., $\mathbf{320}$, (1995), 555-558.

\bibitem{St} \textsc{E. M. Stein}: \emph{Singular integrals and differentiability properties of functions}. Princeton University Press, 1970.

\bibitem{VW} \textsc{I. Vecchi, S. J. Wu}: \emph{On $L^1$-vorticity for 2-D incompressible flow.} Manuscripta Math. 78, $\mathbf{4}$, (1993), 403-412.

\bibitem{W} \textsc{W. Wolibner}: \emph{Un th\'eor\`eme sur l'existence du mouvement plan d'un fluide parfait, homogene, incompressible, pendant un temps infiniment long}, Math. Z., $\mathbf{37}$, (1933), 698-726.

\bibitem{Y} \textsc{V. I. Yudovi\v{c}}: \emph{Non-stationary flows of an ideal incompressible fluid.} \v{Z}. Vy\v{c}isl. Mat. i Mat. Fiz. 3, (1963), 1032-1066.
\end{thebibliography}
\end{document}